\documentclass{article}
\usepackage[T1]{fontenc}
\usepackage[utf8]{inputenc}
\usepackage{amsmath,amssymb,amsthm}
\usepackage[
a4paper,
left=2.5cm,
right=2.5cm,
top=2.5cm,
bottom=3.0cm
]{geometry}
\usepackage{mathrsfs}
\usepackage{xcolor}
\usepackage{microtype}
\usepackage{tikz}
\usepackage{graphicx}
\usepackage{booktabs}

\definecolor{mycitegreen}{rgb}{0,0.5,0}
\usepackage[
colorlinks=true,
linkcolor=blue,
citecolor=mycitegreen,
urlcolor=blue
]{hyperref}

\usepackage{float}

\usepackage{authblk}

\usepackage{placeins}

\usetikzlibrary{arrows.meta,positioning,fit}

\numberwithin{equation}{section}

\theoremstyle{plain}
\newtheorem{theorem}{Theorem}[section]
\newtheorem{lemma}{Lemma}[section]
\newtheorem{remark}{Remark}[section]
\newtheorem{assumption}{Assumption}[section]

\newcommand{\RR}{\mathbb{R}}

\newcommand{\SSS}{\mathbb{S}}

\newcommand{\supp}{\operatorname{supp}}

\newcommand{\Lip}{\operatorname{Lip}}
\newcommand{\Id}{\operatorname{Id}}

\newcounter{method}
\renewcommand{\themethod}{\Alph{method}}

\newenvironment{method}[1]{%
	\refstepcounter{method}%
	\par\medskip
	\noindent\textbf{Method~\themethod: #1.}\ }%
    {\par\medskip}

\begin{document}
	\title{\textbf{A Unified DeepONet Framework for Logarithmically Stable Infinite-Dimensional Inverse Problems}}
	
	\author[1]{Wen-Jie Wu}
	\author[1,2]{Tiexiang Li\thanks{Corresponding author. E-mail addresses: \texttt{wenjiewu@seu.edu.cn} (W.-J.~Wu), \texttt{txli@seu.edu.cn} (T.~Li), and \texttt{wwlin@outlook.com} (W.-W.~Lin).}}
	\author[2]{Wen-Wei Lin}
	
	\affil[1]{School of Mathematics and Shing-Tung Yau Center, Southeast University, Nanjing 211189, People's Republic of China}
	
	\affil[2]{Shanghai Institute for Mathematics and Interdisciplinary Sciences (SIMIS), Shanghai 200433, People's Republic of China}

	\date{}
	
	\maketitle
	
	\begin{abstract}
		We develop a unified DeepONet framework for logarithmically stable inverse problems between infinite-dimensional function spaces, with inverse acoustic scattering as a model application. The framework is formulated at the operator level by separating the learned inverse map into measurement encoding, finite-dimensional neural approximation, and functional reconstruction components. For inverse maps satisfying a logarithmic stability estimate, we establish quantitative a priori error bounds that separate the encoder, finite-dimensional neural approximation, and reconstruction contributions. For prescribed encoder and reconstruction ranks, we obtain a network-size-dependent bound for the finite-dimensional neural approximation error, together with rank-dependent bounds for the encoding and reconstruction errors. For comparison, we also record the corresponding Lipschitz-stable estimate arising from the same error decomposition. The quantitative inverse-scattering analysis is then specialized, in three dimensions, to the recovery of a medium contrast from fixed-frequency far-field measurements. Numerical experiments in two and three dimensions, using both function-space and finite-dimensional priors, illustrate the reconstruction performance and empirical sensitivity to synthetic measurement noise.
	\end{abstract}
	
	\noindent\textbf{Keywords:} DeepONet; error estimates; infinite-dimensional inverse problems; logarithmic stability; inverse scattering.
	
	

	\section{Introduction}
	Deep operator networks (DeepONets) are neural-network architectures designed to approximate nonlinear operators between function spaces. Their theoretical origin is closely related to the universal approximation theory for continuous functionals and nonlinear operators developed by Chen and Chen \cite{CC93,CC95}. Motivated by this operator approximation viewpoint, Lu, Jin and Karniadakis introduced the DeepONet architecture, which was further developed and systematically demonstrated in subsequent work \cite{LJK19,LJP+21}. In the classical DeepONet architecture, the branch network maps finitely many measurements of the input function to a coefficient vector, whereas the trunk network maps an output query point to a feature vector; their inner product yields the predicted value of the output function at that point. The framework developed below generalizes this construction by allowing abstract measurement encoders and general finite-dimensional reconstructors. Lu et al.~demonstrated the effectiveness of DeepONet for explicit operators, including integrals and fractional Laplacians, as well as implicit solution operators associated with deterministic and stochastic differential equations \cite{LJP+21}. DeepONets, Fourier neural operators, and related architectures are widely studied methods for operator learning, which seeks to approximate nonlinear operators between infinite-dimensional function spaces \cite{KLL+23}.
	
	The approximation properties of DeepONets have been studied in several recent works. Lanthaler, Mishra and Karniadakis established error estimates for DeepONets as a learning framework in infinite dimensions \cite{LMK22}. Deng, Shin, Lu, Zhang and Karniadakis analyzed approximation rates of DeepONets for solution operators arising from advection--diffusion equations \cite{DSL+22}. Marcati and Schwab proved exponential convergence rates of deep operator networks for coefficient-to-solution maps of analytic elliptic PDEs \cite{MS23}. These results support the use of DeepONet as a genuine operator approximation method for maps defined on infinite-dimensional input spaces.
	
	Machine-learning methods for inverse problems include data-driven reconstruction, learned iterative algorithms, and physics-informed formulations; see \cite{AMOS19,AO17,AO18,RPK19}. Since the data in many PDE inverse problems are naturally represented by measurement functions or boundary input--output operators, while the unknowns are coefficient functions, operator learning provides a natural framework. De Hoop, Lassas and Wong studied deep learning architectures for nonlinear operator functions and nonlinear inverse problems \cite{HLW21}. Molinaro, Yang, Engquist and Mishra introduced Neural Inverse Operators (NIOs), based on a suitable composition of DeepONets and Fourier neural operators, for approximating operator-to-function inverse maps arising in PDE inverse problems \cite{MYEM23}. In the context of Calder\'on's problem, Castro, Mu\~noz and Valenzuela proved that the direct and inverse Calder\'on mappings can be rigorously approximated by DeepONets \cite{CMV24}. For electrical impedance tomography, Abhishek and Strauss formulated the recovery of a conductivity from the Neumann-to-Dirichlet operator as an operator-to-function learning problem and studied its approximation by a DeepONet-type architecture \cite{AS26}. For full-waveform inversion, Zhu et al. proposed Fourier-DeepONet, which incorporates source parameters as additional inputs and reconstructs subsurface structures from seismic waveform data across varying source frequencies and locations \cite{ZFLL23}. These works illustrate the potential of DeepONet and related
	neural-operator methods for inverse problems with function- or
	operator-valued data and function-valued unknowns.
	
	Logarithmic stability is a classical feature of many severely ill-posed infinite-dimensional inverse problems. Foundational examples include inverse boundary value problems such as the Calder\'on and Gel'fand--Calder\'on problems, where logarithmic stability estimates and exponential instability results indicate that the inverse map generally has at best a logarithmic modulus of continuity \cite{Ale88,Man01,Nov11}. Logarithmic stability is also central in inverse scattering. Stefanov proved stability estimates of logarithmic type for inverse potential scattering at fixed energy \cite{Ste90}. Isakov obtained logarithmic stability estimates for obstacle inverse scattering from scattering amplitudes \cite{Isa92}. H\"ahner and Hohage established logarithmic stability estimates for inverse acoustic inhomogeneous medium scattering from far-field and near-field data \cite{HH01}. Related stability and regularization results for inverse scattering and exponentially ill-posed inverse medium problems were further developed in \cite{Sin06,HW15}. For broader treatments of ill-posedness, stability, and regularization in inverse PDE and inverse scattering problems, we refer to the monographs \cite{Isa06,CK19}.
	
	By contrast, Lipschitz-type stability is typically available only under additional structural restrictions on the unknown. Examples include piecewise constant conductivities with finitely many unknown values \cite{AV05}, abstract inverse problems including the inverse medium problem for the Helmholtz equation \cite{Bou13}, and finite-measurement inverse problems with finite-dimensional a priori structure \cite{AS22}. In a related neural-network analysis, Lerma Pineda and Petersen \cite{LP23} obtained stable approximation results by restricting an infinite-dimensional ill-posed forward map to
	finite-dimensional subspaces on which the inverse is Lipschitz
	continuous. These results rely on finite-dimensional or structurally restricted regimes. The present work instead propagates the logarithmic stability modulus of the physical inverse map through the DeepONet error decomposition without replacing it by a finite-dimensional Lipschitz inverse.
	
	\subsection{Main contributions}
	
	The existing results described above establish universal approximation properties, problem-specific neural inversion methods, or stable neural-network approximations under finite-dimensional Lipschitz restrictions. To the best of our knowledge, these works do not provide an operator-level estimate that tracks an infinite-dimensional logarithmic stability modulus through measurement encoding, finite-dimensional neural approximation, and output reconstruction.
	
	The main contributions of this work are as follows. Building on the DeepONet error decomposition in \cite{LMK22}, we derive a unified error estimate that separates the encoding, finite-dimensional neural approximation, and reconstruction contributions. Moreover, for fixed encoder and reconstruction dimensions, we establish a network-size-dependent ReLU approximation bound when the physical inverse map satisfies logarithmic stability, and we give the corresponding Lipschitz-stable comparison. Finally, we specialize the abstract framework to three-dimensional fixed-frequency inverse acoustic medium scattering and derive rank-dependent estimates for spectral encoding and polynomial reconstruction, leading to a quantitative DeepONet error bound. Numerical experiments in two and three dimensions illustrate the reconstruction performance for both infinite- and finite-dimensional prior classes.
	
	\subsection{Mathematical setting and DeepONet architecture}
	
	We next introduce the abstract setting and the
	encoder--approximator--reconstructor factorization used in the analysis below.
	Let $X$ and $Y$ be separable Hilbert spaces representing the data and
	reconstruction spaces, respectively, and let $D_G\subset X$ be a closed
	admissible data set. We consider a continuous physical inverse map
	\begin{equation*}
		G:D_G\to Y.
	\end{equation*}
	Our goal is to approximate $G$ on the admissible data by a DeepONet and to
	quantify the contributions of encoding, neural approximation, and
	reconstruction to the total approximation error.
	
	To formulate the target operator on all of $X$, we use the following
	extension theorem.
	
	\begin{lemma}[\cite{Dug51}]\label{lem:dugundji}
		Let $X$ be a metric space, let $A\subset X$ be closed, and let $Y$ be a
		locally convex linear space. Then every continuous map
		\begin{equation*}
			f:A\to Y
		\end{equation*}
		admits a continuous extension
		\begin{equation*}
			F:X\to Y
		\end{equation*}
		such that $F|_A=f$.
	\end{lemma}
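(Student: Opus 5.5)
The plan is to follow Dugundji's original construction: a locally finite partition of unity on $X\setminus A$, subordinate to a cover by balls whose radii are comparable to the distance to $A$, together with an assignment of "anchor" points in $A$. If $A=\emptyset$ there is nothing to prove, so assume $A\neq\emptyset$.

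\textbf{Covering and partition of unity.} For each $x\in X\setminus A$ put $d(x)=\operatorname{dist}(x,A)>0$, and consider the open cover of $X\setminus A$ by the balls $B\bigl(x,\tfrac14 d(x)\bigr)$. Since $X\setminus A$ is a metric space, it is paracompact by Stone's theorem. Hence there is a locally finite open refinement $\{U_\lambda\}_{\lambda\in\Lambda}$ and a continuous partition of unity $\{\varphi_\lambda\}$ subordinate to it. Concretely, one can take
\begin{equation*}
\varphi_\lambda(x)=\frac{\operatorname{dist}(x,X\setminus U_\lambda)}{\sum_\mu \operatorname{dist}(x,X\setminus U_\mu)}.
\end{equation*}
For each $\lambda$, choose $a_\lambda\in A$ with $d(a_\lambda,U_\lambda)<2\operatorname{dist}(U_\lambda,A)$. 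Then define
\begin{equation*}
F=f\ \text{on } A,\qquad F(x)=\sum_\lambda \varphi_\lambda(x)\,f(a_\lambda)\ \text{for } x\in X\setminus A.
\end{equation*}
This is a locally finite sum of continuous $Y$-valued maps, so $F$ is continuous on the open set $X\setminus A$. Moreover $F(x)$ lies in the convex hull of $\{f(a_\lambda):x\in U_\lambda\}$.

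\textbf{Key geometric estimate.} If $x\in U_\lambda\subset B(y,\tfrac14 d(y))$, then every point of $U_\lambda$ is within $\tfrac12 d(y)$ of $x$. Using $d(y)\le \tfrac43 d(x)$, one checks that $d(x,a_\lambda)\le C\,d(x,a)$ for every $a\in A$, with an absolute constant such as $C=6$.

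\textbf{Continuity at boundary points, which is the main obstacle.} This step is where local convexity is essential. Fix $a\in\partial A$ and a convex neighborhood $V$ of $f(a)$; such $V$ exist because $Y$ is locally convex. By continuity of $f$ there is $\delta>0$ with $f(A\cap B(a,\delta))\subset V$. Take $x\notin A$ with $d(x,a)<\delta/(C+1)$. Every $a_\lambda$ with $x\in U_\lambda$ satisfies
\begin{equation*}
d(a_\lambda,a)\le d(a_\lambda,x)+d(x,a)\le (C+1)\,d(x,a)<\delta,
\end{equation*}
so $f(a_\lambda)\in V$. Since $F(x)$ is a convex combination of these values and $V$ is convex, $F(x)\in V$. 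For $x\in A$ near $a$, the inclusion $F(x)=f(x)\in V$ is immediate.

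Hence $F$ is continuous on all of $X$ and satisfies $F|_A=f$.
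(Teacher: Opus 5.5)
Your proposal is correct and is Dugundji's original construction (Stone paracompactness of $X\setminus A$, a locally finite refinement of the balls $B(y,\tfrac14 d(y))$, anchor points $a_\lambda$, and a bound $d(x,a_\lambda)\le C\,d(x,a)$ that feeds a convexity argument at points of $A$); the paper itself gives no proof and simply cites this result from \cite{Dug51}. The one detail worth stating explicitly is that $\operatorname{dist}(U_\lambda,A)\ge \tfrac34 d(y)>0$, since this is what makes the strict choice $d(a_\lambda,U_\lambda)<2\operatorname{dist}(U_\lambda,A)$ possible.
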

	
	Since $Y$ is a Hilbert space and hence locally convex,
	Lemma~\ref{lem:dugundji} yields a continuous extension
	\begin{equation*}
		\mathscr G:X\to Y,
		\qquad
		\mathscr G|_{D_G}=G,
	\end{equation*}
	which serves as the target operator for the DeepONet.
	
	To begin, we define an \textit{encoder}, $\mathcal{E}:X\to\mathbb{R}^{m}$. Next, we define an \textit{approximator} $\mathcal{A}:\mathbb{R}^{m}\to\mathbb{R}^{p}$ for some $p\in\mathbb{N}$, which is a feedforward deep neural network. Precisely, a feedforward deep neural network is described by
	\begin{equation}\label{eq:neural-network}
		\Phi_{\theta}(z)=C_{L}\circ\sigma\circ C_{L-1}\circ\cdots\circ\sigma\circ C_{2}\circ\sigma\circ C_{1}(z),
	\end{equation}
	where $z\in\mathbb{R}^{d_{\mathrm{in}}}$ denotes the input vector, and $\sigma$ is the ReLU activation function, $\sigma(x)=\max\{x,0\}$. For $1\leq\ell\leq L$, $C_{\ell}$ is defined by
	\begin{equation*}
		C_\ell(z_\ell):=W_\ell z_\ell+b_\ell,\quad\text{where}\quad W_{\ell}\in\mathbb{R}^{d_{\ell+1}\times d_{\ell}},\;\;b_{\ell}\in\mathbb{R}^{d_{\ell+1}}.
	\end{equation*}
	Here we also denote $d_{\mathrm{in}}=d_{1}$ and $d_{\mathrm{out}}=d_{L+1}$. The set $\theta=\{\theta_{\ell}\}_{\ell=1}^{L}=\{W_{\ell},b_{\ell}\}_{\ell=1}^{L}\in \Theta\subset\mathbb{R}^{M}$ consists of the learnable parameters, where $M=\sum_{\ell=1}^{L}(d_{\ell+1}d_{\ell}+d_{\ell+1})$. For a matrix or vector $A$, let $\|A\|_0$ denote the number of its
	nonzero scalar entries. We define the size and depth of
	$\Phi_{\theta}$ by
	\begin{equation*}
		\mathrm{size}(\Phi_{\theta})
		:=
		\sum_{\ell=1}^{L}
		\left(
		\|W_{\ell}\|_0+\|b_{\ell}\|_0
		\right),
		\qquad
		\mathrm{depth}(\Phi_{\theta})
		:=
		L-1.
	\end{equation*}
	
	As in the DeepONet architecture, we define the \textit{branch net} as the composition of $\mathcal{A}$ and $\mathcal{E}$, i.e., $\boldsymbol{\beta}=\mathcal{A}\circ\mathcal{E}:X\to\mathbb{R}^{p}$, where $\mathcal{A}$ is given as in \eqref{eq:neural-network}. A \textit{reconstructor} $\mathcal{R}:\mathbb{R}^{p}\to Y$ is defined by
	\begin{equation*}
		\mathcal R(c):=\tau_0+\sum_{k=1}^p c_k\tau_k,
		\qquad c=(c_1,\ldots,c_p)\in\mathbb R^p,
	\end{equation*}
	where $\tau_0,\tau_1,\ldots,\tau_p\in Y$, and $\boldsymbol{\tau}=(\tau_{0},\tau_{1},\ldots,\tau_{p})$ is called a \textit{trunk net}. In the classical neural-trunk realization of DeepONet, each $\tau_k$ may be represented by a feedforward neural network. Other choices of $\mathcal R$ are discussed in Subsection~\ref{subsec:reconstruction}. Putting all components together, we now have a DeepONet:
	
	\begin{equation*}
		\mathscr{N}:X\to Y,\quad\mathscr{N}(f)=(\mathcal{R}\circ\mathcal{A}\circ\mathcal{E})(f).
	\end{equation*}
	
	Here $\mathcal E$, $\mathcal A$, and $\mathcal R$ encode the data, predict reconstruction coefficients, and reconstruct the output, respectively; for $f\in D_G$, $\mathscr N(f)$ is intended to approximate $G(f)=\mathscr G(f)$.
	
	We assume that $\mathcal E:X\to\mathbb R^m$ is Borel measurable. Since $\mathcal A$ is a finite composition of continuous affine maps and the ReLU activation, and $\mathcal R$ is an affine map from $\mathbb R^p$ to $Y$, both $\mathcal A$ and $\mathcal R$ are continuous. Consequently, $\mathscr N=\mathcal R\circ\mathcal A\circ\mathcal E$ is Borel measurable.
	
	To describe the approximation error on the physical data, let
	$\mu$ be a Borel probability measure on $X$ satisfying
	$\mu(D_G)=1$. We assume that $\mu$ has a finite second moment, i.e., $\int_X\|f\|_X^2\,d\mu(f)<\infty$.
	Since $\mathscr G:X\to Y$ is continuous, it is Borel measurable.
	We further assume that $\mathscr G\in L^2(X,\mu;Y)$, or equivalently,
	that $\mathscr G_\#\mu$ has finite second moment in $Y$. We define
	\begin{equation}
		\mathscr{E}:=
		\left(
		\int_X
		\|\mathscr{G}(f)-\mathscr{N}(f)\|_Y^2\,d\mu(f)
		\right)^{1/2}.
	\end{equation}
	
	We now focus on the quantitative error mechanism arising from the three components of the DeepONet: the encoder, the finite-dimensional neural approximator, and the reconstructor. Following the strategy used in \cite{LMK22}, let $\mathcal D:\mathbb R^m\to X$ and $\mathcal P:Y\to\mathbb R^p$ be Borel measurable maps, called the \textit{decoder} and \textit{projector}, respectively, satisfying
	\begin{equation*}
		\mathcal D\circ\mathcal E\approx\Id:X\to X,
		\qquad
		\mathcal R\circ\mathcal P\approx\Id:Y\to Y.
	\end{equation*}
	The maps \(\mathcal D\) and \(\mathcal P\) are not necessarily unique and will be chosen according to the inverse problem under consideration. We have the following diagram for all mappings:
	\begin{center}
		\begin{tikzpicture}[>=stealth]
			\node (X)  at (0,2) {$X$};
			\node (Y)  at (2.4,2) {$Y$};
			\node (Rm) at (0,0) {$\mathbb{R}^m$};
			\node (Rp) at (2.4,0) {$\mathbb{R}^p$};
			
			\draw[->] (X) -- node[above] {$\mathscr{G}$} (Y);
			\draw[->] (Rm) -- node[below] {$\mathcal{A}$} (Rp);
			
			\draw[->, dashed]
			([xshift=-0.35em]X.south) -- node[left] {$\mathcal{E}$}
			([xshift=-0.35em]Rm.north);
			\draw[->]
			([xshift= 0.35em]Rm.north) -- node[right] {$\mathcal{D}$}
			([xshift= 0.35em]X.south);
			
			\draw[->]
			([xshift=-0.35em]Y.south) -- node[left] {$\mathcal{P}$}
			([xshift=-0.35em]Rp.north);
			\draw[->, dashed]
			([xshift= 0.35em]Rp.north) -- node[right] {$\mathcal{R}$}
			([xshift= 0.35em]Y.south);
		\end{tikzpicture}
	\end{center}
	With the choices of \(\mathcal D\) and \(\mathcal P\), we define
	\begin{equation}
		\mathscr{E}_{\mathcal{E}}:=\left(\int_{X}\|\mathcal{D} \circ \mathcal{E}(f)-f\| _{X}^{2} d \mu(f)\right)^{1 / 2},
	\end{equation}
	\begin{equation}
		\mathscr{E}_{\mathcal{A}}:=\left(\int_{\mathbb{R}^{m}}\|\mathcal{A}(x)-\mathcal{P} \circ \mathscr{G} \circ \mathcal{D}(x)\| _{\ell^{2}(\mathbb{R}^{p})}^{2} d\left(\mathcal{E}_{\#} \mu\right)(x)\right)^{1 / 2},
	\end{equation}
	\begin{equation}
		\mathscr{E}_{\mathcal{R}}:=\left(\int_{Y}\|(\mathcal{R} \circ \mathcal{P})(v)-v\| _{Y}^{2} d\left(\mathscr{G}_{\#} \mu\right)(v)\right)^{1 / 2}.
	\end{equation}
	
	The remainder of the paper is organized as follows. Section~\ref{section2} develops an abstract DeepONet framework for logarithmically stable inverse problems. Section~\ref{section3} establishes the corresponding unified error estimate. Section~\ref{section4} specializes the framework to inverse acoustic scattering, derives a quantitative error bound in three dimensions, and presents numerical experiments in two and three dimensions. Section~\ref{section5} concludes the paper.
	
	\section{Abstract DeepONet framework}\label{section2}
	Building on the abstract setup above, this section develops the encoding, neural approximation, and reconstruction components for inverse problems with logarithmic stability.
	
	\begin{assumption}[Logarithmic stability]\label{ass:stability}
		Assume that the inverse map
		\[
		G:D_G\to Y
		\]
		satisfies the following logarithmic stability estimate: there exist constants
		$C_{\mathrm{stab}}>0$, $\beta\in(0,1)$, and $\tau\in(0,1)$ such that
		\[
		\|G(x_1)-G(x_2)\|_Y
		\le
		C_{\mathrm{stab}}\,|\log \|x_1-x_2\|_X|^{-\beta},
		\]
		for all $x_1,x_2\in D_G$ satisfying $\|x_1-x_2\|_X\le \tau$. Here and below, we use the continuous extension convention $|\log 0|^{-\alpha}:=0$ for every $\alpha>0$.
	\end{assumption}
	
	\subsection{Encoding of measurement data}
	
	Let $X_m\subset X$ be an $m$-dimensional subspace, and let $\Pi_m^X:X\to X_m$ be a rank-$m$ approximation operator. Fix a basis $\{\psi_j^{(m)}\}_{j=1}^m$ of $X_m$. We define
	\begin{equation*}
		\mathcal E_m:X\to\RR^m,
		\qquad
		\mathcal E_m(x):=\bigl(c_1(x),\dots,c_m(x)\bigr),
	\end{equation*}
	where
	\begin{equation*}
		\Pi_m^X x=\sum_{j=1}^m c_j(x)\psi_j^{(m)},
	\end{equation*}
	and define
	\begin{equation*}
		\mathcal D_m:\RR^m\to X,
		\qquad
		\mathcal D_m(c):=\sum_{j=1}^m c_j\psi_j^{(m)}.
	\end{equation*}
	We assume throughout that $\mathcal E_m:X\to\RR^m$ is Borel measurable, so that the pushforward measure $(\mathcal E_m)_\#\mu$ is well defined on $\RR^m$.
	Then, by construction,
	\begin{equation*}
		\mathcal D_m\circ \mathcal E_m=\Pi_m^X.
	\end{equation*}
	
	\begin{theorem}[Abstract encoder principle]
		\label{thm:encoder-principle}
		For the encoder--decoder pair induced by $\Pi_m^X$, one has
		\begin{equation*}
			\mathscr E_{\mathcal E}
			=
			\left(
			\int_X \|x-\Pi_m^X x\|_X^2\,d\mu(x)
			\right)^{1/2}.
		\end{equation*}
		In particular, if
		\begin{equation*}
			\int_X \|x-\Pi_m^X x\|_X^2\,d\mu(x)\le \varepsilon_E(m)^2,
		\end{equation*}
		then
		\begin{equation*}
			\mathscr E_{\mathcal E}\le \varepsilon_E(m).
		\end{equation*}
	\end{theorem}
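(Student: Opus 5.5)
This is essentially immediate from the definitions, and the plan is to substitute the identity $\mathcal D_m\circ\mathcal E_m=\Pi_m^X$ into the definition of $\mathscr E_{\mathcal E}$. I will first record that the identity holds pointwise. For every $x\in X$, we have $\mathcal E_m(x)=(c_1(x),\dots,c_m(x))$, and hence
\[
\mathcal D_m(\mathcal E_m(x))=\sum_{j=1}^m c_j(x)\psi_j^{(m)}=\Pi_m^X x.
\]
The coefficients $c_j(x)$ are uniquely determined because $\{\psi_j^{(m)}\}_{j=1}^m$ is a basis of $X_m$ and $\Pi_m^X x\in X_m$. So the encoder is well defined as a map, and the identity holds without exception.

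Next I will check measurability, so that the integral makes sense. By the standing assumption, $\mathcal E_m$ is Borel measurable. The decoder $\mathcal D_m$ is linear from the finite-dimensional space $\RR^m$ into $X$, so it is continuous. Therefore $x\mapsto \|\mathcal D_m\circ\mathcal E_m(x)-x\|_X^2=\|x-\Pi_m^X x\|_X^2$ is a nonnegative Borel function. Its integral against $\mu$ is then well defined in $[0,\infty]$, and we never need to assume $\Pi_m^X$ is linear or bounded.

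Substituting this into the definition of $\mathscr E_{\mathcal E}$ with $\mathcal E=\mathcal E_m$ and $\mathcal D=\mathcal D_m$ gives
\[
\mathscr E_{\mathcal E}=\Bigl(\int_X\|\Pi_m^X x-x\|_X^2\,d\mu(x)\Bigr)^{1/2},
\]
which is the claimed equality. For the second claim, the hypothesis bounds the integrand's integral by $\varepsilon_E(m)^2$. Since $t\mapsto t^{1/2}$ is monotone on $[0,\infty)$, it follows that $\mathscr E_{\mathcal E}\le\varepsilon_E(m)$.

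There is no real obstacle here. The only point requiring care is the measurability of the integrand, which I handle as above through the assumed Borel measurability of $\mathcal E_m$ and the continuity of $\mathcal D_m$. Finiteness is not needed for the equality, since both sides are interpreted in $[0,\infty]$.
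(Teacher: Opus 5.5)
Your proposal is correct and follows the paper's proof: you substitute the pointwise identity $\mathcal D_m\circ\mathcal E_m=\Pi_m^X$ into the definition of $\mathscr E_{\mathcal E}$. Your additional remarks on the measurability of the integrand and on the monotonicity of $t\mapsto t^{1/2}$ are sound details that the paper leaves implicit.
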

	
	\begin{proof}
		Since $\mathcal D_m\circ \mathcal E_m=\Pi_m^X$,
		\begin{equation*}
			\mathscr E_{\mathcal E}
			=
			\left(
			\int_X \|\mathcal D_m\circ \mathcal E_m(x)-x\|_X^2\,d\mu(x)
			\right)^{1/2}
			=
			\left(
			\int_X \|\Pi_m^X x-x\|_X^2\,d\mu(x)
			\right)^{1/2}.
		\end{equation*}
	\end{proof}
	
	We next describe several concrete encoder constructions covered by this abstract principle.
	\begin{method}{Problem-adapted spectral encoding}
		Assume that $X$ admits a distinguished orthonormal basis $\{\psi_j\}_{j\ge1}$. Define
		\begin{equation*}
			\Pi_m^X x:=\sum_{j=1}^m \langle x,\psi_j\rangle_X\,\psi_j,
		\end{equation*}
		\begin{equation*}
			\mathcal E_m(x):=\bigl(\langle x,\psi_1\rangle_X,\dots,\langle x,\psi_m\rangle_X\bigr),
			\qquad
			\mathcal D_m(c):=\sum_{j=1}^m c_j\psi_j.
		\end{equation*}
		Then
		\begin{equation*}
			\mathcal D_m\circ \mathcal E_m=\Pi_m^X.
		\end{equation*}
		This includes Fourier, spherical-harmonic, and other
		geometry-adapted spectral coefficient encoders. 
	\end{method}

	\begin{method}{Covariance-spectral encoding}
		Assume that $\mu$ has finite second moment, and define its mean by
		\begin{equation*}
			\bar x_\mu:=\int_X x\,d\mu(x).
		\end{equation*}
		Let $\Gamma_\mu$ be the covariance operator of $\mu$, with eigenpairs $(\lambda_j,e_j)$. Define
		\begin{equation*}
			\Pi_m^X x
			=
			\sum_{j=1}^m \langle x-\bar x_\mu,e_j\rangle_X\,e_j+\bar x_\mu,
		\end{equation*}
		\begin{equation*}
			\mathcal E_m(x):=\bigl(\langle x-\bar x_\mu,e_1\rangle_X,\dots,\langle x-\bar x_\mu,e_m\rangle_X\bigr),
			\qquad
			\mathcal D_m(c):=\sum_{j=1}^m c_j e_j+\bar x_\mu.
		\end{equation*}
		Then
		$
			\mathcal D_m\circ \mathcal E_m=\Pi_m^X,
		$
		and
		\begin{equation*}
			\int_X \|x-\Pi_m^X x\|_X^2\,d\mu(x)=\sum_{j>m}\lambda_j.
		\end{equation*}
		Hence
		\begin{equation*}
			\mathscr E_{\mathcal E}^2=\sum_{j>m}\lambda_j.
		\end{equation*}
		This is the covariance-spectral, or Karhunen--Lo\`eve/PCA, encoder underlying the optimal linear encoding principle in \cite{LMK22}. This is the affine counterpart of Theorem~\ref{thm:encoder-principle}, obtained by applying the preceding construction to the centered variable $x-\bar x_\mu$.
	\end{method}

	\begin{method}{Sensor-based encoding}
		Let $\ell_1,\dots,\ell_m\in X^\ast$ be bounded linear functionals, and define
		\begin{equation*}
			\mathcal E_m(x):=\bigl(\ell_1(x),\dots,\ell_m(x)\bigr).
		\end{equation*}
		Given a Borel measurable reference finite-dimensional approximation map $\Pi_m^X:X\to X$, a Borel measurable decoder $\mathcal D_m:\mathbb R^m\to X$ is chosen so that
		$\mathcal D_m\circ\mathcal E_m$ approximates $\Pi_m^X$ on the relevant data distribution. Thus,
		\begin{equation*}
			\mathscr E_{\mathcal E}
			\le
			\left(
			\int_X \|x-\Pi_m^X x\|_X^2\,d\mu(x)
			\right)^{1/2}
			+
			\left(
			\int_X \|\Pi_m^X x-\mathcal D_m\circ \mathcal E_m(x)\|_X^2\,d\mu(x)
			\right)^{1/2}.
		\end{equation*}
		Boundary probes and electrode measurements fall into this class whenever they define bounded linear functionals on $X$. For point-evaluation encoders, the encoder may instead be understood through a measurable extension when point evaluations are not well defined on $X$.
	\end{method}
	
	\subsection{Approximation of the inverse map}
	We next derive ReLU approximation error estimates for the inverse map under logarithmic stability and compare them with the corresponding Lipschitz-stable case.
	\begin{theorem}[Approximation error under logarithmic stability]\label{thm:abstract-approximation}
		Fix $m,p\in\mathbb N$. Suppose Assumption~\ref{ass:stability} holds. Let $\mathcal D:\mathbb R^m\to X$ and $\mathcal P:Y\to\mathbb R^p$ be Lipschitz continuous maps, and let $K:=\supp(\mathcal E_\#\mu)\subset \mathbb R^m$ be compact. Assume that $\mathcal D(K)\subset D_G$, and define $H:=\mathcal P\circ G\circ\mathcal D:K\to \mathbb R^p$.
		Then for every $\eta\in(0,1)$ there exists a ReLU neural network $\mathcal A_\eta:\mathbb R^m\to \mathbb R^p$
		such that
		\begin{equation*}
			\mathscr{E}_{\mathcal{A}}(\mathcal A_\eta)
			:=\|\mathcal A_\eta - H\|_{L^{2}(K,\,\mathcal{E}_{\#}\mu)}
			\le\eta,
		\end{equation*}
		and
		\begin{equation*}
			\mathrm{size}(\mathcal A_\eta)
			\le c_0(p+1)\exp \bigl(c_1m\eta^{-1/\beta}\bigr),
			\qquad
			\mathrm{depth}(\mathcal A_\eta)
			\le c_2+c_3m\eta^{-1/\beta}.
		\end{equation*}
		The constants $c_0,c_1,c_2,c_3>0$ may depend on the prescribed ranks $m,p$ and on the fixed data in the assumptions, but are independent of $\eta$ and $N$.
		For $N\in\mathbb N$, define
		\begin{equation*}
			\mathscr E_{\mathcal A}(N)
			:=
			\inf_{\substack{\mathcal A:\mathbb R^m\to\mathbb R^p\ \text{is a ReLU network}\\
			\mathrm{size}(\mathcal A)\le N}}
			\|\mathcal A-H\|_{L^2(K,\mathcal E_\#\mu)}.
		\end{equation*}
		Set
		\begin{equation*}
			N_0:=c_0(p+1)\exp(c_1m).
		\end{equation*}
		Then, for every $N> N_0$,
		\begin{equation*}
			\mathscr E_{\mathcal A}(N)
			\le
			\left[
			\frac{1}{c_1m}
			\log\!\left(\frac{N}{c_0(p+1)}\right)
			\right]^{-\beta}.
		\end{equation*}
	\end{theorem}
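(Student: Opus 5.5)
Our plan is to transfer the logarithmic modulus of continuity of $G$ to $H=\mathcal P\circ G\circ\mathcal D$ on the compact set $K$, and then invoke a standard ReLU approximation result for uniformly continuous functions on a cube, given in terms of a modulus of continuity. First, let $L_{\mathcal D},L_{\mathcal P}$ be the Lipschitz constants. For $x_1,x_2\in K$ with $L_{\mathcal D}|x_1-x_2|\le\tau$, Assumption~\ref{ass:stability} applies, since $\mathcal D(K)\subset D_G$, and gives
\[
|H(x_1)-H(x_2)|\le L_{\mathcal P}C_{\mathrm{stab}}\,|\log(L_{\mathcal D}|x_1-x_2|)|^{-\beta}.
\]
For larger distances, $H$ is bounded on $K$ by continuity and compactness. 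Enlarging the constant, we obtain a global modulus of continuity $\omega(t)\le C_H|\log t|^{-\beta}$ for $t\le t_0$, with $\omega$ bounded. We then extend each component of $H$ from $K$ to a cube $Q\supset K$ with the same modulus, up to a factor. The McShane-type extension $\tilde H_i(x)=\inf_{y\in K}(H_i(y)+\omega(|x-y|))$ works after replacing $\omega$ by its least concave majorant, which is still of order $|\log t|^{-\beta}$ near $0$.

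Next, we approximate in the sup norm on $Q$, which controls the $L^2(K,\mathcal E_\#\mu)$ error because $\mathcal E_\#\mu$ is a probability measure. We use the piecewise-linear interpolation construction of Yarotsky, or equivalently the modulus-of-continuity result of Shen--Yang--Zhang. On a uniform grid of mesh $h$, continuous piecewise-linear interpolation of each component has error $\lesssim\omega(\sqrt m\,h)$. It is realized exactly by a ReLU network of size $\lesssim C(m)\,h^{-m}$ per component and depth $\lesssim C(m)$, or $\lesssim \log(1/h)$ depending on the construction. Stacking $p$ components in parallel gives size $\lesssim (p+1)C(m)h^{-m}$; the extra term accounts for shared layers and the output.

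To get error $\eta$, we require $C_H|\log(\sqrt m h)|^{-\beta}\le\eta$, i.e. $h=m^{-1/2}\exp(-(C_H/\eta)^{1/\beta})$. Then $h^{-m}=m^{m/2}\exp(mC_H^{1/\beta}\eta^{-1/\beta})$, which yields the stated form $\mathrm{size}\le c_0(p+1)\exp(c_1m\eta^{-1/\beta})$. The depth is $\log(1/h)\lesssim c_2+c_3 m\eta^{-1/\beta}$, where the generous dependence on $m$ is absorbed into the constants. The restriction $\eta<1$ ensures $h\le t_0$, after adjusting constants.

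Finally, the rate in $N$ follows by inverting this bound. For $N>N_0$, set
\[
\eta:=\Bigl[\tfrac{1}{c_1m}\log\bigl(N/(c_0(p+1))\bigr)\Bigr]^{-\beta},
\]
which lies in $(0,1)$ precisely because $N>N_0$. Then $c_0(p+1)\exp(c_1m\eta^{-1/\beta})=N$, so $\mathcal A_\eta$ is admissible in the infimum and $\mathscr E_{\mathcal A}(N)\le\eta$.

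The main obstacle we expect is the extension and modulus bookkeeping. Specifically, we must ensure that the logarithmic modulus, which holds only for small distances and only on $K$, yields a global concave modulus on a cube with constants independent of $\eta$. Everything else is quoting the modulus-based ReLU approximation theorem and inverting the resulting size bound.
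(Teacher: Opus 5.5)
Your proof is correct, but it takes a different route from the paper's.

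\textbf{The paper's route.} The paper never extends $H$ and never aims for uniform approximation. It replaces $H$ by the piecewise-constant function $H_J(x)=H(x_j)$ on the cells of a shifted grid that meet $K$, with $x_j\in Q_j\cap K$, so only values of $H$ on $K$ are used. It then approximates this discontinuous function only in $L^2(K,\mathcal E_\#\mu)$, via Lemma~\ref{lem:PC-J}. That lemma uses soft indicators whose transition layers are chosen to have small $\mathcal E_\#\mu$-measure; this is why the cell boundaries must be $\mathcal E_\#\mu$-null. It also uses Yarotsky's approximate multiplication for the $m$-fold products, which produces the $\log(J/\eta)$ factors in the size and depth bounds.

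\textbf{Your route.} You approximate in $L^\infty$ by continuous piecewise-linear interpolation on a triangulated grid. The nodal hat functions there are exactly ReLU-representable, with size and depth depending only on $m$. This gives a stronger sup-norm bound that does not depend on the measure. It needs no null-boundary shift and no approximate products, and its depth is even bounded independently of $\eta$. The price is the extension and concave-majorant bookkeeping, plus citing the exact-representation result.

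\textbf{Your flagged obstacle.} The bookkeeping you flagged goes through. For $L_{\mathcal D}t<e^{-(1+\beta)}$, the function $t\mapsto|\log(L_{\mathcal D}t)|^{-\beta}$ is increasing and concave. A large multiple of it, continued linearly beyond a small $t_1$, is therefore a concave, hence subadditive, majorant of the modulus of $H$ on $K$, of the same logarithmic order. The McShane extension with this majorant then has the required modulus.

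\textbf{A simplification.} You can drop the extension entirely. Assign to each grid vertex $v$ the value $H(y_v)$, where $y_v\in K$ is a nearest point of $K$ to $v$. The interpolant stays continuous, and since $|x-y_v|\le 2\sqrt m\,h$ for $x\in K$ in a simplex with vertex $v$, the error only changes from $\omega(\sqrt m\,h)$ to $\omega(2\sqrt m\,h)$.

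\textbf{What the paper's route buys.} Its Lemma~\ref{lem:PC-J} is self-contained and is reused for the Lipschitz comparison and the scattering estimate.
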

	
	\begin{proof}
		The logarithmic stability of $G$ implies that for all $f_1,f_2\in D_G$ with $\|f_1-f_2\|_X\le \tau$,
		\begin{equation*}
			\|G(f_1)-G(f_2)\|_Y
			\;\le\;
			C_{\mathrm{stab}}\,
			\omega\!\left(
			\|f_1-f_2\|_X
			\right),
			\qquad 
			\omega(t)=|\log t|^{-\beta}.
		\end{equation*}
		Let \(L_{\mathcal D}\) and \(L_{\mathcal P}\) denote the Lipschitz constants of \(\mathcal D\) and \(\mathcal P\), respectively.
		\begin{equation*}
			\|\mathcal D(x_1)-\mathcal D(x_2)\|_X
			\;\le\;
			L_\mathcal D\,\|x_1-x_2\|_{\mathbb R^{m}},
		\end{equation*}
		and therefore,
		\begin{equation*}
			\|H(x_1)-H(x_2)\|_{\mathbb R^{p}}
			\;\le\;
			L_\mathcal P C_{\mathrm{stab}}\,
			\omega\!\left(L_\mathcal D\,\|x_1-x_2\|_{\mathbb R^{m}}\right).
		\end{equation*}
		Setting 
		\begin{equation*}
			C_H = L_\mathcal P C_{\mathrm{stab}}, \qquad c_H = L_\mathcal D,
		\end{equation*}
		we rewrite this as the log-type modulus of continuity
		\begin{equation}\label{eq:log-modulus-final}
			\|H(x_1)-H(x_2)\|_{\mathbb R^{p}}
			\;\le\;
			C_H\,
			\Bigl[-\log\!\bigl(c_H\|x_1-x_2\|_{\mathbb R^{m}}\bigr)\Bigr]^{-\beta},
			\; x_1,x_2\in K ,
			c_H\|x_1-x_2\|_{\mathbb R^m}\le \tau.
		\end{equation}
		Because $K$ is compact, it is contained in a rectangle $[a,b]^m$ of diameter $R$.
		Choose a shifted uniform partition of $[a,b]^m$ into $n$ uniform intervals per coordinate such that every cell boundary has $(\mathcal E_\#\mu)$-measure zero. Let the mesh width
		$h\sim R/n$, producing $J=n^{m}$ cells $\{Q_j\}_{j=1}^{J}$, where $J$ denotes the total number of partition cells.  
		Let $\mathcal J_K:=\{j:Q_j\cap K\neq\emptyset\}$. Then
		$|\mathcal J_K|\le J$. For each $j\in\mathcal J_K$, choose
		$x_j\in Q_j\cap K$. Using a fixed tie-breaking rule on shared boundaries, define the piecewise-constant function
		\begin{equation*}
			H_J(x) := H(x_j), \qquad x\in Q_j\cap K.
		\end{equation*}
		Whenever $x\in Q_j\cap K$, $\|x-x_j\|_{\mathbb R^{m}}\le \sqrt{m} h$. For $h$ small enough so that $c_H \sqrt{m} h\le \min\{\tau,e^{-1}\}$, we may apply the logarithmic stability estimate and obtain by 
		\eqref{eq:log-modulus-final},
		\begin{equation*}
			\|H(x)-H_J(x)\|_{\mathbb R^{p}}
			\;\le\;
			C_H\,\Bigl[-\log\!\bigl(c_H \sqrt{m} h\bigr)\Bigr]^{-\beta}.
		\end{equation*}
		To guarantee
		\begin{equation*}
			\|H - H_J\|_{L^\infty(K)} \le \eta/2,
		\end{equation*}
		it suffices to choose $h$ so that
		\begin{equation*}
			C_H\,\Bigl[-\log\!\bigl(c_H \sqrt{m} h\bigr)\Bigr]^{-\beta} \le \eta/2,
		\end{equation*}
		which is achieved by choosing
		\begin{equation*}
			h \;\asymp\; \exp \bigl(-c_h\,\eta^{-1/\beta}\bigr),
			\qquad
			J=n^{m}\;\lesssim\;\exp \bigl(C\,m\,\eta^{-1/\beta}\bigr).
		\end{equation*}
		
		Next we approximate $H_J$ by a ReLU network.  
		By Lemma~\ref{lem:PC-J} below, applied with tolerance $\eta/2$, there exists a ReLU
		network $\mathcal A_J:\mathbb{R}^{m}\to\mathbb{R}^{p}$ such that
		\begin{equation*}
			\|\mathcal A_J - H_J\|_{L^{2}\!\bigl(K,\,\mathcal{E}_{\#}\mu\bigr)} \le \eta/2,
		\end{equation*}
		and
		\begin{equation*}
			\mathrm{size}(\mathcal A_J)
			\le
			C_*J\left(1+\log\frac{J}{\eta}\right)+pJ,
			\qquad
			\mathrm{depth}(\mathcal A_J)
			\le
			C_*\left(1+\log\frac{J}{\eta}\right).
		\end{equation*}
		With this choice of $h$, after enlarging the constants we have
		\begin{equation*}
			J\le \exp \bigl(Cm\eta^{-1/\beta}\bigr)
		\end{equation*}
		and hence
		\begin{equation*}
			\log\frac{J}{\eta}
			\le C'\left(1+m\eta^{-1/\beta}\right).
		\end{equation*}
		Therefore the contribution of the $J$ parallel indicator subnetworks and the final $p$-dimensional affine output layer satisfies
		\begin{equation*}
			\mathrm{size}(\mathcal A_J)
			\le c_0(p+1)\exp \bigl(c_1m\eta^{-1/\beta}\bigr),
			\qquad
			\mathrm{depth}(\mathcal A_J)
			\le c_2+c_3m\eta^{-1/\beta}.
		\end{equation*}
		Set $\mathcal A_{\eta}:=\mathcal A_J$. Then, by the triangle
		inequality,
		\begin{equation*}
			\|\mathcal A_{\eta} - H\|_{L^{2}\!\bigl(K,\,\mathcal{E}_{\#}\mu\bigr)}
			\;\le\;
			\|\mathcal A_{\eta} - H_J\|_{L^{2}\!\bigl(K,\,\mathcal{E}_{\#}\mu\bigr)}
			+
			\|H_J - H\|_{L^{2}\!\bigl(K,\,\mathcal{E}_{\#}\mu\bigr)}
			\;\le\;
			\eta/2 + \|H_J - H\|_{L^{\infty}(K)}
			\;\le\;
			\eta.
		\end{equation*}
		Therefore,
		\begin{equation*}
			\mathscr{E}_{\mathcal{A}}(\mathcal A_\eta)
			=
			\|\mathcal A_\eta - H\|_{L^{2}\!\bigl(K,\,\mathcal{E}_{\#}\mu\bigr)}
			\;\le\;
			\eta.
		\end{equation*}
		
		For $N> N_0$, define
		\begin{equation*}
			\eta_N
			:=
			\left[
			\frac{1}{c_1m}
			\log\!\left(\frac{N}{c_0(p+1)}\right)
			\right]^{-\beta}.
		\end{equation*}
		Then $\eta_N\in(0,1)$ and
		\begin{equation*}
			\mathrm{size}(\mathcal A_{\eta_N})
			\le
			c_0(p+1)\exp \bigl(c_1m\eta_N^{-1/\beta}\bigr)
			=N.
		\end{equation*}
		Consequently,
		\begin{equation*}
			\mathscr E_{\mathcal A}(N)
			\le
			\|\mathcal A_{\eta_N}-H\|_{L^2(K,\mathcal E_\#\mu)}
			\le \eta_N,
		\end{equation*}
		which proves the stated budget-dependent estimate and completes the proof.
	\end{proof}
	
	The preceding proof relies on the following auxiliary ReLU approximation result for piecewise-constant functions.
	\begin{lemma}[ReLU approximation of piecewise-constant functions]
		\label{lem:PC-J}
		Let $K\subset\mathbb{R}^m$ be compact and let $\nu$ be a finite Borel measure on $K$.
		Let $\{Q_j\}_{j=1}^{J}$ be axis-aligned cubes of side length $h$ with pairwise disjoint interiors whose union contains $K$, and for each $j$ choose $c_j\in\mathbb{R}^p$.
		Using a fixed tie-breaking rule on shared boundaries, define the piecewise-constant function
		\begin{equation*}
			H_J(x)=c_j,\qquad x\in Q_j\cap K .
		\end{equation*}
		Assume $M:=\max_j |c_j|<\infty$ and that
		$\nu(\partial Q_j\cap K)=0$, $j=1,\dots,J$.
		Then for every $\varepsilon\in(0,1)$ there exists a ReLU neural network $\mathcal A_J:\mathbb{R}^m\to\mathbb{R}^p$ such that
		\begin{equation*}
			\|\mathcal A_J-H_J\|_{L^2(K,\nu)}\le \varepsilon,
		\end{equation*}
		and
		\begin{equation*}
			\mathrm{size}(\mathcal A_J)
			\le C_*J\left(1+\log\frac{J}{\varepsilon}\right)+pJ,
			\qquad
			\mathrm{depth}(\mathcal A_J)
			\le C_*\left(1+\log\frac{J}{\varepsilon}\right),
		\end{equation*}
		where $C_*$ may depend on $m,K,\nu,M$, but is independent of $J$ and $\varepsilon$.
	\end{lemma}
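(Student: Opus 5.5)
The construction approximates each cube indicator $\mathbf 1_{Q_j}$ by a continuous piecewise-linear ``soft indicator'' that ReLU networks represent exactly. The vectors $c_j$ then enter through a final linear layer. The only error comes from thin boundary layers around each cube, and the hypothesis $\nu(\partial Q_j\cap K)=0$ makes the measure of these layers small.

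First, in one dimension, write $Q_j=\prod_{i=1}^m[a_i,a_i+h]$. For a width $\delta\in(0,h/2)$, let $\phi_{a,\delta}$ be the trapezoid that vanishes outside $[a,a+h]$, equals $1$ on $[a+\delta,a+h-\delta]$, and is linear in between. It is a combination of four ReLUs,
\begin{equation*}
\phi_{a,\delta}(t)=\tfrac1\delta\bigl(\sigma(t-a)-\sigma(t-a-\delta)-\sigma(t-a-h+\delta)+\sigma(t-a-h)\bigr),
\end{equation*}
so it costs $O(1)$ parameters and one hidden layer.

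Second, I need the product $\prod_i\phi_{a_i,\delta}(x_i)$ of $m$ values in $[0,1]$. I avoid approximate multiplication and use the exact surrogate
\begin{equation*}
\chi_j(x)=\sigma\Bigl(\sum_{i=1}^m\phi_{a_i,\delta}(x_i)-(m-1)\Bigr),
\end{equation*}
which takes values in $[0,1]$, equals $1$ on the $\delta$-shrunk cube, and vanishes outside $Q_j$. Each $\chi_j$ is an exact ReLU network of size $O(m)$ and depth $2$.

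Third, set $\mathcal A_J(x)=\sum_j c_j\chi_j(x)$; the output layer contributes at most $pJ$ nonzero weights. The approximation $\mathcal A_J$ differs from $H_J$ only on the union $B_\delta$ of the $\delta$-neighbourhoods of the faces $\partial Q_j$. Inside the $\delta$-shrunk cubes, exactly one $\chi_j=1$ and the others vanish, since the $Q_j$ have disjoint interiors. Every point of $K$ lies in at most $2^m$ of the $Q_j$, so on $B_\delta$ the pointwise error is at most $(2^m+1)M$, and
\begin{equation*}
\|\mathcal A_J-H_J\|_{L^2(K,\nu)}^2\le (2^m+1)^2M^2\,\nu(B_\delta\cap K).
\end{equation*}
The closed sets $B_\delta\cap K$ decrease, as $\delta\downarrow0$, to $\bigcup_j\partial Q_j\cap K$, which is $\nu$-null by hypothesis. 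By continuity of the finite measure $\nu$ from above, some $\delta$ makes the error at most $\varepsilon$.

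With this construction the size is $O(mJ)+pJ$ and the depth is $O(1)$, which is stronger than the stated bounds. The required inequalities then hold with $C_*$ depending only on $m$, and the logarithmic factor is harmless slack.

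The main obstacle is that continuity from above gives no explicit relation between $\delta$ and $\varepsilon$. This does not affect the network size, because the parameter count is independent of $\delta$; only the weights grow, like $1/\delta$. So the stated estimate follows without quantifying $\delta(\varepsilon)$.

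If the authors intend $C_*$ to be independent of the particular partition, I would add a quantitative step. One route is a boundary-layer assumption of the form $\nu(B_\delta)\lesssim J\delta$. A cleaner alternative is to bound weight magnitudes through depth, implementing the slope $1/\delta$ by repeated doubling layers. This costs about $\log(J/\varepsilon)$ layers and explains the $\log(J/\varepsilon)$ terms in the lemma.
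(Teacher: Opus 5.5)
Your argument is correct, but it follows a genuinely different route from the paper. The paper forms the cube indicator as the product $\prod_k\chi^{(\theta)}_{[a_{jk},b_{jk}]}(x_k)$ and realizes this product only approximately, using Yarotsky's multiplication networks. Because the $J$ multiplication errors are summed, the accuracy must be $\delta_{\mathrm{mul}}\asymp\varepsilon/(MJ\nu(K)^{1/2})$. This is precisely where the factor $1+\log(J/\varepsilon)$ in size and depth comes from, and why the paper's $C_*$ depends on $M$ and $\nu(K)$. You observe that the error analysis only uses three properties of the soft indicator: values in $[0,1]$, value $1$ on the $\delta$-shrunk cube, and value $0$ outside $Q_j$. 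The function $\sigma\bigl(\sum_i\phi_{a_i,\delta}(x_i)-(m-1)\bigr)$ has all three and is an exact ReLU network. This removes the multiplication error and leaves only the boundary-layer term, which you treat by continuity from above exactly as the paper does. You obtain size at most $(12m+1)J+pJ$ and depth $2$, with a constant depending only on $m$. This is strictly sharper than the stated bounds and implies them; the paper's product construction offers no advantage for this lemma. Two small remarks. First, each $\chi_j$ vanishes on $\partial Q_j$ and the interiors are disjoint, so at most one $\chi_j$ is nonzero at any point, and your pointwise bound $(2^m+1)M$ can be replaced by $2M$. Second, your closing paragraph is unnecessary and slightly off. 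Size and depth do not see the weight magnitude $1/\delta$, and your $C_*$ is already independent of the partition. Moreover, realizing the slope $1/\delta$ by doubling layers would cost about $\log(1/\delta)$ layers, which is not controlled by $J/\varepsilon$ without an extra quantitative hypothesis on $\nu$ near the faces. It therefore does not explain the lemma's logarithmic factor, which in the paper comes from the multiplication accuracy.
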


	\begin{proof}
		The construction begins by introducing a softened version of interval 
		indicators.  
		Let $\sigma(t)=\max\{t,0\}$ denote the ReLU activation and define
		\begin{equation*}
			\rho(t)=\sigma(t)-\sigma(t-1),
		\end{equation*}
		which is a continuous piecewise-linear function equal to $0$ for $t\le 0$
		and equal to $1$ for $t\ge 1$.  
		Hence $\rho$ can be implemented exactly by a fixed ReLU subnetwork of constant
		size and depth.
		
		\begin{figure}[h]
			\centering
			\begin{tikzpicture}[scale=1.1]
				\draw[->] (-0.3,0) -- (5.0,0) node[right] {$t$};
				\draw[->] (0,-0.2) -- (0,1.5);
				\coordinate (A) at (1,0);
				\coordinate (Ap) at (1.6,0);
				\coordinate (Bm) at (3.4,0);
				\coordinate (B) at (4,0);
				\draw[dashed] (A) -- (1,1.2);
				\draw[dashed] (B) -- (4,1.2);
				\node[below] at (A) {$a$};
				\node[below] at (B) {$b$};
				\draw[thick] (1,1.1) -- (4,1.1);
				\node at (2.5,1.3) {$\mathbf{1}_{[a,b]}(t)$};
				\draw[thick,red]
				(0.5,0) -- (1,0) -- (1.6,1) -- (3.4,1) -- (4,0) -- (4.5,0);
				\node[red] at (2.5,0.5) {$\chi^{(\theta)}_{[a,b]}(t)$};
				\draw[<->,blue] (1,0.2) -- (1.6,0.2);
				\node[blue] at (1.3,0.45) {$\theta h$};
				\draw[<->,blue] (3.4,0.2) -- (4,0.2);
				\node[blue] at (3.7,0.45) {$\theta h$};
			\end{tikzpicture}
			\caption{Soft indicator $\chi^{(\theta)}_{[a,b]}(t)$: it equals $1$ on 
				$[a+\theta h,b-\theta h]$, vanishes outside $[a,b]$, and has linear transitions 
				of width $\theta h$ near $a$ and $b$.}
		\end{figure}
		
		Given an interval $[a,b]$ and a parameter $\theta\in(0,1/2)$,
		introduce the softened indicator
		\begin{equation*}
			\chi^{(\theta)}_{[a,b]}(t)
			=
			\rho\!\left(\frac{t-a}{\theta h}\right)
			-
			\rho\!\left(\frac{t-(b-\theta h)}{\theta h}\right),
		\end{equation*}
		which takes values in $[0,1]$, equals $1$ on the inner interval
		$[a+\theta h,b-\theta h]$, equals $0$ outside $[a,b]$, and undergoes linear 
		transition across boundary layers of width $\theta h$.
		
		Turning to cubes, each hypercube
		\begin{equation*}
			Q_j = \prod_{k=1}^m [a_{jk},b_{jk}]
		\end{equation*}
		admits the softened indicator
		\begin{equation*}
			\chi^{(\theta)}_{Q_j}(x)
			=
			\prod_{k=1}^m \chi^{(\theta)}_{[a_{jk},b_{jk}]}(x_k),
			\qquad x=(x_1,\dots,x_m).
		\end{equation*}
		Every factor lies in $[0,1]$, equals $1$ on points at distance at least 
		$\theta h$ from $\partial Q_j$, and equals $0$ outside $Q_j$.
		
		\begin{figure}[h]
			\centering
			\begin{tikzpicture}[scale=1.0]
				\draw[thick] (0,0) rectangle (4,4);
				\node at (0.2,-0.35) {$Q_j$};
				\draw[fill=white] (0.6,0.6) rectangle (3.4,3.4);
				\begin{scope}
					\clip (0,0) rectangle (4,4);
					\fill[blue!15] (0,0) rectangle (4,4);
					\fill[white] (0.6,0.6) rectangle (3.4,3.4);
				\end{scope}
				\draw[thick] (0,0) rectangle (4,4);
				\draw[thick,blue!60] (0.6,0.6) rectangle (3.4,3.4);
				\draw[<->,blue] (0.6,0.2) -- (0,0.2);
				\node[blue] at (0.3,0.45) {$\theta h$};
			\end{tikzpicture}
			\caption{For the cube $Q_j$, the soft indicator equals $1$ on the inner white 
				region and $0$ outside $Q_j$. The shaded band of thickness $\theta h$ inside 
				$Q_j$ is the boundary layer where the transition occurs.}
		\end{figure}
		
		Applying the approximate multiplication construction in
		\cite{Yar17} separately to the $m$-factor product
		defining each $\chi^{(\theta)}_{Q_j}$, one obtains, for every $j$,
		a ReLU subnetwork approximating $\chi^{(\theta)}_{Q_j}$ to accuracy
		$\delta\in(0,1)$ whose size and depth are both bounded by
		\begin{equation*}
		C_m\bigl(1+\log(\delta^{-1})\bigr),
	    \end{equation*}
		where $C_m$ depends only on $m$.
		
		If $M=0$ or $\nu(K)=0$, the assertion follows from the zero network.
		Hence assume that $M>0$ and $\nu(K)>0$. Using these soft indicators,
		define the softened piecewise-constant function
		\begin{equation*}
			\widetilde H_J(x)
			=
			\sum_{j=1}^J c_j\,\chi^{(\theta)}_{Q_j}(x),
		\end{equation*}
		and set
		\begin{equation*}
			\mathcal B_J:=\bigcup_{j=1}^J\partial Q_j.
		\end{equation*}
		For $\delta>0$, define
		\begin{equation*}
			U_\delta:=\{x\in K:\operatorname{dist}(x,\mathcal B_J)<\delta\}.
		\end{equation*}
		By assumption, $\nu(K\cap\mathcal B_J)=0$, and since $\mathcal B_J$ is
		closed, $U_\delta\downarrow K\cap\mathcal B_J$ as $\delta\downarrow0$.
		Hence there exists $\delta_\varepsilon\in(0,h/2)$ such that
		\begin{equation*}
			\nu(U_{\delta_\varepsilon})
			\le \left(\frac{\varepsilon}{4M}\right)^2.
		\end{equation*}
		Choose $\theta=\delta_\varepsilon/h\in(0,1/2)$. Then the discrepancy
		between $\widetilde H_J$ and $H_J$ is supported in
		$U_{\delta_\varepsilon}$, and because
		$|\widetilde H_J-H_J|\le 2M$, we obtain
		\begin{equation*}
			\|\widetilde H_J-H_J\|_{L^2(K,\nu)}
			\le
			2M\sqrt{\nu(U_{\delta_\varepsilon})}
			\le \varepsilon/2.
		\end{equation*}
		
		It remains to replace the ideal products $\chi^{(\theta)}_{Q_j}$ by
		ReLU-realizable approximations. Choose
		$
		\delta_{\mathrm{mul}}
		:=
		\min\left\{\frac12,\frac{\varepsilon}{2MJ\nu(K)^{1/2}}\right\}
		$.
		For each $j$, let $\widehat\chi_{Q_j}$ be a ReLU network approximation of $\chi^{(\theta)}_{Q_j}$ satisfying
		\begin{equation*}
			\|\widehat\chi_{Q_j}-\chi^{(\theta)}_{Q_j}\|_{L^\infty(K)}
			\le \delta_{\mathrm{mul}}.
		\end{equation*}
		Define
		\begin{equation*}
			\mathcal A_J(x)
			:=
			\sum_{j=1}^{J}c_j\widehat\chi_{Q_j}(x).
		\end{equation*}
		Then
		\begin{equation*}
		\begin{aligned}
			\|\mathcal A_J-\widetilde H_J\|_{L^2(K,\nu)}
			&\le
			\sum_{j=1}^{J}|c_j|\,
			\|\widehat\chi_{Q_j}-\chi^{(\theta)}_{Q_j}\|_{L^2(K,\nu)}  \\
			&\le
			M J\delta_{\mathrm{mul}}\nu(K)^{1/2}
			\le \varepsilon/2.
		\end{aligned}
		\end{equation*}
		Combining this with the boundary-layer estimate gives
		\begin{equation*}
			\|\mathcal A_J-H_J\|_{L^2(K,\nu)}
			\le
			\|\mathcal A_J-\widetilde H_J\|_{L^2(K,\nu)}
			+
			\|\widetilde H_J-H_J\|_{L^2(K,\nu)}
			\le \varepsilon.
		\end{equation*}
		Let
		\begin{equation*}
			\widehat\chi(x)
			:=
			\bigl(\widehat\chi_{Q_1}(x),\ldots,\widehat\chi_{Q_J}(x)\bigr)^{\top}
			\in\mathbb R^J
		\end{equation*}
		and $C_J:=[c_1,\ldots,c_J]\in\mathbb R^{p\times J}$. Then $\mathcal A_J(x)=C_J\widehat\chi(x)$. The $J$ scalar indicator subnetworks are placed in parallel, while the final affine map $C_J:\mathbb R^J\to\mathbb R^p$, taken with zero bias, contributes at most $pJ$ nonzero weights. Consequently,
		\begin{equation*}
			\mathrm{size}(\mathcal A_J)
			\le
			C_*J\bigl(1+\log(\delta_{\mathrm{mul}}^{-1})\bigr)+pJ,
			\qquad
			\mathrm{depth}(\mathcal A_J)
			\le
			C_*\bigl(1+\log(\delta_{\mathrm{mul}}^{-1})\bigr).
		\end{equation*}
		Since
		\begin{equation*}
			\delta_{\mathrm{mul}}^{-1}
			\le
			\max\left\{2,2M\nu(K)^{1/2}\right\}\frac{J}{\varepsilon},
		\end{equation*}
		there exists a constant, depending only on $M$ and $\nu(K)$, such that
		\begin{equation*}
			1+\log(\delta_{\mathrm{mul}}^{-1})
			\le
			C_*\left(1+\log\frac{J}{\varepsilon}\right).
		\end{equation*}
		Substituting this estimate into the preceding size and depth
		bounds, and enlarging $C_*$ if necessary, yields the estimates
		stated in the lemma. Together with
		$\|\mathcal A_J-H_J\|_{L^2(K,\nu)}\le\varepsilon$, this completes
		the proof.
	\end{proof}
	
	We next consider the corresponding approximation estimate under a Lipschitz stability assumption.
    \begin{theorem}[Lipschitz-stable comparison estimate]
		Fix $m,p\in\mathbb N$. The logarithmic estimate in Theorem~\ref{thm:abstract-approximation} may be compared with a setting in which the physical inverse map satisfies a Lipschitz stability estimate on the relevant admissible data class. More precisely, let $D_G^{\mathrm{Lip}}\subset D_G$ be the admissible data set generated by such a finite-dimensional admissible class, and assume that $G:D_G^{\mathrm{Lip}}\to Y$ satisfies
		\begin{equation*}
			\|G(f_1)-G(f_2)\|_Y
			\le
			C_{\mathrm{Lip}}\|f_1-f_2\|_X,
			\qquad f_1,f_2\in D_G^{\mathrm{Lip}}.
		\end{equation*}
		Let $\mathcal D:\mathbb R^m\to X$ and $\mathcal P:Y\to\mathbb R^p$ be Lipschitz continuous, and set $K:=\operatorname{supp}(\mathcal E_\#\mu)\subset\mathbb R^m$. Assume that $K$ is compact and that
		$\mathcal D(K)\subset D_G^{\mathrm{Lip}}$. Define $H:=\mathcal P\circ G\circ\mathcal D:K\to\mathbb R^p$.
		Then, for every $\eta\in(0,1)$, there exists a ReLU neural network $\mathcal A_\eta:\mathbb R^m\to\mathbb R^p$ such that
		\begin{equation*}
			\mathscr E_{\mathcal A}(\mathcal A_\eta)
			=
			\|\mathcal A_\eta-H\|_{L^2(K,\mathcal E_\#\mu)}
			\le \eta,
		\end{equation*}
		and
		\begin{equation*}
			\mathrm{size}(\mathcal A_\eta)
			\le c'_0(p+1)\eta^{-m}\bigl(1+\log(\eta^{-1})\bigr),
			\qquad
			\mathrm{depth}(\mathcal A_\eta)
			\le c'_1+c'_2m\log(\eta^{-1}).
		\end{equation*}
		The constants may depend on the prescribed ranks and the fixed data in the assumptions, but are independent of $\eta$ and $N$. Moreover, for all sufficiently large $N$,
		\begin{equation*}
			\mathscr E_{\mathcal A}(N)
			\le
			C\left(\frac{(p+1)\log N}{N}\right)^{1/m}.
		\end{equation*}
	\end{theorem}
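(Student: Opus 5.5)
The argument mirrors the proof of Theorem~\ref{thm:abstract-approximation}. Only the modulus of continuity changes, and that change makes the mesh width polynomial in $\eta$ rather than exponentially small.

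\emph{Step 1: a Lipschitz bound for $H$.} Since $\mathcal D(K)\subset D_G^{\mathrm{Lip}}$, composing the three Lipschitz maps gives, for all $x_1,x_2\in K$,
\begin{equation*}
\|H(x_1)-H(x_2)\|_{\mathbb R^p}\le L_{\mathcal P}C_{\mathrm{Lip}}L_{\mathcal D}\|x_1-x_2\|_{\mathbb R^m}=:L_H\|x_1-x_2\|_{\mathbb R^m}.
\end{equation*}
This bound is global on $K$, so no smallness threshold such as $\tau$ is needed. Boundedness of $H$ on $K$ also follows, which gives $M:=\sup_K|H|<\infty$ as required in Lemma~\ref{lem:PC-J}.

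\emph{Step 2: a piecewise-constant approximant.} Enclose $K$ in a cube $[a,b]^m$. Partition it into $J=n^m$ congruent cubes of side $h\sim R/n$, shifted as before so that every cell boundary is $(\mathcal E_\#\mu)$-null. Such a shift exists because, for each coordinate, only countably many hyperplane positions can carry positive mass. Define $H_J(x):=H(x_j)$ on $Q_j\cap K$, with $x_j\in Q_j\cap K$. Then
\begin{equation*}
\|H-H_J\|_{L^\infty(K)}\le L_H\sqrt m\,h .
\end{equation*}
Choosing $h=\eta/(2L_H\sqrt m)$ makes this at most $\eta/2$, with $J\lesssim (R/h)^m\le C\eta^{-m}$.

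\emph{Step 3: the network bounds.} Apply Lemma~\ref{lem:PC-J} with $\nu=\mathcal E_\#\mu$ and tolerance $\eta/2$. The triangle inequality then gives total error at most $\eta$. The size bound is
\begin{equation*}
C_*J\bigl(1+\log(2J/\eta)\bigr)+pJ\le c_0'(p+1)\eta^{-m}\bigl(1+\log\eta^{-1}\bigr),
\end{equation*}
using $\log(J/\eta)\le C(1+(m+1)\log\eta^{-1})$. The depth bound is $C_*(1+\log(J/\eta))\le c_1'+c_2'm\log\eta^{-1}$, after absorbing $m+1\le 2m$ into the constants.

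\emph{Step 4: inverting the budget.} This is the only genuinely new step, and it is where the exponent $1/m$ and the $\log N$ factor must come out correctly. Given $N$, choose
\begin{equation*}
\eta_N:=C\left(\frac{(p+1)\log N}{N}\right)^{1/m}.
\end{equation*}
For $N$ large, $\eta_N<1$ and $\log\eta_N^{-1}\le \tfrac1m\log N$. It follows that
\begin{equation*}
c_0'(p+1)\eta_N^{-m}\bigl(1+\log\eta_N^{-1}\bigr)\le c_0'(p+1)\,C^{-m}\,\frac{N}{(p+1)\log N}\bigl(1+\tfrac1m\log N\bigr).
\end{equation*}
The right-hand side is at most $N$ once $C$ is taken large enough that $C^{m}\ge 2c_0'$; for large $N$ we have $1+\tfrac1m\log N\le 2\log N$. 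Hence $\mathrm{size}(\mathcal A_{\eta_N})\le N$. Since $\mathscr E_{\mathcal A}(N)$ is an infimum over networks of size at most $N$, this yields $\mathscr E_{\mathcal A}(N)\le\eta_N$, which is the claimed rate.

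A minor subtlety is that the constant $C_*$ in Lemma~\ref{lem:PC-J} depends on $M$. Here $M$ is fixed by $H$ and does not depend on $\eta$, so the constants remain independent of $\eta$ and $N$.
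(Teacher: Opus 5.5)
Your proposal is correct and follows the paper's proof essentially step for step: the Lipschitz bound with $L_H=L_{\mathcal P}C_{\mathrm{Lip}}L_{\mathcal D}$, a shifted cubical partition with $h\asymp\eta$ and $J\lesssim\eta^{-m}$, Lemma~\ref{lem:PC-J} at tolerance $\eta/2$, and inversion of the size budget via $\eta_N\asymp((p+1)\log N/N)^{1/m}$. The paper takes the explicit constant $(4c_0')^{1/m}$, which satisfies your condition $C^m\ge 2c_0'$, and your remarks on why a null-boundary shift exists and why $C_*$ stays independent of $\eta$ spell out details the paper leaves implicit.
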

	\begin{proof}
		Let $L_{\mathcal D}$ and $L_{\mathcal P}$ denote the Lipschitz constants of $\mathcal D$ and $\mathcal P$, respectively. Since $\mathcal D(K)\subset D_G^{\mathrm{Lip}}$, the map $H$ is Lipschitz on $K$ with
		\begin{equation*}
			\|H(x_1)-H(x_2)\|_{\mathbb R^p}
			\le
			L_H\|x_1-x_2\|_{\mathbb R^m},
			\qquad
			L_H:=L_{\mathcal P}C_{\mathrm{Lip}}L_{\mathcal D}.
		\end{equation*}
		Choose a cubical partition of a rectangle containing $K$ into $J=n^m$ cells of mesh width $h$ such that each cell boundary has $(\mathcal E_\#\mu)$-measure zero. For each cell $Q_j$ intersecting $K$, choose $x_j\in Q_j\cap K$ and define
		\begin{equation*}
			H_J(x):=H(x_j),
			\qquad x\in Q_j\cap K.
		\end{equation*}
		Since $\|x-x_j\|_{\mathbb R^m}\le\sqrt m\,h$ on each cell,
		\begin{equation*}
			\|H-H_J\|_{L^\infty(K)}
			\le L_H\sqrt m\,h.
		\end{equation*}
		Choosing $h\asymp\eta$ sufficiently small gives
		\begin{equation*}
			\|H-H_J\|_{L^\infty(K)}\le\eta/2,
			\qquad
			J\le C_J\eta^{-m}.
		\end{equation*}
		
		Applying Lemma~\ref{lem:PC-J} with tolerance $\eta/2$ and using
		$J\le C_J\eta^{-m}$, after enlarging the constants, yields a ReLU
		network $\mathcal A_\eta:\mathbb R^m\to\mathbb R^p$ such that
		\begin{equation*}
			\|\mathcal A_\eta-H_J\|_{L^2(K,\mathcal E_\#\mu)}
			\le\eta/2,
		\end{equation*}
		and
		\begin{equation*}
			\mathrm{size}(\mathcal A_\eta)
			\le
			c'_0(p+1)\eta^{-m}\bigl(1+\log(\eta^{-1})\bigr),
			\qquad
			\mathrm{depth}(\mathcal A_\eta)
			\le
			c'_1+c'_2m\log(\eta^{-1}).
		\end{equation*}
		Consequently,
		\begin{equation*}
			\|\mathcal A_\eta-H\|_{L^2(K,\mathcal E_\#\mu)}
			\le
			\|\mathcal A_\eta-H_J\|_{L^2(K,\mathcal E_\#\mu)}
			+
			\|H_J-H\|_{L^\infty(K)}
			\le\eta.
		\end{equation*}
		
		For sufficiently large $N$, set
		\begin{equation*}
			\eta_N
			:=
			\left(
			\frac{4c'_0(p+1)\log N}{N}
			\right)^{1/m}.
		\end{equation*}
		Then $\eta_N\in(0,1)$ and
		$1+\log(\eta_N^{-1})\le2\log N$. Hence
		\begin{equation*}
			\mathrm{size}(\mathcal A_{\eta_N})
			\le
			c'_0(p+1)
			\frac{N}{4c'_0(p+1)\log N}
			\,2\log N
			\le N.
		\end{equation*}
		Therefore,
		\begin{equation*}
			\mathscr E_{\mathcal A}(N)
			\le\eta_N
			\le
			C\left(
			\frac{(p+1)\log N}{N}
			\right)^{1/m},
		\end{equation*}
		which completes the proof.
	\end{proof}
	\begin{remark}
		For prescribed encoder and reconstruction dimensions $m$ and $p$, the derived upper bounds exhibit different dependence on the target accuracy: the logarithmic-stability construction yields an exponential network-size bound, whereas the Lipschitz-stability construction yields an algebraic bound. This is consistent with the stronger ill-posedness of the logarithmically stable inverse problem.
	\end{remark}
    
    \subsection{Finite-dimensional reconstruction}\label{subsec:reconstruction}
    
    The reconstruction module is a finite-dimensional, possibly affine, approximation mechanism for the output space $Y$.
    Let $\Pi_p^Y : Y \to Y$ be a finite-dimensional approximation map admitting a factorization
    \begin{equation*}
    \Pi_p^Y = \mathcal R_p \circ \mathcal P_p
    \end{equation*}
    for some Borel measurable maps
    \begin{equation*}
    \mathcal P_p : Y \to \mathbb{R}^p,
    \qquad
    \mathcal R_p : \mathbb{R}^p \to Y.
    \end{equation*}
    
    \begin{theorem}[Abstract reconstruction principle]
    	For any reconstruction mechanism of the form
    	\begin{equation*}
    	\Pi_p^Y = \mathcal R_p \circ \mathcal P_p,
    	\end{equation*}
    	one has
    	\begin{equation*}
    	\mathscr E_{\mathcal R}
    	=
    	\left(
    	\int_Y \|y-\Pi_p^Y y\|_Y^2\, d((\mathscr G)_\#\mu)(y)
    	\right)^{1/2}.
    	\end{equation*}
    	In particular, if
    	\begin{equation*}
    	\int_Y \|y-\Pi_p^Y y\|_Y^2\, d((\mathscr G)_\#\mu)(y)
    	\le \varepsilon_R(p)^2,
    	\end{equation*}
    	then
    	\begin{equation*}
    	\mathscr E_{\mathcal R}\le \varepsilon_R(p).
    	\end{equation*}
    \end{theorem}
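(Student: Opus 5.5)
The argument mirrors the proof of Theorem~\ref{thm:encoder-principle}: the reconstruction error $\mathscr E_{\mathcal R}$ is, by definition, an integral over $Y$ with respect to the pushforward measure $\mathscr G_\#\mu$. The only structural input is the factorization $\Pi_p^Y=\mathcal R_p\circ\mathcal P_p$. Taking $\mathcal R=\mathcal R_p$ and $\mathcal P=\mathcal P_p$ in the definition of $\mathscr E_{\mathcal R}$, the integrand becomes, pointwise for each $v\in Y$,
\begin{equation*}
\|(\mathcal R_p\circ\mathcal P_p)(v)-v\|_Y=\|\Pi_p^Y v-v\|_Y=\|v-\Pi_p^Y v\|_Y .
\end{equation*}
Integrating against $\mathscr G_\#\mu$ and taking square roots gives the stated identity.

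Before this identity makes sense, I will check that the integrand is a nonnegative Borel measurable function on $Y$. Since $\mathcal P_p$ and $\mathcal R_p$ are Borel measurable, so is $\Pi_p^Y$. The map $v\mapsto \|v-\Pi_p^Y v\|_Y^2$ is therefore Borel as a composition with the continuous norm. The measure $\mathscr G_\#\mu$ is a well-defined Borel probability measure on $Y$ because $\mathscr G$ is continuous. The integral therefore exists in $[0,\infty]$, and the equality holds in that extended sense; no finiteness assumption is needed.

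The ``in particular'' statement then follows directly: if the integral is bounded by $\varepsilon_R(p)^2$, taking square roots, which is monotone on $[0,\infty]$, gives $\mathscr E_{\mathcal R}\le\varepsilon_R(p)$.

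There is no genuine obstacle here; the only point needing care is the measurability and well-definedness check above. If one also wanted the expression in terms of $\mu$, the change-of-variables formula gives $\mathscr E_{\mathcal R}^2=\int_X\|\mathscr G(f)-\Pi_p^Y\mathscr G(f)\|_Y^2\,d\mu(f)$, but this is not required for the statement.
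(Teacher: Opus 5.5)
Your proposal is correct and matches the paper's proof: both substitute $\mathcal R=\mathcal R_p$, $\mathcal P=\mathcal P_p$ into the definition of $\mathscr E_{\mathcal R}$, replace the integrand pointwise using $\mathcal R_p\circ\mathcal P_p=\Pi_p^Y$, and integrate against $\mathscr G_\#\mu$, with the ``in particular'' part following from monotonicity of the square root. Your measurability check is an addition the paper omits; it is harmless, and Borel measurability of $\mathscr G$ alone would already suffice for the pushforward to be defined, so continuity is not needed.
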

    
    \begin{proof}
    	By the factorization $\Pi_p^Y = \mathcal R_p \circ \mathcal P_p$,
    	\begin{equation*}
    	\mathscr E_{\mathcal R}
    	=
    	\left(
    	\int_Y \|\mathcal R_p\circ \mathcal P_p(y)-y\|_Y^2\, d((\mathscr G)_\#\mu)(y)
    	\right)^{1/2}
    	=
    	\left(
    	\int_Y \|\Pi_p^Y y-y\|_Y^2\, d((\mathscr G)_\#\mu)(y)
    	\right)^{1/2}.
    	\end{equation*}
    \end{proof}
    
    We next describe several reconstruction mechanisms covered by this abstract principle.
    \setcounter{method}{0}
    \begin{method}{Smoothness-based spectral reconstruction}
    	A natural reconstruction mechanism is obtained from a spectral basis of the output space.
    	Suppose that $Y^s \hookrightarrow Y$ continuously, and let $\{\phi_j\}_{j\ge 1}$ be a spectral basis of $Y$.
    	Define
    	\[
    	\Pi_p^Y y := \sum_{j=1}^p \langle y,\phi_j\rangle_Y \phi_j,
    	\qquad
    	\mathcal P_p(y):=\bigl(\langle y,\phi_1\rangle_Y,\dots,\langle y,\phi_p\rangle_Y\bigr),
    	\qquad
    	\mathcal R_p(c):=\sum_{j=1}^p c_j\phi_j.
    	\]
    	Then
    	\[
    	\mathcal R_p\circ \mathcal P_p=\Pi_p^Y.
    	\]
    	If
    	\[
    	\|y-\Pi_p^Y y\|_Y \le C a_p \|y\|_{Y^s},
    	\qquad y\in Y^s,
    	\]
    	and
    	\[
    	\int_X \|\mathscr G(x)\|_{Y^s}^2\, d\mu(x)\le M_Y,
    	\]
    	then
    	\[
    	\mathscr E_{\mathcal R}^2
    	=
    	\int_Y \|y-\Pi_p^Y y\|_Y^2\, d((\mathscr G)_\#\mu)(y)
    	\le
    	C^2 a_p^2
    	\int_X \|\mathscr G(x)\|_{Y^s}^2\, d\mu(x),
    	\]
    	hence
    	\[
    	\mathscr E_{\mathcal R}\le C M_Y^{1/2} a_p.
    	\]
    	This is the reconstruction mechanism used when one has regularity information on the image of $\mathscr G$.
    \end{method}
    
    \begin{method}{Covariance-spectral reconstruction}
    	A second canonical mechanism is induced by the covariance operator of the pushforward measure. Assume that $\mathscr G_\#\mu$ has finite second moment, and define its mean by
    	\begin{equation*}
    		\bar y_\mu:=\int_Y y\,d(\mathscr G_\#\mu)(y).
    	\end{equation*}
    	Let $\Gamma_{\mathscr G_\#\mu}$ be the covariance operator of $\mathscr G_\#\mu$, with eigenpairs $(\sigma_j,\xi_j)$. Define
    	\begin{equation*}
    		\Pi_p^Y y
    		=
    		\sum_{j=1}^p \langle y-\bar y_\mu,\xi_j\rangle_Y\,\xi_j+\bar y_\mu,
    	\end{equation*}
    	\begin{equation*}
    		\mathcal P_p(y):=
    		\bigl(
    		\langle y-\bar y_\mu,\xi_1\rangle_Y,\dots,
    		\langle y-\bar y_\mu,\xi_p\rangle_Y
    		\bigr),
    		\qquad
    		\mathcal R_p(c):=\sum_{j=1}^p c_j\xi_j+\bar y_\mu.
    	\end{equation*}
    	Then $\mathcal R_p\circ \mathcal P_p=\Pi_p^Y$, and
    	\begin{equation*}
    		\int_Y \|y-\Pi_p^Y y\|_Y^2\,d(\mathscr G_\#\mu)(y)
    		=
    		\sum_{j>p}\sigma_j.
    	\end{equation*}
    	Hence
    	\begin{equation*}
    		\mathscr E_{\mathcal R}^2=\sum_{j>p}\sigma_j.
    	\end{equation*}
    	This is the covariance-based spectral reconstruction mechanism, also known as Karhunen--Lo\`eve or principal component analysis (PCA) reconstruction \cite{LMK22}. This mechanism is optimal at the affine level, but may be difficult to quantify explicitly for nonlinear $\mathscr G$, since the eigensystem of $\mathscr G_\#\mu$ is generally hard to characterize.
    \end{method}
    
    \begin{method}{Affine reconstruction generated by trunk functions}
    	Suppose that $Y$ is a function space on an output domain $\Omega$.
    	Let $\tau_1,\ldots,\tau_p\in Y$ be prescribed reconstruction
    	functions and let $b\in Y$. Define
    	\[
    	\mathcal R_p(c):=b+\sum_{j=1}^p c_j\tau_j,
    	\qquad c=(c_1,\ldots,c_p)\in\mathbb R^p.
    	\]
    	For a coefficient map $\mathcal P_p:Y\to\mathbb R^p$, set
    	\[
    	\Pi_{p,\tau}^Y:=\mathcal R_p\circ \mathcal P_p.
    	\]
    	The image of $\Pi_{p,\tau}^Y$ is contained in the affine space
    	$b+\operatorname{span}\{\tau_1,\ldots,\tau_p\}$.
    	Consequently,
    	\[
    	\mathscr E_{\mathcal R}
    	=
    	\left(
    	\int_Y
    	\|y-\Pi_{p,\tau}^Y y\|_Y^2
    	\,d(\mathscr G_{\#}\mu)(y)
    	\right)^{1/2}.
    	\]
    	Thus, for fixed trunk functions and coefficient map $\mathcal P_p$, the reconstruction error is precisely the mean-square error associated with the induced affine approximation map $\Pi_{p,\tau}^Y$.
    \end{method}

    \begin{method}{Trunk-net realization of ideal reconstruction bases}
    	A finite-rank reconstruction may first be defined by an ideal affine reconstructor
    	\begin{equation*}
    		\mathcal R_p^*(c):=\sum_{j=1}^p c_j\phi_j^*+b^*,
    		\qquad c\in\mathbb R^p,
    	\end{equation*}
    	together with a coefficient map $\mathcal P_p:Y\to\mathbb R^p$. The corresponding ideal affine approximation is
    	\begin{equation*}
    		\Pi_p^{Y,*}:=\mathcal R_p^*\circ\mathcal P_p.
    	\end{equation*}
    	In an implemented DeepONet, the branch network produces the coefficients $c_j$, while the trunk network realizes the reconstruction functions in the output variable. More precisely, when $Y$ is a function space on an output domain $\Omega$, the vector-valued trunk map $\boldsymbol{\tau}:=(\tau_1,\ldots,\tau_p):\Omega\to\mathbb R^p$ is realized by a feedforward neural network. Thus the ideal functions $\phi_j^*$ may be replaced by trunk functions $\tau_j$, giving
    	\begin{equation*}
    		\mathcal R_p(c):=\sum_{j=1}^p c_j\tau_j+b.
    	\end{equation*}
    	Let $\nu:=\mathscr G_\#\mu$. Then
    	\begin{equation*}
    		\mathscr E_{\mathcal R}
    		=
    		\left(
    		\int_Y
    		\|y-\mathcal R_p\mathcal P_p y\|_Y^2
    		\,d\nu(y)
    		\right)^{1/2}
    		\le
    		\mathscr E_{\mathcal R}^{\mathrm{rank}}
    		+
    		\mathscr E_{\mathcal R}^{\mathrm{trunk}},
    	\end{equation*}
    	where
    	\begin{equation*}
    		\mathscr E_{\mathcal R}^{\mathrm{rank}}
    		:=
    		\left(
    		\int_Y
    		\|y-\mathcal R_p^*\mathcal P_p y\|_Y^2
    		\,d\nu(y)
    		\right)^{1/2},
    	\end{equation*}
    	and
    	\begin{equation*}
    		\mathscr E_{\mathcal R}^{\mathrm{trunk}}
    		:=
    		\left(
    		\int_Y
    		\|(\mathcal R_p^*-\mathcal R_p)\mathcal P_p y\|_Y^2
    		\,d\nu(y)
    		\right)^{1/2}.
    	\end{equation*}
    \end{method}
	
	\section{DeepONet error estimate}\label{section3}
	In this section, we combine the approximation result of Section~\ref{section2} with the encoding and reconstruction errors to derive an error estimate for the full DeepONet.
	\begin{theorem}[DeepONet error estimate]\label{thm:error-decomposition}
		Fix $m,p\in\mathbb N$, and let $c_0,c_1,c_2,c_3,N_0$ be as in Theorem~\ref{thm:abstract-approximation}. Assume the logarithmic stability estimate in Assumption~\ref{ass:stability}. Let $\mu$ be a Borel probability measure on $X$ such that $\mu(D_G)=1$, and let
		\begin{equation*}
			K:=\operatorname{supp}(\mathcal E_\#\mu)\subset \mathbb R^m
		\end{equation*}
		be compact. Assume that $\mathcal D(K)\subset D_G$, that $\mathcal D:\mathbb R^m\to X$ and $\mathcal P:Y\to\mathbb R^p$ are Lipschitz continuous, and that
		\begin{equation*}
			\varepsilon_m
			:=
			\operatorname*{ess\,sup}_{x\in X}
			\|\mathcal D\circ\mathcal E(x)-x\|_X
			\le \min\{\tau,e^{-1}\}.
		\end{equation*}
		The essential supremum is taken with respect to $\mu$, and hence $\mathscr E_{\mathcal E}\le\varepsilon_m$ since $\mu$ is a probability measure. Assume further that $\mathcal R$ is Lipschitz continuous. In addition, suppose that, for some prescribed reconstruction rate $\varepsilon_R(p)$, the chosen reconstruction mechanism satisfies $\mathscr E_{\mathcal R} \le C_2 \varepsilon_R(p)$ for some constant $C_2>0$ independent of $p$.
		
		Then, for every $N>N_0$, there exists a ReLU approximator
		$\mathcal A_N:\mathbb R^m\to\mathbb R^p$, with
		$\mathrm{size}(\mathcal A_N)\le N$, such that
		\begin{equation}\label{total-error}
			\mathscr E
			\le
			\Lip(\mathcal R)
			\left[
			\frac{1}{c_1m}
			\log\!\left(\frac{N}{c_0(p+1)}\right)
			\right]^{-\beta}
			+
			\Lip(\mathcal R\circ\mathcal P)\,C_{\mathrm{stab}}|\log\varepsilon_m|^{-\beta}
			+
			C_2\varepsilon_R(p).
		\end{equation}
		Moreover,
			for every $\eta\in(0,1)$, the construction provides an approximator satisfying
			\begin{equation*}
				\mathrm{size}(\mathcal A_\eta)
				\le c_0(p+1)\exp \bigl(c_1m\eta^{-1/\beta}\bigr),
				\qquad
				\mathrm{depth}(\mathcal A_\eta)
				\le c_2+c_3m\eta^{-1/\beta}.
			\end{equation*}
	\end{theorem}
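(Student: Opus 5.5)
The plan is to follow the Lanthaler--Mishra--Karniadakis three-term splitting at the level of the pointwise error $\mathscr G(f)-\mathscr N(f)$, and then bound each term by an earlier result. For $\mu$-a.e.\ $f$ (so $f\in D_G$ and $\mathscr G(f)=G(f)$), insert the intermediate quantities $\mathcal R\circ\mathcal P\circ\mathscr G\circ\mathcal D\circ\mathcal E(f)$ and $\mathcal R\circ\mathcal P\circ\mathscr G(f)$ and write
\begin{equation*}
\mathscr G(f)-\mathscr N(f)
=
\bigl[\mathcal R\mathcal P\mathscr G\mathcal D\mathcal E(f)-\mathcal R\mathcal A\mathcal E(f)\bigr]
+\bigl[\mathcal R\mathcal P\mathscr G(f)-\mathcal R\mathcal P\mathscr G\mathcal D\mathcal E(f)\bigr]
+\bigl[\mathscr G(f)-\mathcal R\mathcal P\mathscr G(f)\bigr].
\end{equation*}
Take $L^2(\mu;Y)$ norms and apply Minkowski's inequality. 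This yields three contributions, which are bounded one at a time.

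The first term is bounded by $\Lip(\mathcal R)\,\|\mathcal A(\mathcal E f)-H(\mathcal E f)\|$ with $H=\mathcal P\circ G\circ\mathcal D$. This is well defined because $\mathcal E(f)\in K$ for $\mu$-a.e.\ $f$ and $\mathcal D(K)\subset D_G$. Changing variables to the pushforward $\mathcal E_\#\mu$ gives $\Lip(\mathcal R)\,\mathscr E_{\mathcal A}$. Now apply Theorem~\ref{thm:abstract-approximation} for $N>N_0$ and take the network $\mathcal A_{\eta_N}$ constructed there, which has size at most $N$. It satisfies $\mathscr E_{\mathcal A}\le\eta_N=[\frac{1}{c_1m}\log(N/(c_0(p+1)))]^{-\beta}$, which is the first term of \eqref{total-error}. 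The size and depth bounds for $\mathcal A_\eta$ are then exactly those of Theorem~\ref{thm:abstract-approximation}, so the ``moreover'' part follows directly.

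The second term is bounded by $\Lip(\mathcal R\circ\mathcal P)\,\|G(f)-G(\mathcal D\mathcal E f)\|_Y$, where both arguments lie in $D_G$. Since $\|\mathcal D\mathcal E f-f\|_X\le\varepsilon_m\le\tau$ $\mu$-a.e., Assumption~\ref{ass:stability} gives the bound $C_{\mathrm{stab}}\,|\log\|\mathcal D\mathcal E f-f\|_X|^{-\beta}$. The function $t\mapsto|\log t|^{-\beta}$ is nondecreasing on $[0,e^{-1}]$, with value $0$ at $t=0$ by the stated convention. We therefore get $C_{\mathrm{stab}}|\log\varepsilon_m|^{-\beta}$ pointwise a.e., and since $\mu$ is a probability measure the $L^2(\mu)$ norm obeys the same bound. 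This is where the essential-supremum hypothesis is needed: with only $\mathscr E_{\mathcal E}$, one would have to invoke Jensen's inequality with the concavity of $\omega$ near $0$, and that argument is less clean.

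The third term equals $\mathscr E_{\mathcal R}$ after pushing forward by $\mathscr G$, and it is at most $C_2\varepsilon_R(p)$ by hypothesis. Adding the three bounds gives \eqref{total-error}.

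The main obstacle is not the estimates themselves but the measure-theoretic bookkeeping. One has to check that $\mathcal E(f)\in K$ holds $\mu$-a.e., since the support of a pushforward has full measure. One also has to make sure that the approximator's $L^2$ error on $K$ transfers to $\mu$, and that $\mathcal P\circ\mathscr G$ is $\mathscr G_\#\mu$-square-integrable, so that every term is finite. I would address these first, then carry out the three bounds in the order above.
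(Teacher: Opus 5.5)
Your proposal is correct and follows the paper's proof essentially step for step. It uses the same three-term splitting through $\mathcal R\mathcal P\mathscr G\mathcal D\mathcal E$ and $\mathcal R\mathcal P\mathscr G$, the Lipschitz bound on the first term combined with Theorem~\ref{thm:abstract-approximation} at $\eta_N$, the pointwise monotonicity argument with the essential supremum $\varepsilon_m$ on the second term, and the pushforward identity identifying the third term with $\mathscr E_{\mathcal R}$. Your explicit check that $\mathcal E(f)\in K$ for $\mu$-a.e.\ $f$, so that $\mathcal D\mathcal E f\in D_G$ and $\mathscr G\circ\mathcal D=G\circ\mathcal D$ on $K$, makes precise a point the paper only asserts.
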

	
	\begin{proof}
		We decompose the total error into three parts:
		\begin{equation*}
			\begin{aligned}
				\mathscr{N} - \mathscr{G}
				&= \mathcal{R}\circ\mathcal{A}\circ\mathcal{E} - \mathscr{G} \\
				&= [\mathcal{R}\circ\mathcal{A}\circ\mathcal{E} - \mathcal{R}\circ\mathcal{P}\circ \mathscr{G}]
				+[\mathcal{R}\circ\mathcal{P}\circ \mathscr{G}-\mathscr{G}] \\
				&= [\mathcal{R}\circ\mathcal{A}\circ\mathcal{E} - \mathcal{R}\circ\mathcal{P}\circ \mathscr{G}\circ\mathcal{D}\circ\mathcal{E}] \\
				&\quad + [\mathcal{R}\circ\mathcal{P}\circ \mathscr{G}\circ\mathcal{D}\circ\mathcal{E} - \mathcal{R}\circ\mathcal{P}\circ \mathscr{G}] \\
				&\quad + [\mathcal{R}\circ\mathcal{P}\circ \mathscr{G} - \mathscr{G}] \\
				&=: T_1+T_2+T_3.
			\end{aligned}
		\end{equation*}
		
		We can now estimate the norms of these three terms as follows:
		\begin{equation*}
			\begin{aligned}
				\|T_1\|_{L^2(\mu)}
				&=
				\left(
				\int_X
				\|\mathcal{R}\circ\mathcal{A}\circ\mathcal{E}
				-
				\mathcal{R}\circ\mathcal{P}\circ \mathscr{G}\circ\mathcal{D}\circ\mathcal{E}\|_{Y}^{2}
				\,d\mu
				\right)^{1/2}\\
				&\leq
				\Lip(\mathcal{R})
				\left(
				\int_X
				\|\mathcal{A}\circ\mathcal{E}
				-
				\mathcal{P}\circ \mathscr{G}\circ\mathcal{D}\circ\mathcal{E}\|_{\ell^2(\RR^p)}^{2}
				\,d\mu
				\right)^{1/2}\\
				&=
				\Lip(\mathcal{R})
				\left(
				\int_{\RR^m}
				\|
				\mathcal{A}
				-
				\mathcal{P}\circ \mathscr{G}\circ\mathcal{D}
				\|_{\ell^2(\RR^p)}^{2}
				\,d(\mathcal{E}_{\#}\mu)
				\right)^{1/2}\\
				&=
				\Lip(\mathcal{R})\cdot \mathscr{E}_{\mathcal{A}}.
			\end{aligned}
		\end{equation*}
		
		For the second term, we get
		\begin{equation*}
			\begin{aligned}
				\|T_2\|_{L^2(\mu)}
				&=
				\left(
				\int_X
				\|
				\mathcal{R}\circ\mathcal{P}\circ \mathscr{G}\circ\mathcal{D}\circ\mathcal{E}
				-
				\mathcal{R}\circ\mathcal{P}\circ \mathscr{G}
				\|_{Y}^{2}
				\,d\mu
				\right)^{1/2}\\
				&\leq
				\Lip(\mathcal{R}\circ\mathcal{P})
				\left(
				\int_X
				\|
				\mathscr{G}\circ\mathcal{D}\circ\mathcal{E}
				-
				\mathscr{G}
				\|_{Y}^{2}
				\,d\mu
				\right)^{1/2}.
			\end{aligned}
		\end{equation*}
		
		For $\mu$-a.e. $x$, both $x$ and $\mathcal D\circ\mathcal E(x)$ belong to $D_G$. Applying the logarithmic stability estimate,
		\begin{equation*}
			\|\mathscr{G}\circ\mathcal{D}\circ\mathcal{E}(x)-\mathscr{G}(x)\|_Y
			\leq
			C_{\mathrm{stab}}\,\omega(\|\mathcal{D}\circ\mathcal{E}(x)-x\|_X),
		\end{equation*}
		where $\omega(t)=|\log t|^{-\beta}$.
		Let
		\begin{equation*}
			\delta(x)=\|\mathcal{D}\circ\mathcal{E}(x)-x\|_X,
		\end{equation*} 
		\begin{equation*}
			\varepsilon_m:=\operatorname*{ess\,sup}_{x\in X}\|\mathcal{D}\circ\mathcal{E}(x)-x\|_X,
		\end{equation*}
		where the essential supremum is taken with respect to $\mu$.
		By definition of $\varepsilon_m$, $\delta(x)\le \varepsilon_m$ for $\mu$-a.e. $x$. Since $\varepsilon_m\le \min\{\tau,e^{-1}\}$, the logarithmic stability estimate applies on the relevant range and $\omega$ is monotone there. Hence
		\begin{equation*}
			\omega(\delta(x))
			\leq
			\omega(\varepsilon_m)
			=
			|\log \varepsilon_m|^{-\beta}.
		\end{equation*}
		Therefore,
		\begin{equation*}
			\left(
			\int_X (\omega(\delta(x)))^2\,d\mu(x)
			\right)^{1/2}
			\leq
			\omega(\varepsilon_m)
			=
			|\log \varepsilon_m|^{-\beta}.
		\end{equation*}
		We obtain
		\begin{equation*}
			\|T_2\|_{L^2(\mu)}
			\leq
			\Lip(\mathcal{R}\circ\mathcal{P})\cdot C_{\mathrm{stab}}\cdot |\log \varepsilon_m|^{-\beta}.
		\end{equation*}
		
		For the third term, we obtain
		\begin{equation*}
			\begin{aligned}
				\|T_3\|_{L^2(\mu)}
				&=
				\left(
				\int_X
				\|
				\mathcal{R}\circ\mathcal{P}\circ \mathscr{G}
				-
				\mathscr{G}
				\|_{Y}^{2}
				\,d\mu
				\right)^{1/2}\\
				&=
				\left(
				\int_Y
				\|(\mathcal R\circ\mathcal P)(y)-y\|_Y^2
				\,d(\mathscr G_\#\mu)(y)
				\right)^{1/2}\\
				&=
				\mathscr E_{\mathcal R}.
			\end{aligned}
		\end{equation*}
		By the assumed reconstruction error estimate, we have
		\begin{equation*}
			\|T_3\|_{L^2(\mu)}
			\leq
			C_2\,\varepsilon_R(p).
		\end{equation*}
		
		Combining these estimates yields
		\begin{equation*}
			\mathscr{E}
			\leq
			\Lip(\mathcal{R})\cdot \mathscr{E}_{\mathcal{A}}
			+
			\Lip(\mathcal{R}\circ\mathcal{P})\cdot C_{\mathrm{stab}}\cdot |\log \varepsilon_m|^{-\beta}
			+
			C_2\,\varepsilon_R(p).
		\end{equation*}
		
		Finally, Theorem~\ref{thm:abstract-approximation} provides an approximator $\mathcal A_N$ of size at most $N$ satisfying
		\begin{equation*}
			\mathscr E_{\mathcal A}
			\le
			\left[
			\frac{1}{c_1m}
			\log\!\left(\frac{N}{c_0(p+1)}\right)
			\right]^{-\beta}.
		\end{equation*}
		Substitution into the preceding decomposition gives \eqref{total-error}.
	\end{proof}
	
	\begin{remark}
		In the Lipschitz-stable regime discussed above, assume in addition that $\mu(D_G^{\mathrm{Lip}})=1$. Then, for all sufficiently large $N$, the same error decomposition yields
		\begin{equation*}
			\mathscr E
			\le
			\Lip(\mathcal R)\,C_1
			\left(\frac{(p+1)\log N}{N}\right)^{1/m}
			+
			\Lip(\mathcal R\circ\mathcal P)\,C_{\mathrm{Lip}}\varepsilon_m
			+
			C_2\varepsilon_R(p).
		\end{equation*}
	\end{remark}
	
	\section{Application to the inverse scattering problem}\label{section4}
	
	In this section, we apply the abstract DeepONet framework to inverse acoustic medium scattering. Let $d\in\{2,3\}$ and $B:=\{x\in\mathbb R^d:|x|\le R\}$. We consider a complex-valued medium contrast $q\in L^\infty(\mathbb R^d)$ satisfying $\operatorname{supp}(q)\Subset B^\circ$. Unless otherwise stated, all scalar-valued function spaces in this section and the appendices are understood over $\mathbb C$. The scattering of a time-harmonic acoustic plane wave by the corresponding inhomogeneous medium is described by the total field $u_q=u^{\mathrm{inc}}+u_q^{\mathrm{sca}}$, which satisfies
	
	\begin{equation}\label{eq:1.1}
		\Delta u_{q}+k^{2}(1+q) u_{q}=0 \text{ in } \mathbb{R}^{d},
	\end{equation}
	and the Sommerfeld radiation condition 
	\begin{equation}\label{eq:1.2}
		\lim _{|x| \to \infty}|x|^{\frac{d-1}{2}}\left(\frac{\partial u_{q}^{\mathrm{sca}}}{\partial|x|}-i k u_{q}^{\mathrm{sca} }\right)=0.
	\end{equation}
	
	Assume that $u^{\mathrm{inc}}$ is the plane incident field, i.e., $u^{\mathrm{inc}}=e^{i k x \cdot \omega}$ with $\omega \in \mathbb{S}^{d-1}$. Then the scattered field $u_{q}^{\mathrm{sca}}$ satisfies 
	\begin{equation}\label{eq:uqsca}
		u_{q}^{\mathrm{sca}}(x, \omega)=\frac{e^{i k|x|}}{|x|^{\frac{d-1}{2}}} u_{q}^{\infty}(\hat{x}, \omega)+O\left(|x|^{-\frac{d+1}{2}}\right) \text{ as } |x| \to \infty,
	\end{equation}
	where $\hat{x}=x /|x|$; see, for example, \cite[p.~232]{Ser17}. The inverse scattering problem is to determine the medium contrast $q$ from knowledge of the far-field pattern or scattering amplitude $u_q^\infty(\hat{x},\omega)$ for all $\hat{x}$, $\omega \in \mathbb{S}^{d-1}$ at one fixed energy $k^{2}$. Inverse problems for identifying inhomogeneous media and obstacles have been studied extensively; see, for example, \cite{CK19, CC14, CCH22, KG07, NP15}. For a computational perspective, we refer to \cite{Che18}.
	
	It is known that the far-field pattern $u_{q}^{\infty}(\hat{x}, \omega)$ uniquely determines the near-field data of \eqref{eq:1.1} on $\partial B$, which in turn determines the Dirichlet-to-Neumann map of \eqref{eq:1.1} provided $0$ is not a Dirichlet eigenvalue of $\Delta+k^{2}(1+q)$ on $B$, see \cite{Nac88}. Combining this fact with the uniqueness results proved in \cite{SU87, Buk08} implies that $u_{q}^{\infty}(\hat{x}, \omega)$ for all $\hat{x}, \omega \in \mathbb{S}^{d-1}$ determines $q$ uniquely, at least when $q$ is essentially bounded. The inverse scattering problem is notoriously ill-posed. Correspondingly, a log-type estimate has been derived in \cite{HH01}. Under additional finite-dimensional or structural restrictions, Lipschitz stability estimates are available; see \cite{Bou13,AS22}.

	\subsection{DeepONet setup}
	
	For the qualitative operator-learning formulation, we restrict the contrast to the closed admissible class $\mathcal Q\subset C(B)$ satisfying the assumptions of Theorem~\ref{borel-measurability}. For $q\in\mathcal Q$, let $u_{q}^{\infty}(\hat{x},\omega)$ be the corresponding far-field pattern. Let $D:=\{u_q^\infty:q\in\mathcal Q\}\subset C(\mathbb S^{d-1}\times\mathbb S^{d-1})$ be the physical far-field data range. We use a Borel measurable extension
	\[
		\mathscr G:L^2(\mathbb S^{d-1}\times\mathbb S^{d-1})\to L^2(B)
	\]
	of the physical inverse map, satisfying
	$q=\mathscr{G}(u_{q}^{\infty}(\hat{x},\omega))$ for every $q\in\mathcal Q$; see Theorem~\ref{borel-measurability}.
	It is known that $u_{q}^{\infty}(\hat{x},\omega)$ is analytic on $\mathbb{S}^{d-1}\times\mathbb{S}^{d-1}$, and hence smooth. Our aim here is to construct a surrogate for $\mathscr{G}$ using a DeepONet architecture.
	
	To begin, let $\mathcal{E}:C(\mathbb{S}^{d-1}\times\mathbb{S}^{d-1}) \to\mathbb{C}^{n\times n}$ be an \textit{encoder}. Since the far-field pattern is complex-valued, we identify
	\(\mathbb C^{n\times n}\) with \(\mathbb R^{2n^2}\) by separating real and imaginary parts. Thus the output of the encoder is regarded as a real vector in \(\mathbb R^m\), with \(m=2n^2\). For complex-valued contrasts $q$, if $r_p$ complex reconstruction coefficients are retained, their real and imaginary parts are concatenated, so that $\mathbb C^{r_p}\simeq\mathbb R^p$ with $p=2r_p$; the reconstructor then recombines these real coordinates into complex coefficients.
	Next, we define an \textit{approximator} $\mathcal{A}:\mathbb{R}^{m}\to\mathbb{R}^{p}$ for some $p\in\mathbb{N}$, which is a feedforward deep neural network.
	
	As in the DeepONet architecture, we define the \textit{branch net} as the composition of $\mathcal{A}$ and $\mathcal{E}$, i.e., $\boldsymbol{\beta}=\mathcal{A}\circ\mathcal{E}:C(\mathbb{S}^{d-1}\times\mathbb{S}^{d-1})\to\mathbb{R}^{p}$. A \textit{reconstructor} $\mathcal{R}:\mathbb{R}^{p}\to C(B)$ is then defined.
	Putting all components together, we now have a DeepONet:
	
	\begin{equation*}
		\mathscr{N}:C(\mathbb{S}^{d-1}\times\mathbb{S}^{d-1})\to C(B)\subset L^{2}(B),\quad\mathscr{N}(f)=(\mathcal{R}\circ\mathcal{A}\circ\mathcal{E})(f).
	\end{equation*}
	
	To describe the approximation error of $\mathscr{N}$ to $\mathscr{G}$ rigorously, let $\mu$ be a Borel probability measure on $X:=L^{2}(\mathbb{S}^{d-1}\times\mathbb{S}^{d-1})$ satisfying $\mu(D)=1$. Since $D\subset C(\mathbb{S}^{d-1}\times\mathbb{S}^{d-1})$, this implies $\mu(C(\mathbb{S}^{d-1}\times\mathbb{S}^{d-1}))=1$. Assume that $\mu$ has a finite second moment, i.e., $\int_{X}\|f\|_{X}^{2}d\mu(f)<\infty$. We now define
	
	\begin{equation}\label{error}
		\mathscr{E}:=\left(\int_{X}\int_{B}|\mathscr{G}(f)(y)-\mathscr{N}(f)(y)|^{2}dy d\mu(f)\right)^{1/2}.
	\end{equation}
	
	Since the encoder $\mathcal{E}$ takes the pointwise values of $u_{q}^{\infty}(\hat{x},\omega)$ on $\mathbb{S}^{d-1}\times\mathbb{S}^{d-1}$, it is not well-defined for functions in $L^{2}(\mathbb{S}^{d-1}\times\mathbb{S}^{d-1})$. Following \cite[Lemma B.1]{LMK22}, we use a Borel measurable extension of the pointwise encoder to $X=L^2(\mathbb S^{d-1}\times\mathbb S^{d-1})$; the resulting DeepONet $\mathscr N=\mathcal R\circ\mathcal A\circ\mathcal E$ is therefore Borel measurable on $X$, and \eqref{error} is well defined. We can directly apply \cite[Theorem 3.1]{LMK22} to prove a universal approximation theorem for the mapping $\mathscr{G}$, which maps from the far-field pattern to the perturbation $q$. The following qualitative approximation result shows that the inverse-scattering map can be approximated by a DeepONet in the $L^2(\mu;L^2(B))$ sense.

	\begin{theorem}[DeepONet approximation for inverse scattering]\label{thm:deeponet-inverse-scattering-approx}
		For any $\varepsilon > 0$, there exists a DeepONet $\mathscr{N} = \mathcal{R} \circ \mathcal{A} \circ \mathcal{E}: C(\mathbb{S}^{d-1} \times \mathbb{S}^{d-1}) \to L^2(B)$ whose Borel measurable extension to $X$, still denoted by $\mathscr N$, satisfies
		\begin{equation*}
			\| \mathscr{G} - \mathscr{N} \|_{L^2(\mu)} = \left( \int_X \int_B | \mathscr{G}(f)(y) - \mathscr{N}(f)(y) |^2  dy  d\mu(f) \right)^{1/2} < \varepsilon,
		\end{equation*}
		where $X = L^2(\mathbb{S}^{d-1} \times \mathbb{S}^{d-1})$, $Y = L^2(B)$, and $\mu$ is a Borel probability measure on $X$ satisfying $\mu(D)=1$. Since $D\subset C(\mathbb{S}^{d-1} \times \mathbb{S}^{d-1})$, this implies $\mu(C(\mathbb{S}^{d-1} \times \mathbb{S}^{d-1}))=1$. Assume that $\mu$ has a finite second moment. Assume that the restricted measure $\mu|_{C(\mathbb{S}^{d-1} \times \mathbb{S}^{d-1})}$ is a Borel probability measure on
		$\big(C(\mathbb{S}^{d-1} \times \mathbb{S}^{d-1}),\|\cdot\|_{\infty}\big)$, and $\int_X \|\mathscr G(f)\|_{L^2(B)}^2\,d\mu(f)<\infty$.
	\end{theorem}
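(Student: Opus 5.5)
The plan is to reduce the statement to the universal approximation theorem for DeepONets, \cite[Theorem 3.1]{LMK22}. That theorem gives an approximating DeepONet $\mathscr N$ with $\|\mathscr G-\mathscr N\|_{L^2(\mu)}<\varepsilon$ whenever four conditions hold: $\mu$ is a Borel probability measure on a separable Hilbert space $X$ that is concentrated on the continuous functions over a compact domain; $\mathscr G\in L^2(\mu;Y)$ is Borel measurable; $Y=L^2(B)$ with $B$ compact; and the encoder consists of point evaluations at sensor points, extended measurably to $X$ via \cite[Lemma B.1]{LMK22}. So the proof is mainly a verification that our setting matches these hypotheses.

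\textbf{Step 1: the spaces and the domain.} Take $X=L^2(\mathbb S^{d-1}\times\mathbb S^{d-1})$. This is separable, and its underlying domain $U:=\mathbb S^{d-1}\times\mathbb S^{d-1}$ is a compact smooth manifold. After splitting real and imaginary parts, all function spaces are real Hilbert spaces, so the complex-valued setting fits the real framework of \cite{LMK22}. The output space $Y=L^2(B)$ is likewise a separable Hilbert space on the compact set $B$.

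\textbf{Step 2: the measure and the target map.} By Theorem~\ref{borel-measurability}, $\mathscr G$ is Borel measurable on $X$. The hypothesis $\int_X\|\mathscr G(f)\|^2_{L^2(B)}d\mu<\infty$ then gives $\mathscr G\in L^2(\mu;Y)$. Since $\mu(D)=1$ and $D\subset C(U)$, and by assumption the restriction of $\mu$ to $(C(U),\|\cdot\|_\infty)$ is a Borel probability measure, $\mu$ is concentrated on continuous functions exactly as \cite[Theorem 3.1]{LMK22} requires. The finite second moment of $\mu$ is also assumed.

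\textbf{Step 3: encoder and reconstructor.} Choose sensor points $\{(\hat x_i,\omega_j)\}$ on $U$ forming increasingly fine grids, for instance quasi-uniform point sets, so that the associated piecewise-linear or partition-of-unity decoders $\mathcal D$ satisfy $\mathcal D\circ\mathcal E(f)\to f$ uniformly for each $f\in C(U)$. Dominated convergence, using the second moment of $\mu$ and the bound $\|\mathcal D\mathcal E f\|\lesssim\|f\|_\infty$, then gives $\mathscr E_{\mathcal E}\to0$. Extend $\mathcal E$ measurably to $X$ by \cite[Lemma B.1]{LMK22}.

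With these pieces in place, \cite[Theorem 3.1]{LMK22} yields an approximator $\mathcal A$ and a trunk-net reconstructor $\mathcal R$ with $\|\mathscr G-\mathcal R\circ\mathcal A\circ\mathcal E\|_{L^2(\mu)}<\varepsilon$. Internally this uses the three-term split $T_1+T_2+T_3$ from the proof of Theorem~\ref{thm:error-decomposition}, with each term driven below $\varepsilon/3$ separately.

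\textbf{Main obstacle.} The hardest part is controlling the encoding error term $T_2$, which involves $\mathscr G\circ\mathcal D\circ\mathcal E-\mathscr G$, when $\mathscr G$ is merely measurable rather than continuous. Here I would follow \cite{LMK22}: approximate $\mathscr G$ in $L^2(\mu)$ by a continuous $\mathscr G_\delta$ using Lusin's theorem, noting that Radon regularity holds because $C(U)$ is Polish. Then apply continuity of $\mathscr G_\delta$ together with $\mathcal D\mathcal E f\to f$. This must be done carefully, because $\mathcal D\circ\mathcal E$ can push $\mu$ off its support. To handle that, use $\mu$-a.e. convergence together with uniform integrability coming from the second-moment bounds.
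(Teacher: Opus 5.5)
Your top-level argument is correct and is exactly the route the paper takes. You check that $\mu|_{C(\SSS^{d-1}\times\SSS^{d-1})}$ is a Borel probability measure and that $\mathscr G\in L^2(\mu)$ is Borel. Its restriction to $C(\SSS^{d-1}\times\SSS^{d-1})$ is Borel for the sup-norm topology simply because the inclusion into $X$ is continuous. You then realify the complex data and invoke [LMK22, Theorem~3.1] with the measurable extension of the point-evaluation encoder. Appendix~B of the paper only unpacks that theorem's proof.

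What is wrong is your description of what happens inside that theorem. Neither LMK22 nor the paper uses the split $T_1+T_2+T_3$ of Theorem~\ref{thm:error-decomposition} here, and there is no decoder or encoding error at all. The paper's proof runs as follows:
\begin{itemize}
\item it clips $\mathscr G$ to $\mathscr G_M$;
\item it takes a Lusin compact $K\subset C(\SSS^{d-1}\times\SSS^{d-1})$ on which $\mathscr G_M$ is continuous;
\item it replaces $\mathscr G_M$ by $P_\kappa\circ\mathscr G_M$, built from finitely many continuous modes;
\item it applies the Chen--Chen theorem on $K$ only and orthonormalizes the trunks;
\item it clips the branch output so that $\|\mathscr N(f)\|_{L^2(B)}\le 2M$ for every $f$.
\end{itemize}
The error off $K$ is then at most $3M\,\mu(X\setminus K)^{1/2}$. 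The network never needs to be accurate away from $K$, so the obstacle you identify ($\mathcal D\circ\mathcal E$ pushing $\mu$ off its support) does not arise. The second moment of $\mu$ plays no role in that argument.

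If your Step~3 and your obstacle paragraph are meant to stand as an independent proof, they have gaps.
\begin{itemize}
\item Dominated convergence for $\mathscr E_{\mathcal E}\to0$ with majorant $C\|f\|_\infty$ needs $\int\|f\|_\infty^2\,d\mu<\infty$. The $L^2$ second moment does not give this, and $\mathscr E_{\mathcal E}$ is not what controls $T_2$ anyway.
\item Lusin's theorem gives continuity of $\mathscr G_M|_K$ only. It does not give a continuous $\mathscr G_\delta$ that can be evaluated at $\mathcal D\mathcal E f\notin K$.
\item Second-moment bounds on $\mu$ give no uniform integrability for $\|\mathscr G_\delta(\mathcal D\mathcal E f)\|^2$.
\end{itemize}

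The fix is to extend $\mathscr G_M|_K$ to a continuous map on $(C(\SSS^{d-1}\times\SSS^{d-1}),\|\cdot\|_\infty)$ by Lemma~\ref{lem:dugundji}, then compose with the radial retraction onto the ball of radius $M$. Call this bounded continuous surrogate $\widetilde{\mathscr G}$. Then:
\begin{itemize}
\item $T_3\to0$ and $T_2\to0$ by bounded convergence, using $\mathcal D\mathcal E f\to f$ uniformly for a partition-of-unity decoder;
\item $T_1$ is handled by finite-dimensional ReLU approximation on a large ball plus clipping;
\item $\|\mathscr G-\widetilde{\mathscr G}\|_{L^2(\mu)}$ is small by the clipping and Lusin steps.
\end{itemize}
That yields a genuinely different, modular proof that avoids the Chen--Chen operator theorem at the cost of an extension step. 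It is not what the cited theorem does, however, and your sketch as written is incomplete.
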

	\begin{proof}
		See Appendix~\ref{app:qualitative-approximation}.
	\end{proof}
	
	Theorem~\ref{thm:deeponet-inverse-scattering-approx} is qualitative. We next derive quantitative estimates that explicitly separate the encoder, neural approximation, and reconstruction errors. For the quantitative analysis, we now specialize the error decomposition of Section~\ref{section3} using problem-adapted encoding, admissible lifting, and reconstruction mechanisms. 
	
	Before deriving the quantitative estimates, we clarify the choice of encoder. The qualitative approximation result above uses a point-evaluation encoder, whereas the quantitative analysis below uses a spectral coefficient encoder to exploit the analytic regularity of the far-field data. The linear synthesis operator $\mathcal S_L$ is used only to represent the orthogonal projection $P_L$ and quantify its truncation error, while the admissibility-preserving decoder $\mathcal D_L^{\mathrm{ad}}$ is used in the physical stability argument. In particular, $P_Lf$ need not itself be an admissible far-field pattern.
	\subsection{Quantitative error estimates}
	We now derive the quantitative encoding, reconstruction, and neural approximation estimates needed for the inverse-scattering error bound. We begin with the spectral approximation of the far-field data, which underlies the encoding error estimate.
		\begin{lemma}
		\label{lem:sph-harm-approx}
		For notational clarity, we state the angular approximation result in the
		three-dimensional spherical-harmonic notation.  The two-dimensional case is
		obtained in the same way by replacing the spherical-harmonic basis on
		$\mathbb S^2$ with the Fourier basis on
		$\mathbb S^1$.
		
		Let $P_L$ denote the orthogonal projection onto the tensor-product subspace spanned by spherical harmonics of degree $\le L$ in each variable.
		Then there exist constants $C,c>0$, independent of $f$ and $L$, such that, for every $f\in\mathcal X$,
		\begin{equation*}
			\|f-P_L f\|_{L^2(\mathbb{S}^{d-1}\times\mathbb{S}^{d-1})}
			\le Ce^{-cL}\,\|f\|_{\mathcal X},
		\end{equation*}
		where $\mathcal X$ is a fixed analytic class continuously embedded in $L^2(\mathbb S^{d-1}\times\mathbb S^{d-1})$ such that the tensor spherical-harmonic coefficients of every $f\in\mathcal X$ satisfy
		\begin{equation*}
			|\hat f_{l_1,m_1;l_2,m_2}|
			\le C\,\|f\|_{\mathcal X}e^{-c(l_1+l_2)},
			\qquad l_1,l_2\ge0.
		\end{equation*}
		Such exponential spherical-harmonic decay is standard for analytic
		classes; see, for example, \cite{VV18}.
	\end{lemma}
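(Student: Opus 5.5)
The plan is a direct Parseval computation. The tensor-product spherical harmonics $\{Y_{l_1}^{m_1}(\hat x)\,Y_{l_2}^{m_2}(\omega)\}$ form an orthonormal basis of $L^2(\mathbb S^2\times\mathbb S^2)$. The projection $P_L$ keeps exactly the indices with $l_1\le L$ and $l_2\le L$. Hence
\begin{equation*}
\|f-P_Lf\|_{L^2}^2=\sum_{\max\{l_1,l_2\}>L}\ \sum_{|m_1|\le l_1}\ \sum_{|m_2|\le l_2}|\hat f_{l_1,m_1;l_2,m_2}|^2 .
\end{equation*}
This is the only structural input. The rest of the argument is inserting the assumed coefficient decay and summing a geometric-type series.

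Insert $|\hat f_{l_1,m_1;l_2,m_2}|\le C\|f\|_{\mathcal X}e^{-c(l_1+l_2)}$. For fixed $(l_1,l_2)$ there are $(2l_1+1)(2l_2+1)$ choices of $(m_1,m_2)$, so
\begin{equation*}
\|f-P_Lf\|_{L^2}^2\le C^2\|f\|_{\mathcal X}^2\sum_{\max\{l_1,l_2\}>L}(2l_1+1)(2l_2+1)e^{-2c(l_1+l_2)} .
\end{equation*}
Split the index set into $\{l_1>L\}$ and $\{l_2>L\}$ and bound the sum by the sum over both pieces. Each piece factorizes. For $\{l_1>L\}$ one factor is $\sum_{l_2\ge0}(2l_2+1)e^{-2cl_2}$, a finite constant depending only on $c$. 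The other factor is the tail $\sum_{l>L}(2l+1)e^{-2cl}$.

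The tail is the only step needing care, because the polynomial multiplicity factor must be absorbed. For any $c'<c$ we have $(2l+1)e^{-2cl}\le C_{c'}e^{-2c'l}$ with $C_{c'}:=\sup_{l\ge0}(2l+1)e^{-2(c-c')l}<\infty$. Then
\begin{equation*}
\sum_{l>L}(2l+1)e^{-2cl}\le C_{c'}\frac{e^{-2c'(L+1)}}{1-e^{-2c'}}\lesssim e^{-2c'L}.
\end{equation*}
Taking square roots gives $\|f-P_Lf\|_{L^2}\le C'e^{-c'L}\|f\|_{\mathcal X}$. After renaming, say with $c'=c/2$, this is the claim with constants independent of $f$ and $L$. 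The statement explicitly allows the exponent to differ from the one in the decay hypothesis.

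For $d=2$ the same computation applies with the Fourier basis $e^{in\theta}$ on $\mathbb S^1$, indexed by $|n_1|,|n_2|$. The multiplicities are then $2$ rather than $2l+1$, so the argument is even simpler. The continuous embedding of $\mathcal X$ into $L^2$ is used only to make $\hat f$ and Parseval meaningful for $f\in\mathcal X$. The analyticity of far-field patterns, via \cite{VV18}, is needed only later to verify membership in $\mathcal X$, not in this lemma.
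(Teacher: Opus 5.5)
Your proposal is correct and follows the paper's proof essentially step for step. Both arguments apply Parseval over the indices with $\max\{l_1,l_2\}>L$, insert the coefficient decay, split into the two factorized sums over $\{l_1>L\}$ and $\{l_2>L\}$, and absorb the multiplicity $d_l=O(l^{d-2})$ into a slightly smaller exponent $c'<c$ before taking square roots; your explicit $C_{c'}=\sup_{l\ge0}(2l+1)e^{-2(c-c')l}$ simply makes the paper's constant $C_{\sigma'}$ concrete.
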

	
	\begin{proof}
		Expand $f$ in the tensor spherical-harmonic basis:
		\begin{equation*}
		f(\hat x,\omega)
		=\sum_{l_1,l_2\ge0}\sum_{m_1=1}^{d_{l_1}}\sum_{m_2=1}^{d_{l_2}}
		\hat f_{l_1,m_1;l_2,m_2}\,
		Y_{l_1,m_1}(\hat x)\,Y_{l_2,m_2}(\omega),
		\end{equation*}
		where $d_l=N(d,l)=O(l^{d-2})$.
		
		By the exponential-decay property, there exist constants $C_1,\sigma>0$, depending only on the fixed analytic class, such that
		\begin{equation}\label{eq:exp-decay}
			|\hat f_{l_1,m_1;l_2,m_2}|
			\le
			C_1\|f\|_{\mathcal X}
			e^{-\sigma(l_1+l_2)}.
		\end{equation}
		
		Since $P_L$ removes all terms with $l_1>L$ or $l_2>L$, Parseval's identity gives
		\begin{equation*}
		\|f-P_L f\|_{L^2}^2
		=\sum_{\substack{l_1,l_2\ge0\\ l_1>L\text{ or }l_2>L}}
		\sum_{m_1,m_2}|\hat f_{l_1,m_1;l_2,m_2}|^2.
		\end{equation*}
			Using \eqref{eq:exp-decay} and $d_l=O(l^{d-2})$, fix any
			$\sigma'\in(0,\sigma)$. Then
			\begin{equation*}
				\sum_{l>L} d_l e^{-2\sigma l}
				\le
				C\sum_{l>L}(1+l)^{d-2}e^{-2\sigma l}
				\le
				C_{\sigma'}e^{-2\sigma' L},
			\end{equation*}
			where $C_{\sigma'}>0$ is independent of $L$. Moreover,
			\begin{equation*}
				\sum_{l\ge0}d_l e^{-2\sigma l}<\infty.
			\end{equation*}
			Consequently,
			\begin{equation*}
				\begin{aligned}
					\|f-P_L f\|_{L^2}^2
					&\le
					C_1^2\|f\|_{\mathcal X}^2
					\left[
					\left(\sum_{l_1>L}d_{l_1}e^{-2\sigma l_1}\right)
					\left(\sum_{l_2\ge0}d_{l_2}e^{-2\sigma l_2}\right)
					\right.\\
					&\qquad\left.
					+
					\left(\sum_{l_1\ge0}d_{l_1}e^{-2\sigma l_1}\right)
					\left(\sum_{l_2>L}d_{l_2}e^{-2\sigma l_2}\right)
					\right] \\
					&\le
					C_2e^{-2\sigma' L}\|f\|_{\mathcal X}^2 .
				\end{aligned}
			\end{equation*}
			Taking square roots gives
			\begin{equation*}
				\|f-P_L f\|_{L^2}
				\le
				C e^{-\sigma' L}\|f\|_{\mathcal X}.
			\end{equation*}
			Thus the stated estimate follows with $c=\sigma'$.
	\end{proof}
	
	The preceding spectral approximation estimate yields the following encoding error bound under a uniform analytic regularity condition.
	\begin{theorem}[Spectral encoding error]
		\label{prop:encoder-error}
		Let $\mathcal X$ be the analytic class in
		Lemma~\ref{lem:sph-harm-approx}, and assume that there exists
		$M_{\mathcal X}>0$ such that
		\begin{equation*}
			\|f\|_{\mathcal X}\le M_{\mathcal X}
			\qquad \text{for }\mu\text{-a.e. } f.
		\end{equation*}
		Then there exist constants $C_1,c_1>0$ such that the spectral synthesis error satisfies
		\begin{equation*}
		\varepsilon_{m_L}^{\mathrm{spec}}
		:=
		\operatorname*{ess\,sup}_{f\in X}
		\|\mathcal S_L\mathcal E_L(f)-f\|_{L^2(\mathbb S^{d-1}\times\mathbb S^{d-1})}
		\le C_1 e^{-c_1 m_L^{1/(2(d-1))}} ,
		\end{equation*}
		where the essential supremum is taken with respect to $\mu$. Here $\mathcal E_L$ is the truncated spherical-harmonic coefficient encoder and $\mathcal S_L$ is the corresponding linear spectral synthesis operator. In particular, to guarantee $\varepsilon_{m_L}^{\mathrm{spec}}\le \varepsilon$, it suffices to choose the real encoder dimension $m_L$ such that
		\begin{equation*}
		m_L \ge \left(\frac{1}{c_1}\log\frac{C_1}{\varepsilon}\right)^{2(d-1)}.
		\end{equation*}
	\end{theorem}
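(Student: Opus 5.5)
The plan is to reduce the statement to Lemma~\ref{lem:sph-harm-approx} and then count the real dimension of the truncated tensor spherical-harmonic space.

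First, fix the definitions. The encoder $\mathcal E_L(f)$ collects the coefficients $\hat f_{l_1,m_1;l_2,m_2}$ with $l_1,l_2\le L$, with real and imaginary parts separated. The synthesis operator is $\mathcal S_L(c)=\sum c_{l_1,m_1;l_2,m_2}Y_{l_1,m_1}\otimes Y_{l_2,m_2}$, after recombining into complex coefficients. Then $\mathcal S_L\mathcal E_L=P_L$ exactly, as in Method~A of the abstract encoder principle (Theorem~\ref{thm:encoder-principle}). Hence for $\mu$-a.e.\ $f$, Lemma~\ref{lem:sph-harm-approx} and the uniform bound $\|f\|_{\mathcal X}\le M_{\mathcal X}$ give
\begin{equation*}
\|\mathcal S_L\mathcal E_L(f)-f\|_{L^2}\le C e^{-cL}M_{\mathcal X}.
\end{equation*}
Taking the essential supremum yields $\varepsilon^{\mathrm{spec}}_{m_L}\le CM_{\mathcal X}e^{-cL}$.

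Second, relate $L$ to $m_L$. The number of spherical harmonics of degree $\le L$ on $\mathbb S^{d-1}$ is $\sum_{l\le L}d_l$. This equals $(L+1)^2$ for $d=3$ and $2L+1$ for $d=2$ (Fourier modes), so in general it is $\asymp (L+1)^{d-1}$. The tensor product therefore has complex dimension $\asymp (L+1)^{2(d-1)}$, and the real dimension $m_L$ is twice that. This gives $m_L\le C'(L+1)^{2(d-1)}$, i.e.\ $L\ge (m_L/C')^{1/(2(d-1))}-1$. Substituting into the exponential bound gives
\begin{equation*}
\varepsilon^{\mathrm{spec}}_{m_L}\le CM_{\mathcal X}e^{c}\,e^{-c(C')^{-1/(2(d-1))}m_L^{1/(2(d-1))}},
\end{equation*}
which is the claim with $C_1=CM_{\mathcal X}e^{c}$ and $c_1=c(C')^{-1/(2(d-1))}$. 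Here $m_L$ ranges over the admissible dimensions $m_L=2\bigl(\sum_{l\le L}d_l\bigr)^2$.

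Third, the sufficiency statement follows by inverting the bound. The condition $C_1e^{-c_1m_L^{1/(2(d-1))}}\le\varepsilon$ is equivalent to $m_L^{1/(2(d-1))}\ge c_1^{-1}\log(C_1/\varepsilon)$, which is exactly the stated lower bound on $m_L$.

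The only delicate point is the dimension count. One must use an \emph{upper} bound $m_L\lesssim L^{2(d-1)}$, which is the direction that converts the decay $e^{-cL}$ into decay in $m_L$. One must also handle the $+1$ shift and the factor $2$ from real/imaginary splitting, both absorbed into the constants. I also need the measurability of $\mathcal E_L$ on $X$; this is immediate because $\mathcal E_L$ consists of bounded linear functionals (inner products with $Y_{l_1,m_1}\otimes Y_{l_2,m_2}$) on $L^2$.
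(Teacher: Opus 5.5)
Your proposal is correct and follows essentially the same route as the paper: you identify $\mathcal S_L\mathcal E_L=P_L$, apply Lemma~\ref{lem:sph-harm-approx} with the uniform bound $M_{\mathcal X}$, convert $L$ into $m_L$ through the dimension count $m_L\asymp L^{2(d-1)}$, and invert the resulting bound. Your explicit use of the one-sided estimate $m_L\le C'(L+1)^{2(d-1)}$, with the shift and the real/imaginary factor absorbed into $C_1$ and $c_1$, makes precise the step the paper writes only as $L\asymp m_L^{1/(2(d-1))}$.
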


	\begin{proof}
		Let
		\begin{equation*}
		r_L
		:=
		\left(\sum_{l=0}^{L}d_l\right)^2,
		\qquad
		m_L:=2r_L.
		\end{equation*}
		Using the identification $\mathbb C^{r_L}\simeq\mathbb R^{m_L}$
		obtained by separating real and imaginary parts, define
		\begin{equation*}
		\mathcal E_L(f)
		:=
		\bigl(
		\langle f,Y_{l_1,m_1}(\hat x)Y_{l_2,m_2}(\omega)\rangle
		\bigr)_{\max(l_1,l_2)\le L}
		\in\mathbb C^{r_L}\simeq\mathbb R^{m_L}.
		\end{equation*}
		For
		$c=(c_{l_1,m_1;l_2,m_2})
		\in\mathbb C^{r_L}\simeq\mathbb R^{m_L}$, define
		\begin{equation*}
		\mathcal S_L(c)
		:=
		\sum_{\max(l_1,l_2)\le L}
		c_{l_1,m_1;l_2,m_2}
		Y_{l_1,m_1}(\hat x)Y_{l_2,m_2}(\omega).
		\end{equation*}
		By orthonormality of the tensor-product spherical-harmonic basis,
		\begin{equation*}
		\mathcal S_L\mathcal E_L(f)
		=
		\sum_{\max(l_1,l_2)\le L}
		\langle f,Y_{l_1,m_1}Y_{l_2,m_2}\rangle
		Y_{l_1,m_1}Y_{l_2,m_2}
		=P_Lf.
		\end{equation*}
		Thus $\mathcal S_L\mathcal E_L$ realizes the orthogonal projection $P_L$. We emphasize that $\mathcal S_L$ is only a linear synthesis map; it is not the admissible physical decoder used in the stability argument below.

		The number of retained complex coefficients satisfies
		\begin{equation*}
		r_L
		=
		\left(\sum_{l=0}^{L}d_l\right)^2
		\asymp L^{2(d-1)};
		\end{equation*}
		see \cite{DX13}. Hence the real encoder dimension satisfies
		\begin{equation*}
		m_L=2r_L\asymp L^{2(d-1)},
		\qquad
		L\asymp m_L^{1/(2(d-1))}.
		\end{equation*}
		By Lemma~\ref{lem:sph-harm-approx} and the assumed uniform bound,
		\begin{equation*}
		\|\mathcal S_L\mathcal E_L(f)-f\|_{L^2}
		=\|P_Lf-f\|_{L^2}
		\le Ce^{-cL}\|f\|_{\mathcal X}
		\le C_1e^{-c_1L}
		\end{equation*}
		for $\mu$-a.e. $f$. Taking the essential supremum and substituting $L\asymp m_L^{1/(2(d-1))}$ gives the stated estimate.
	\end{proof}
	
    Having quantified the spectral encoding error, we next turn to the reconstruction component. The following lemma provides the polynomial approximation estimate underlying the spectral reconstruction error.
	\begin{lemma}\label{lem:ball-polynomial-approx}
		For notational clarity, we use spherical-harmonic--Jacobi notation on the
		$d$-dimensional ball.  When $d=2$, this is understood as the corresponding
		Fourier--Jacobi basis on the disk, obtained by replacing the angular spherical
		harmonics on $\mathbb S^1$ with Fourier modes.
		
		Let $q\in H^s(B)$ for some $s>0$.
		Suppose that $\{\phi_1,\phi_2,\dots\}$ is a real-valued orthonormal
		spherical-harmonic--Jacobi polynomial basis of $L^2(B;\mathbb R)$.
		For $n\in\mathbb N$, set
		\begin{equation*}
		V_{r_p}
		:=
		\operatorname{span}_{\mathbb C}\{\phi_1,\dots,\phi_{r_p}\}
		=
		\mathbb P_n(B),
		\end{equation*}
		where
		$
		r_p
		=
		\dim_{\mathbb C}\mathbb P_n(B)
		\asymp n^d,
		p:=2r_p
		$.
		
		Let $P_{r_p}:L^2(B)\to V_{r_p}$ be the complex
		orthogonal projection defined by
		\begin{equation*}
		P_{r_p}q
		=
		\sum_{k=1}^{r_p}
		\langle q,\phi_k\rangle_{L^2(B)}\,\phi_k.
		\end{equation*}
		Then there exists a constant $C=C(B,d,s)>0$ such that
		\begin{equation*}
		\|q-P_{r_p}q\|_{L^2(B)}
		\le
		Cp^{-s/d}\|q\|_{H^s(B)}.
		\end{equation*}
	\end{lemma}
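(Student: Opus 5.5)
The plan is to reduce the statement to the classical Jackson-type estimate for polynomial approximation on the ball in Sobolev norms. Two facts carry most of the argument. First, since $\{\phi_k\}$ is a real orthonormal basis of $L^2(B;\mathbb R)$, it is also a complex orthonormal basis of $L^2(B)$. Second, the first $r_p$ elements span exactly $\mathbb P_n(B)$. Together these show that $P_{r_p}$ is the complex orthogonal projection onto $\mathbb P_n(B)$, and the projection acts separately on the real and imaginary parts:
\begin{equation*}
P_{r_p}q=P_{r_p}(\operatorname{Re}q)+iP_{r_p}(\operatorname{Im}q).
\end{equation*}
It therefore suffices to treat real-valued $q\in H^s(B)$, using $\|\operatorname{Re}q\|_{H^s}+\|\operatorname{Im}q\|_{H^s}\le 2\|q\|_{H^s}$.

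For real $q$, orthogonality gives the best-approximation identity
\begin{equation*}
\|q-P_{r_p}q\|_{L^2(B)}=\inf_{v\in\mathbb P_n(B)}\|q-v\|_{L^2(B)}.
\end{equation*}
The key step is then to bound this error of best $L^2$ polynomial approximation on the ball by $Cn^{-s}\|q\|_{H^s(B)}$. I would invoke the standard result for orthogonal expansions on the ball; see Dai--Xu \cite{DX13}, or the Jackson inequality with moduli of smoothness on the ball. I would try two routes.

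\textbf{Route 1 (extension).} Extend $q$ via a bounded Sobolev extension operator $E:H^s(B)\to H^s(\mathbb R^d)$, with the extension supported in a larger cube $Q\supset B$. Approximate $Eq$ on $Q$ by tensor-product Legendre (or Chebyshev) polynomials of total degree $\le n$, using the known rate $n^{-s}$ in $L^2(Q)$. Then restrict to $B$.

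\textbf{Route 2 (weighted Sobolev).} Use the characterization of $H^s(B)$ inside the weighted Sobolev spaces associated with the spherical-harmonic--Jacobi eigenoperator. For that operator the coefficient decay yields the rate $n^{-s}$ directly. For non-integer $s$, Route 2 requires interpolation between integer orders.

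I expect this step to be the main obstacle: getting a clean, citable statement valid for all real $s>0$ rather than only integer $s$. Route 1 handles all $s$ uniformly, since it needs only the boundedness of the extension and the Jackson estimate on a cube. I would therefore try Route 1 first.

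Finally, convert the degree $n$ into the rank $p$. Since $r_p=\dim_{\mathbb C}\mathbb P_n(B)=\binom{n+d}{d}\asymp n^d$ and $p=2r_p$, we have $n\asymp p^{1/d}$, and hence
\begin{equation*}
n^{-s}\le C p^{-s/d}.
\end{equation*}
Combining this with the real/imaginary splitting gives
\begin{equation*}
\|q-P_{r_p}q\|_{L^2(B)}\le Cp^{-s/d}\|q\|_{H^s(B)},
\end{equation*}
with $C$ depending only on $B$, $d$, and $s$.
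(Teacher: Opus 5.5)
Your proposal is correct and follows the same argument as the paper. Both use the best-approximation property of $P_{r_p}$, a Jackson-type bound $Cn^{-s}\|q\|_{H^s(B)}$ applied separately to $\operatorname{Re}q$ and $\operatorname{Im}q$, and the conversion $p=2r_p\asymp n^d$. The only difference is that the paper cites the Jackson inequality on the ball from \cite{DL93}, whereas your Route~1 (Sobolev extension to a cube followed by tensor-product Legendre approximation, using $\mathbb{Q}_{\lfloor n/d\rfloor}\subset\mathbb{P}_n$) is a valid self-contained justification for all real $s>0$.
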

	
	\begin{proof}
		The orthogonal projection $P_{r_p}$ satisfies the best-approximation
		property
		\begin{equation*}
		\|q-P_{r_p}q\|_{L^2(B)}
		=
		\inf_{v\in V_{r_p}}\|q-v\|_{L^2(B)}.
		\end{equation*}
		Applying the Jackson-type inequality on the ball \cite{DL93}
		separately to $\operatorname{Re}q$ and $\operatorname{Im}q$ gives
		\begin{equation*}
		\|q-P_{r_p}q\|_{L^2(B)}
		\le
		C_1n^{-s}\|q\|_{H^s(B)},
		\end{equation*}
		where $C_1=C_1(B,d,s)$. Since $p=2r_p\asymp n^d$, there
		exist constants $C_2',C_2>0$, depending only on $d$, such that
		\begin{equation*}
		C_2'n^d\le p\le C_2n^d.
		\end{equation*}
		Consequently,
		\begin{equation*}
		n^{-s}\le C_2^{s/d}p^{-s/d},
		\end{equation*}
		and the stated estimate follows after absorbing $C_2^{s/d}$ into the constant.
	\end{proof}
	
	The preceding polynomial approximation estimate yields the following reconstruction error bound.
	\begin{theorem}[Spectral reconstruction error]\label{thm:inverse-scattering-reconstruction}
		Assume that the pushforward measure 
		$\mathscr{G}_{\#}\mu$ is supported on $H^s(B)$ and satisfies
			$\|q\|_{H^s(B)}\le C_q$ for all
			$q\in\operatorname{supp}(\mathscr{G}_{\#}\mu)$.
		
			Let $r_p$ be the number of retained complex reconstruction modes
			and set $p:=2r_p$. Suppose that the reconstructor
			$\mathcal{R}_{\mathrm{SH}}$ and the projector $\mathcal P$ are chosen
			so that
			\begin{equation*}
				\mathcal R_{\mathrm{SH}}\circ\mathcal P=P_{r_p},
			\end{equation*}
			where $P_{r_p}$ is the complex orthogonal projection onto
			\begin{equation*}
				V_{r_p}
				=
				\operatorname{span}_{\mathbb C}
				\{\phi_1,\dots,\phi_{r_p}\}
				=
				\mathbb P_n(B),
				\qquad
				r_p\asymp n^d.
			\end{equation*}
			Here $\{\phi_k\}_{k=1}^{r_p}$ are the first $r_p$ real-valued
			spherical-harmonic--Jacobi basis functions described above.
		
		Then there exists a constant $C=C(B,d,s)>0$ such that
			\begin{equation*}
				\mathscr{E}_{\mathcal{R}}
				\le C\,C_q\,p^{-s/d}.
			\end{equation*}
			In particular, to guarantee
			$\mathscr E_{\mathcal R}\le \varepsilon$, it suffices to choose the
			even real reconstruction dimension $p$ such that
			\begin{equation*}
				p \ge
				\left(\frac{C\,C_q}{\varepsilon}\right)^{d/s}.
			\end{equation*}
		
	\end{theorem}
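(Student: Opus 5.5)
The plan is to combine the abstract reconstruction principle with Lemma~\ref{lem:ball-polynomial-approx}, using the uniform $H^s$ bound on the support of the pushforward measure. First, since $\mathcal R_{\mathrm{SH}}\circ\mathcal P=P_{r_p}$, the abstract reconstruction principle of Subsection~\ref{subsec:reconstruction} applies with $\Pi_p^Y=P_{r_p}$. It gives the identity
\begin{equation*}
\mathscr E_{\mathcal R}^2=\int_{L^2(B)}\|q-P_{r_p}q\|_{L^2(B)}^2\,d(\mathscr G_\#\mu)(q).
\end{equation*}
Since $\mathscr G_\#\mu$ is a probability measure, we may restrict the integral to $\operatorname{supp}(\mathscr G_\#\mu)$. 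This restriction does not change the value, because the complement of the support is a null set. (The Borel set on which the integrand is controlled has full measure; if needed, intersect it with $H^s(B)$, which carries the measure by assumption.)

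Second, for each $q$ in the support we have $q\in H^s(B)$ with $\|q\|_{H^s(B)}\le C_q$. Lemma~\ref{lem:ball-polynomial-approx}, applied with the same $n$, $r_p\asymp n^d$ and $p=2r_p$, then yields $\|q-P_{r_p}q\|_{L^2(B)}\le C p^{-s/d}C_q$ pointwise on the support, with $C=C(B,d,s)$ independent of $q$ and $p$. Integrating this uniform bound against the probability measure gives $\mathscr E_{\mathcal R}^2\le C^2C_q^2p^{-2s/d}$. Taking square roots gives the main estimate.

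Third, for the sufficiency statement, we require $C C_q p^{-s/d}\le\varepsilon$. Solving for $p$ gives $p\ge (CC_q/\varepsilon)^{d/s}$. Here $p$ must be an admissible even dimension of the form $2\dim\mathbb P_n(B)$, so one takes the smallest such $p$ exceeding this threshold. The estimate of Lemma~\ref{lem:ball-polynomial-approx} holds for every such $p$, so this choice suffices.

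The only delicate point is measure-theoretic: checking that the integrand is Borel measurable on $L^2(B)$ and that the support lies in $H^s(B)$ up to a null set. The integrand is continuous on $L^2(B)$ because $P_{r_p}$ is a bounded linear operator. The support condition is given directly by the hypothesis. Everything else is a direct substitution, so I expect no serious obstacle.
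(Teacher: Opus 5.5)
Your proposal is correct and follows essentially the paper's proof. You write $\mathscr E_{\mathcal R}^2$ as the mean-square error of $P_{r_p}$ under $\mathscr G_\#\mu$, apply Lemma~\ref{lem:ball-polynomial-approx} uniformly using $\|q\|_{H^s(B)}\le C_q$, integrate against the probability measure, and take square roots. The only difference is that the paper also writes out an explicit real-linear $\mathcal P$ and $\mathcal R_{\mathrm{SH}}$ realizing $\mathcal R_{\mathrm{SH}}\circ\mathcal P=P_{r_p}$, whereas you take this factorization from the hypothesis, which is legitimate as the statement is phrased.
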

	
	\begin{proof}
			For $q\in L^2(B)$, define the bounded real-linear
			projector $\mathcal P:L^2(B)\to\mathbb R^p$ by
			\begin{equation*}
				\mathcal P(q)
				:=
				\bigl(
				\operatorname{Re}\langle q,\phi_1\rangle,\dots,
				\operatorname{Re}\langle q,\phi_{r_p}\rangle,
				\operatorname{Im}\langle q,\phi_1\rangle,\dots,
				\operatorname{Im}\langle q,\phi_{r_p}\rangle
				\bigr).
			\end{equation*}
			For $(a,b)\in\mathbb R^{r_p}\times\mathbb R^{r_p}
			\simeq\mathbb R^p$, define
			\begin{equation*}
				\mathcal R_{\mathrm{SH}}(a,b)
				:=
				\sum_{k=1}^{r_p}(a_k+ib_k)\phi_k.
			\end{equation*}
			Thus,
			\begin{equation*}
				\mathcal R_{\mathrm{SH}}\circ\mathcal P(q)
				=
				\sum_{k=1}^{r_p}
				\langle q,\phi_k\rangle_{L^2(B)}\phi_k
				=
				P_{r_p}q.
			\end{equation*}
			By Lemma~\ref{lem:ball-polynomial-approx},
			\begin{equation*}
				\|q-P_{r_p}q\|_{L^2(B)}
				\le C\,p^{-s/d}\|q\|_{H^s(B)}
				\le C\, C_q\, p^{-s/d}.
			\end{equation*}
			Thus,
			\begin{equation*}
				\mathscr{E}_{\mathcal{R}}^2
				= \int_Y \|q-P_{r_p}q\|_{L^2(B)}^2
				\,d(\mathscr{G}_{\#}\mu)(q)
				\le C^2\, C_q^2\, p^{-2s/d}\int_Y d(\mathscr{G}_{\#}\mu)(q)
				= C^2 C_q^2 p^{-2s/d}.
			\end{equation*}
			Taking square roots gives
			\begin{equation*}
				\mathscr{E}_{\mathcal{R}} \le C\, C_q\, p^{-s/d}.
			\end{equation*}
			
	\end{proof}
	
	We next state the logarithmic stability estimate for the inverse scattering problem that will be used in the subsequent quantitative analysis. Let $B_1:=\{x\in\mathbb R^3:|x|<1\}$ denote the unit ball.
	\begin{lemma}[\cite{HH01}]\label{lem:ln}
		Let $s>3/2$, $C_q>0$, and
		$0<\epsilon<\frac{s}{s+3}$ be fixed. Then there exists a positive constant $C$ (depending only on $s$, $\epsilon$, $k$, and $C_q$) such that for all
		$q,\widetilde q\in H^s(\mathbb R^3)$ satisfying
		\begin{equation*}
			\|q\|_{H^s},\|\widetilde q\|_{H^s}\le C_q,
			\qquad
			\operatorname{supp}(q),\operatorname{supp}(\widetilde q)\subset B_1,
			\qquad
			\operatorname{Im}q,\operatorname{Im}\widetilde q\ge0
			\quad\text{a.e.},
		\end{equation*}
		the following stability estimate holds:
		\begin{equation*}
		\|q - \widetilde{q}\|_{L^2(B_1)} \leq C \left[ -\ln^{-} \left( \|u_q^\infty - u_{\widetilde{q}}^\infty\|_{L^2(\mathbb{S}^2 \times \mathbb{S}^2)} \right) \right]^{-\frac{s}{s+3} + \epsilon},
		\end{equation*}
		where $u_q^\infty$ and $u_{\widetilde{q}}^\infty$ denote the far-field patterns corresponding to $q$ and $\widetilde{q}$, respectively. Here $\ln^{-}(0):=-\infty$, while $\ln^{-}(t):=\ln(t)$ for $0<t\leq\exp(-1)$ and $\ln^{-}(t):=-1$ for $t>\exp(-1)$, with $[-\ln^{-}(0)]^{-\alpha}:=0$ for $\alpha>0$.
	\end{lemma}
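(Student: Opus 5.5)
This statement is the logarithmic stability estimate of H\"ahner and Hohage, cited from \cite{HH01}; in the paper it serves as an imported input rather than something proved here. If I had to reconstruct it, I would follow the classical Alessandrini/Stefanov scheme, adapted to far-field data: an integral identity, complex geometrical optics (CGO) solutions, and a frequency-splitting argument. I describe these steps in order below.

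\textbf{Step 1: integral identity.} Set $\delta:=\|u_q^\infty-u_{\widetilde q}^\infty\|_{L^2(\mathbb S^2\times\mathbb S^2)}$. I would derive an identity of the form
\begin{equation*}
\int_{B_1}(q-\widetilde q)\,u\,\widetilde u\,dx=\text{(boundary terms)}
\end{equation*}
for solutions $u$ and $\widetilde u$ of the two Helmholtz equations on a slightly larger ball $B_\rho$. Since the far field determines the near field, the boundary terms are controlled by $\delta$. This transfer from far to near field is itself only logarithmically stable in general. However, one can instead control the boundary terms by $e^{C|\zeta|}\delta$ through Herglotz-wave approximation of the CGO solutions, where $\zeta$ is the CGO parameter. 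This mirrors the exponential amplification in the Calder\'on case.

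\textbf{Step 2: CGO solutions and a Fourier bound.} Next I would take Sylvester--Uhlmann CGO solutions $u=e^{i\zeta\cdot x}(1+\psi)$ with $\zeta\cdot\zeta=k^2$, $|\zeta|\sim t$. The $H^s$ bound on $q$ with $s>3/2$ gives $\|\psi\|\lesssim t^{-1}$. Choosing $\zeta+\widetilde\zeta=\xi$ then yields
\begin{equation*}
|\widehat{(q-\widetilde q)}(\xi)|\lesssim e^{Ct}\delta+t^{-1}\qquad\text{for }|\xi|\le t.
\end{equation*}

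\textbf{Step 3: frequency splitting and optimization.} I would split $\|q-\widetilde q\|_{L^2}$ into low frequencies $|\xi|\le\rho$ and high frequencies. The low part is bounded by $\rho^{3}(e^{Ct}\delta+t^{-1})^2$. For the high part, interpolating between the $L^2$ tail and the $H^s$ bound gives $\rho^{-2s}C_q^2$. Optimizing, I would take $t\sim|\ln\delta|$ and $\rho\sim t^{1/(s+3)}$ (up to $\epsilon$-losses). This produces the rate $|\ln\delta|^{-s/(s+3)+\epsilon}$, where the $\epsilon$ absorbs the logarithmic factors. For $\delta>e^{-1}$ the bound is trivial, because $\|q-\widetilde q\|\le 2C_q$.

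\textbf{Main obstacle.} The hard step is linking far-field data to the identity in Step 1 with only exponential (not worse) amplification in $t$. This requires approximating CGO solutions on $B_1$ by Herglotz wave functions with densities of norm $\lesssim e^{Ct}$, which is where the absorbing condition $\operatorname{Im}q\ge0$ enters, through well-posedness and uniqueness of the direct problem. In the present paper I would simply invoke \cite{HH01} for this step.
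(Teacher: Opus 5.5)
The paper gives no proof of this lemma; it is imported from \cite{HH01}. Insofar as your proposal ends by invoking that reference, it does exactly what the paper does. Your reconstruction, however, breaks at the step you call the main obstacle. Densities of norm $\lesssim e^{Ct}$ are not what Herglotz approximation delivers, and already for the model function $e^{i\zeta\cdot x}$ they are impossible:
\begin{itemize}
\item A Herglotz wave $\int_{\mathbb S^2}e^{ikx\cdot d}g(d)\,ds(d)$ has coefficient $4\pi i^l g_{lm}$ against $j_l(k|x|)Y_l^m(\hat x)$.
\item By the analytically continued Jacobi--Anger expansion, $e^{i\zeta\cdot x}$ has coefficient $4\pi i^l\,\overline{Y_l^m}(\zeta/k)$, with $\overline{Y_l^m}$ extended as a homogeneous harmonic polynomial. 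For $|\zeta|\sim t\gg k$ some of these are of size $(ct/k)^l$. For example, $(x_1-ix_2)^l=e^{l\tau}$ at $\zeta/k=(\cosh\tau,i\sinh\tau,0)$, where $|\zeta|\approx ke^{\tau}/\sqrt2$.
\item The partner CGO factor has size $e^{ct}$ on $B_1$, so you need accuracy of order $e^{-ct}$ there. But the modes with $l$ up to a fixed multiple of $t$ have $L^2(B_1)$-size roughly $(c't/l)^l\gg e^{-ct}$.
\end{itemize}
So $g$ must essentially reproduce those coefficients, which forces $\|g\|_{L^2(\mathbb S^2)}\ge\exp(c\,t\ln t)$.

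Along this route, Step~2 can only be expected to give $|\widehat{(q-\widetilde q)}(\xi)|\lesssim e^{Ct\ln t}\delta+t^{-1}$. This forces $t\sim|\ln\delta|/\ln|\ln\delta|$, and the resulting power of $\ln|\ln\delta|$ is the kind of loss an $\epsilon$ in the exponent absorbs. Going through the near field gives the same loss. With the a~priori bounds available here, the near-field difference on a sphere outside $B_1$ is bounded by $\exp(-c|\ln\delta|/\ln|\ln\delta|)$. That is far better than the logarithmic modulus you mention in Step~1, but it is still not of size $\delta$.

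Step~3 is consequently not a derivation of the stated rate. With your bound $e^{Ct}\delta+t^{-1}$ and $t\sim|\ln\delta|$, the first term is a positive power of $\delta$, so there is no logarithmic factor for $\epsilon$ to absorb. Moreover, $\rho\sim t^{1/(s+3)}$ does not minimize $\rho^3t^{-2}+\rho^{-2s}$. The minimizer is $\rho\sim t^{2/(2s+3)}$, which would give $|\ln\delta|^{-2s/(2s+3)}$ with no $\epsilon$ at all. The claimed estimate then follows only as a weaker consequence, because $\frac{s}{s+3}<\frac{2s}{2s+3}$; your choice of $\rho$ is tuned to the target rather than derived from the bounds.

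In short, both the exponent $\frac{s}{s+3}$ and the $\epsilon$-loss come from \cite{HH01}, not from your sketch. This is harmless for the paper, which uses the lemma only as an imported input. But then your proposal should simply be the citation, without a heuristic that claims purely exponential amplification.
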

	
	For the quantitative estimates based on Lemma~\ref{lem:ln}, we now restrict the inverse problem to the admissible class
	\begin{equation*}
		\mathcal Q_{\mathrm{ad}}
		:=
		\left\{
		q\in H^s(\mathbb R^3):
		\operatorname{supp}(q)\subset B_1,\,
		\|q\|_{H^s}\le C_q,\,
		\operatorname{Im}q\ge0\ \text{a.e.}
		\right\},
		\qquad s>\frac32.
	\end{equation*}
	Let
	\begin{equation*}
	\Psi:\mathcal Q_{\mathrm{ad}}\to L^2(\mathbb S^2\times\mathbb S^2),
	\qquad
	\Psi(q)=u_q^\infty,
    \end{equation*}
	and denote the corresponding admissible far-field data set by
	\begin{equation*}
	D_G:=\Psi(\mathcal Q_{\mathrm{ad}}).
	\end{equation*}
	On this set, the physical inverse map is
	\begin{equation*}
	G:D_G\to L^2(B_1),
	\qquad
	G(u_q^\infty)=q .
	\end{equation*}
	Thus Lemma~\ref{lem:ln} gives a logarithmic stability estimate for $G$ on $D_G$, with exponent $\hat s:=\frac{s}{s+3}-\epsilon \in (0,1)$.
	
	In the quantitative arguments below, the logarithmic stability estimate is applied only to pairs of admissible far-field data in $D_G$. Since a finite-dimensional spectral projection of the far-field data need not itself belong to the physical data set, we introduce an admissibility-preserving lifting mechanism that allows the stability estimate to be applied to finite-dimensional encoded representations.

	\paragraph{Prior-induced admissible decoder.}
	Equip $\mathcal Q_{\mathrm{ad}}$ with a fixed separable metric topology for which the inclusion
	$\mathcal Q_{\mathrm{ad}}\hookrightarrow L^2(B_1)$ and $\Psi$ are continuous. Let $\nu$ be a Borel probability measure on this space and assume that
	\[
	\mathcal Q_{\mathrm{pr}}:=\operatorname{supp}\nu
	\]
	is compact. The set $\mathcal Q_{\mathrm{pr}}$ represents the admissible prior class considered in the quantitative analysis. We set
	\[
	\mu:=\Psi_{\#}\nu .
	\]

		Since $\mu$ is supported on
		$\Psi(\mathcal Q_{\mathrm{pr}})\subset D_G$, we choose
		$\mathscr G$ to agree with the physical inverse map $G$ on
		$\Psi(\mathcal Q_{\mathrm{pr}})$. Hence all quantitative
		$L^2(\mu)$ errors are independent of the values of $\mathscr G$ outside this set.
	For the
	coefficient encoder $\mathcal E_L$, define
	\[
	K_L := (\mathcal E_L\circ \Psi)(\mathcal Q_{\mathrm{pr}}).
	\]
	By compactness and continuity, $K_L$ is compact and coincides with
	$\operatorname{supp}((\mathcal E_L)_{\#}\mu)$. Let
	\[
	F_L := \mathcal E_L\circ \Psi : \mathcal Q_{\mathrm{pr}}\to K_L .
	\]
	For every $z\in K_L$, the fiber $F_L^{-1}(z)$ is nonempty and compact, and
	the graph
	\[
	\Gamma_L
	:=
	\{(z,q)\in K_L\times \mathcal Q_{\mathrm{pr}}: F_L(q)=z\}
	\]
	is closed. Since $F_L$ is continuous and $\mathcal Q_{\mathrm{pr}}$ is a compact metric space, the multifunction $z\mapsto F_L^{-1}(z)$ is weakly measurable. Hence, by the Kuratowski--Ryll-Nardzewski measurable selection
	theorem, there exists a Borel measurable selector
	\[
	s_L:K_L\to \mathcal Q_{\mathrm{pr}}
	\]
	such that
	\[
	F_L(s_L(z))=z,\qquad z\in K_L .
	\]
	We write $q_z:=s_L(z)$ and define the prior-induced admissible decoder
	\[
	\mathcal D_L^{\mathrm{ad}}(z):=\Psi(q_z)=u^\infty_{q_z}.
	\]
	Thus
	\[
	\mathcal D_L^{\mathrm{ad}}(K_L)
	\subset \Psi(\mathcal Q_{\mathrm{pr}})
	\subset D_G .
	\]
	This is the decoder used in the physical stability argument. The linear
	spectral synthesis operator $\mathcal S_L$ from Theorem~\ref{prop:encoder-error} is used only to express
	$P_L f=\mathcal S_L\mathcal E_L f$ and to quantify the spectral truncation error; it is
	not inserted into the physical inverse map $G$. No continuity or Lipschitz
	continuity of $\mathcal D_L^{\mathrm{ad}}$ is assumed or used.
	
	\begin{lemma}[Spectral accuracy of the prior-induced admissible decoder]
	\label{lem:admissible-lifting}
	Assume that $\mathcal Q_{\mathrm{pr}}\subset \mathcal Q_{\mathrm{ad}}$ and that the prior-generated far-field manifold satisfies the uniform analytic bound
	\begin{equation*}
	\sup_{q\in\mathcal Q_{\mathrm{pr}}}\|\Psi(q)\|_{\mathcal X}\le M_{\mathcal X}^{\mathrm{pr}}.
	\end{equation*}
	Let $\mathcal D_L^{\mathrm{ad}}:K_L\to D_G$ be the prior-induced admissible decoder defined above. Then $\mathcal D_L^{\mathrm{ad}}(K_L)\subset D_G$, and
	\begin{equation*}
		\varepsilon_m^{\mathrm{ad}}
		:=
		\operatorname*{ess\,sup}_{f\in\Psi(\mathcal Q_{\mathrm{pr}})}
		\left\|
		\mathcal D_L^{\mathrm{ad}}\mathcal E_L(f)-f
		\right\|_{L^2(\mathbb S^2\times\mathbb S^2)}
		\le Ce^{-cL},
	\end{equation*}
	where the essential supremum is taken with respect to
	$\mu|_{\Psi(\mathcal Q_{\mathrm{pr}})}$.
	In particular, in three dimensions, $m_L\asymp L^4$ and therefore
	\begin{equation*}
	\varepsilon_m^{\mathrm{ad}}\le Ce^{-c m_L^{1/4}}.
	\end{equation*}
	\end{lemma}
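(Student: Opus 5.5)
The inclusion $\mathcal D_L^{\mathrm{ad}}(K_L)\subset D_G$ is immediate from the construction: $s_L$ takes values in $\mathcal Q_{\mathrm{pr}}\subset\mathcal Q_{\mathrm{ad}}$, so $\mathcal D_L^{\mathrm{ad}}(z)=\Psi(q_z)\in\Psi(\mathcal Q_{\mathrm{ad}})=D_G$. For the accuracy bound, I would not use any continuity of the decoder. Instead I would compare $f$ and $\mathcal D_L^{\mathrm{ad}}\mathcal E_L(f)$ through their common spectral projection. Fix $f\in\Psi(\mathcal Q_{\mathrm{pr}})$ and set $z:=\mathcal E_L(f)\in K_L$ and $g:=\mathcal D_L^{\mathrm{ad}}(z)=\Psi(q_z)$. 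Since $F_L(q_z)=z$, we have $\mathcal E_L(g)=z=\mathcal E_L(f)$. Applying the synthesis operator $\mathcal S_L$ from Theorem~\ref{prop:encoder-error} and using $\mathcal S_L\mathcal E_L=P_L$ gives the key identity
\begin{equation*}
P_Lg=P_Lf .
\end{equation*}
In words, the selected admissible far field and the true far field share all tensor spherical-harmonic coefficients up to degree $L$.

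Next I would insert $P_Lf=P_Lg$ into the triangle inequality:
\begin{equation*}
\|g-f\|_{L^2(\mathbb S^2\times\mathbb S^2)}
\le \|g-P_Lg\|_{L^2}+\|P_Lf-f\|_{L^2}.
\end{equation*}
Both $f$ and $g$ lie in $\Psi(\mathcal Q_{\mathrm{pr}})$, so both satisfy $\|\cdot\|_{\mathcal X}\le M_{\mathcal X}^{\mathrm{pr}}$. Lemma~\ref{lem:sph-harm-approx} then bounds each term by $Ce^{-cL}M_{\mathcal X}^{\mathrm{pr}}$, so $\|g-f\|\le 2CM_{\mathcal X}^{\mathrm{pr}}e^{-cL}$. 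This holds for every $f$ in $\Psi(\mathcal Q_{\mathrm{pr}})$, so the essential supremum with respect to $\mu|_{\Psi(\mathcal Q_{\mathrm{pr}})}$ obeys the same bound.

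For the dimension count, the proof of Theorem~\ref{prop:encoder-error} gives $m_L=2r_L\asymp L^{2(d-1)}=L^4$ when $d=3$. Hence $L\gtrsim m_L^{1/4}$, and after adjusting $c$ we get $\varepsilon_m^{\mathrm{ad}}\le Ce^{-cm_L^{1/4}}$.

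The main point to watch is the identity $\mathcal E_L(g)=\mathcal E_L(f)$. It depends on the selector satisfying $F_L(s_L(z))=z$ exactly, and on $\mathcal E_L$ being the genuine coefficient map on $\Psi(\mathcal Q_{\mathrm{pr}})\subset C\subset L^2$, not merely some measurable extension. Since $\mu$ is concentrated on $\Psi(\mathcal Q_{\mathrm{pr}})$, this causes no difficulty. The uniform analytic bound on the whole prior manifold is essential, because $g$ is generally not $f$. For the same reason the bound only uses $f,g$ in the prior class, not in $D_G$.
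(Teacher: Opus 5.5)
Your proposal is correct and follows essentially the same argument as the paper. Both use admissibility from the selector's range, then $\mathcal E_L g=\mathcal E_L f$ hence $P_Lg=P_Lf$, then the triangle inequality through the common projection with the spectral-tail bound applied to both $f$ and $g$ via the uniform analytic bound on $\Psi(\mathcal Q_{\mathrm{pr}})$, and finally $m_L\asymp L^4$.
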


	\begin{proof}
	The inclusion $\mathcal D_L^{\mathrm{ad}}(K_L)\subset D_G$ follows immediately from the construction: $\mathcal D_L^{\mathrm{ad}}(z)=\Psi(q_z)$ with $q_z\in \mathcal Q_{\mathrm{pr}}\subset\mathcal Q_{\mathrm{ad}}$.
	Let $f\in\Psi(\mathcal Q_{\mathrm{pr}})$. Then
	$f=\Psi(q)$ for some $q\in\mathcal Q_{\mathrm{pr}}$, and hence
	\begin{equation*}
		\mathcal E_L(f)
		=
		(\mathcal E_L\circ\Psi)(q)
		\in K_L.
	\end{equation*}
	Therefore,
	\begin{equation*}
		g:=\mathcal D_L^{\mathrm{ad}}\mathcal E_L(f)
	\end{equation*}
	is well defined.
	By construction, $g\in D_G$ and $\mathcal E_L g=\mathcal E_L f$. Hence $f$ and $g$ have the same spherical-harmonic coefficients up to degree $L$, so $P_Lg=P_Lf$. Consequently,
	\begin{equation*}
	\|g-f\|_{L^2}
	\le \|g-P_Lg\|_{L^2}+\|P_Lf-f\|_{L^2}.
	\end{equation*}
	The spectral-tail estimate in Lemma~\ref{lem:sph-harm-approx}, together with the uniform analytic bound on $\Psi(\mathcal Q_{\mathrm{pr}})$, gives
	\begin{equation*}
	\|g-P_Lg\|_{L^2}+\|P_Lf-f\|_{L^2}\le Ce^{-cL}.
	\end{equation*}
	Taking the essential supremum with respect to $\mu=\Psi_{\#}\nu$ proves the estimate. Since $m_L\asymp L^4$ for $d=3$, the final bound follows.
	\end{proof}
	
The following quantitative estimates provide a problem-specific admissible-lifting realization of the abstract framework in Section~\ref{section3}. The admissible decoder is used only in the stability proof and is not an additional computational module in the implemented network.
	
	\begin{theorem}[Quantitative ReLU approximation of the admissibly lifted inverse map]
	\label{thm:A-approximation}
	Under the assumptions of Lemma~\ref{lem:admissible-lifting},
	we specialize to the three-dimensional setting of the logarithmic stability estimate used below. Fix $L,p\in\mathbb N$. Let
	\begin{equation*}
	G:D_G\to L^2(B_1)
	\end{equation*}
	be the physical inverse map on the admissible far-field data set $D_G$, which satisfies the logarithmic stability estimate in
	Lemma~\ref{lem:ln}. Let
	\begin{equation*}
	\mathcal D_L^{\mathrm{ad}}:K_L\to D_G
	\end{equation*}
	be the prior-induced admissible decoder defined above, let
	$\mathcal P:L^2(B_1)\to\mathbb R^p$ be a bounded
	real-linear map, and define
	\begin{equation*}
	H_L:=\mathcal P\circ G\circ\mathcal D_L^{\mathrm{ad}}:K_L\to\mathbb R^p.
	\end{equation*}
	Then, for every $\eta\in(0,1)$, there exists a ReLU neural network $\mathcal A_\eta:\mathbb R^{m_L}\to\mathbb R^p$ such that
	\begin{equation*}
	\|\mathcal A_\eta-H_L\|_{L^2(K_L,(\mathcal E_L)_{\#}\mu)}
	\le C\eta+CL^{-\hat s},
	\end{equation*}
	and
	\begin{equation*}
	\mathrm{size}(\mathcal A_\eta)
	\le c_0(p+1)\exp \bigl(c_1m_L\eta^{-1/\hat s}\bigr),
	\qquad
	\mathrm{depth}(\mathcal A_\eta)
	\le c_2+c_3m_L\eta^{-1/\hat s}.
	\end{equation*}
	The constants $c_0,c_1,c_2,c_3>0$ may depend on $L$, $p$, and the fixed data in the assumptions, but are independent of $\eta$ and $N$. The constant $C>0$ in the approximation-error bounds is independent of $L$, $\eta$, and $N$, but may depend on the operator norm $\|\mathcal P\|$ and on the fixed data in the assumptions. Set
	\begin{equation*}
	N_{0,L}:=c_0(p+1)\exp(c_1m_L).
	\end{equation*}
	Then, for every $N> N_{0,L}$,
	\begin{equation}\label{eq:scattering-approximation-error}
	\mathscr E_{\mathcal A}(N)
	\le
	C\left[
	\frac{1}{c_1m_L}
	\log\!\left(\frac{N}{c_0(p+1)}\right)
	\right]^{-\hat s}
	+Cm_L^{-\hat s/4},
	\end{equation}
	where $\mathscr E_{\mathcal A}(N)$ is defined with target map $H_L$ and latent set $K_L$.
	\end{theorem}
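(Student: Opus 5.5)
The plan is to repeat the proof of Theorem~\ref{thm:abstract-approximation}, with one change. Here $\mathcal D_L^{\mathrm{ad}}$ is only Borel measurable and is not Lipschitz, so the Lipschitz bound on $\mathcal D$ used there is unavailable. I would replace it with a \emph{quasi-Lipschitz} estimate that holds up to an exponentially small spectral defect. This defect is the source of the additive term $CL^{-\hat s}$.

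The key step is as follows. For $z_1,z_2\in K_L$, set $g_i:=\mathcal D_L^{\mathrm{ad}}(z_i)=\Psi(q_{z_i})$. By construction $\mathcal E_L g_i=z_i$, and therefore $P_Lg_i=\mathcal S_L z_i$. Because the tensor spherical-harmonic basis is orthonormal and the real/imaginary identification is isometric, $\mathcal S_L$ is an isometry from $\mathbb R^{m_L}$ onto its range. Hence
\begin{equation*}
\|g_1-g_2\|_{L^2}\le \|z_1-z_2\|_{\mathbb R^{m_L}}+\|g_1-P_Lg_1\|_{L^2}+\|g_2-P_Lg_2\|_{L^2}\le \|z_1-z_2\|+2Ce^{-cL},
\end{equation*}
where the last inequality uses Lemma~\ref{lem:sph-harm-approx} and the uniform analytic bound $M_{\mathcal X}^{\mathrm{pr}}$ on $\Psi(\mathcal Q_{\mathrm{pr}})$. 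Both $g_i$ lie in $\Psi(\mathcal Q_{\mathrm{pr}})\subset D_G$, so Lemma~\ref{lem:ln} applies. Using the monotonicity of $t\mapsto[-\ln^-(t)]^{-\hat s}$ together with the elementary bound
\begin{equation*}
[-\ln(a+b)]^{-\hat s}\le[-\ln(2\max\{a,b\})]^{-\hat s},
\end{equation*}
we get
\begin{equation*}
\|H_L(z_1)-H_L(z_2)\|\le \|\mathcal P\|\,C\Bigl(\bigl[-\ln^-(2\|z_1-z_2\|)\bigr]^{-\hat s}+\bigl[-\ln^-(4Ce^{-cL})\bigr]^{-\hat s}\Bigr),
\end{equation*}
and the second term is at most $C'L^{-\hat s}$ for $L$ large; small $L$ is absorbed into the constant. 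Both $C$ and $C'$ are independent of $L$, as the statement requires.

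With this modulus in hand, I would partition a cube containing $K_L$ exactly as in Theorem~\ref{thm:abstract-approximation}: shifted cells whose boundaries carry zero $(\mathcal E_L)_\#\mu$-measure, with mesh $h\asymp\exp(-c_h\eta^{-1/\hat s})$. Define the piecewise-constant $H_J(z)=H_L(z_j)$ on each cell. This gives $\|H_L-H_J\|_{L^\infty(K_L)}\le C\eta/2+C'L^{-\hat s}$ with $J\lesssim\exp(Cm_L\eta^{-1/\hat s})$. The values $H_L(z_j)$ are bounded by $\|\mathcal P\|\sup_{q\in\mathcal Q_{\mathrm{pr}}}\|q\|_{L^2}<\infty$ by compactness, so Lemma~\ref{lem:PC-J} applies with tolerance $\eta/2$. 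It yields $\mathcal A_\eta$ with the stated size and depth bounds, after the same bookkeeping $\log(J/\eta)\lesssim 1+m_L\eta^{-1/\hat s}$ as before. Note that $H_J$ is only Borel measurable, but Lemma~\ref{lem:PC-J} needs nothing more than bounded cell values. The triangle inequality then gives the error $C\eta+CL^{-\hat s}$.

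For the budget statement, for $N>N_{0,L}$ I would choose $\eta_N=[\frac{1}{c_1m_L}\log(N/(c_0(p+1)))]^{-\hat s}\in(0,1)$ as in Theorem~\ref{thm:abstract-approximation}, so that $\mathrm{size}(\mathcal A_{\eta_N})\le N$. Finally, I would use $m_L\asymp L^4$ from Theorem~\ref{prop:encoder-error} to rewrite $L^{-\hat s}\lesssim m_L^{-\hat s/4}$, which gives \eqref{eq:scattering-approximation-error}.

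The main obstacle is the quasi-Lipschitz step, and in particular keeping the constants independent of $L$. I must make sure that the spectral tail constant from Lemma~\ref{lem:sph-harm-approx} is uniform on the prior manifold, and that the logarithmic threshold $e^{-1}$ from $\ln^-$ is handled correctly. For the threshold I would restrict to $h$ and $L$ small enough (respectively large enough) and absorb the finitely many remaining cases into $C$. I also need the $\eta$-term's constant to be independent of $L$; if it were not, the $\eta$-dependence would enter only through the $L$-dependent $c_i$.
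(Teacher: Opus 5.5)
Your proposal is correct and follows essentially the same route as the paper. The steps match: the quasi-Lipschitz bound $\|g_1-g_2\|\le\|z_1-z_2\|+Ce^{-cL}$ from the isometry of $\mathcal S_L$ and the uniform analytic tails, then Lemma~\ref{lem:ln}, the shifted piecewise-constant approximation, Lemma~\ref{lem:PC-J}, the choice of $\eta_{N,L}$, and finally $L^{-\hat s}\lesssim m_L^{-\hat s/4}$. The $L$-uniformity concern you flag is handled exactly as in the paper: take $h\asymp m_L^{-1/2}\exp(-c\eta^{-1/\hat s})$ with implicit constants independent of $L$, which moves the $L$-dependence into $J\le C_L\exp(Cm_L\eta^{-1/\hat s})$ and hence into $c_0$.
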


	\begin{proof}
	The admissibility of the composition is a consequence of Lemma~\ref{lem:admissible-lifting}: for every $z\in K_L$, one has $\mathcal D_L^{\mathrm{ad}}(z)\in D_G$, so $H_L$ is well defined on $K_L$. We do not assume that $\mathcal D_L^{\mathrm{ad}}$ is Lipschitz. Instead, we derive the required modulus of continuity for $H_L$ directly from the spectral encoding.
	Let $z_1,z_2\in K_L$ and set
	\begin{equation*}
	g_i:=\mathcal D_L^{\mathrm{ad}}(z_i)\in D_G,\qquad i=1,2.
	\end{equation*}
	Since $\mathcal E_Lg_i=z_i$ and $P_L=\mathcal S_L\mathcal E_L$, orthonormality gives
	\begin{equation*}
		\|P_L(g_1-g_2)\|_{L^2}
		=
		\|z_1-z_2\|_{\ell^2}.
	\end{equation*}
	The high-frequency tails are controlled by the analytic estimate:
	\begin{equation*}
	\|(I-P_L)g_i\|_{L^2}\le Ce^{-cL},\qquad i=1,2.
	\end{equation*}
	Therefore
	\begin{equation*}
	\|g_1-g_2\|_{L^2(\mathbb S^2\times\mathbb S^2)}
	\le C\|z_1-z_2\|_{\ell^2}+Ce^{-cL}.
	\end{equation*}
	Because $g_1,g_2\in D_G$, the logarithmic stability estimate in Lemma~\ref{lem:ln} applies. Applying $\mathcal P$, we obtain
	\begin{equation*}
	\|H_L(z_1)-H_L(z_2)\|_{\mathbb R^p}
	\le C\left[-\ln^{-}\left(C\|z_1-z_2\|_{\ell^2}+Ce^{-cL}\right)\right]^{-\hat s}.
	\end{equation*}
	Choose a shifted cubical partition of a rectangle containing $K_L$ into cubes of side length $h$ such that every cell boundary has $(\mathcal E_L)_{\#}\mu$-measure zero, and define the corresponding piecewise-constant approximation $H_J$ on $K_L$. Since two points in the same cell are at most $\sqrt{m_L}h$ apart,
	\begin{equation*}
		\|H_L-H_J\|_{L^\infty(K_L)}
		\le
		C\left[
		-\ln^{-}\left(C\sqrt{m_L}h+Ce^{-cL}\right)
		\right]^{-\hat s}.
	\end{equation*}
	Choose $h\asymp m_L^{-1/2}\exp(-c\eta^{-1/\hat s})$, with the implicit constants independent of $L$ and $\eta$. Then
	\begin{equation*}
		\left[
		-\ln^{-}\left(C\sqrt{m_L}h+Ce^{-cL}\right)
		\right]^{-\hat s}
		\le C\eta+CL^{-\hat s},
	\end{equation*}
	and the number of cubes satisfies
	\begin{equation*}
		J
		\le
		C_L\exp\bigl(Cm_L\eta^{-1/\hat s}\bigr),
	\end{equation*}
	where $C_L>0$ is independent of $\eta$ and can be absorbed into
	the $L$-dependent constants in the network-size estimate.
	Applying Lemma~\ref{lem:PC-J} with tolerance $\eta$ yields a ReLU network satisfying
	\begin{equation*}
	\|\mathcal A_\eta-H_L\|_{L^2(K_L,(\mathcal E_L)_{\#}\mu)}
	\le C\eta+CL^{-\hat s},
	\end{equation*}
	and
	\begin{equation*}
	\mathrm{size}(\mathcal A_\eta)
	\le C_*J\left(1+\log\frac{J}{\eta}\right)+pJ,
	\qquad
	\mathrm{depth}(\mathcal A_\eta)
	\le C_*\left(1+\log\frac{J}{\eta}\right).
	\end{equation*}
	Combining these estimates with the bound for $J$ gives the stated size and depth bounds. Since $m_L\asymp L^4$ in three dimensions, $L^{-\hat s}\asymp m_L^{-\hat s/4}$.
	For $N> N_{0,L}$, define
	\begin{equation*}
	\eta_{N,L}
	:=
	\left[
	\frac{1}{c_1m_L}
	\log\!\left(\frac{N}{c_0(p+1)}\right)
	\right]^{-\hat s}.
	\end{equation*}
	Then $\eta_{N,L}\in(0,1)$ and $\mathrm{size}(\mathcal A_{\eta_{N,L}})\le N$, and substitution of $\eta_{N,L}$ into the preceding approximation estimate gives \eqref{eq:scattering-approximation-error}.
	\end{proof}
 
    Combining the preceding encoding, neural approximation, and reconstruction estimates yields the following quantitative DeepONet error bound for inverse acoustic scattering.
	\begin{theorem}[DeepONet error bound for inverse acoustic scattering]
	\label{thm:main}
	Under the assumptions of Lemma~\ref{lem:ln} and
	\ref{lem:admissible-lifting}, fix $L,n\in\mathbb N$, set
	\begin{equation*}
		r_p:=\dim_{\mathbb C}\mathbb P_n(B_1),
		\qquad
		m=m_L,
		\qquad
		p=2r_p.
	\end{equation*}
	Let $\nu$ be the prior measure introduced above, so that
	$\mathcal Q_{\mathrm{pr}}=\operatorname{supp}\nu
	\subset\mathcal Q_{\mathrm{ad}}$, and set
	$\mu=\Psi_{\#}\nu$. Take $\mathcal E_L$ to be the spectral coefficient encoder and
	\begin{equation*}
		\mathcal D_L^{\mathrm{ad}}:K_L\to D_G
	\end{equation*}
	to be the prior-induced admissible decoder defined above.
		Let $\mathcal P$ and $\mathcal R=\mathcal R_{\mathrm{SH}}$ be the
		spectral coefficient projector and reconstructor specified in
		Theorem~\ref{thm:inverse-scattering-reconstruction}, specialized
		to $d=3$ and $B=B_1$, so that $\mathcal R\circ\mathcal P=P_{r_p}$.
		By orthonormality,
		$\|\mathcal P\|\le 1$, $\Lip(\mathcal R)=1$, and
		$\Lip(\mathcal R\circ\mathcal P)\le 1$, uniformly in $p$.
	Assume that $m$ is sufficiently large so that $\varepsilon_m^{\mathrm{ad}}$ lies in the small-data regime of the logarithmic stability estimate. Let $c_0,c_1,N_{0,L}$ be as in Theorem~\ref{thm:A-approximation}. Then there exist constants $C_1,C_2,C_3>0$, independent of $m,p$, and $N$, such that, for every $N>N_{0,L}$, there exists a ReLU approximator $\mathcal A_N:\mathbb R^m\to\mathbb R^p$ of size at most $N$ for which the corresponding DeepONet satisfies
	\begin{equation}\label{eq:inverse-scattering-error}
	\begin{aligned}
	\mathscr E
	\le{}&
	C_1\Lip(\mathcal R)
	\left[
	\frac{1}{c_1m}
	\log\!\left(\frac{N}{c_0(p+1)}\right)
	\right]^{-\hat s}\\
	&+C_2\bigl(\Lip(\mathcal R)+\Lip(\mathcal R\circ\mathcal P)\bigr)m^{-\hat s/4}
	+C_3p^{-s/3},
	\end{aligned}
	\end{equation}
	where $\hat s=\frac{s}{s+3}-\epsilon$ and $0<\epsilon<\frac{s}{s+3}$.
	\end{theorem}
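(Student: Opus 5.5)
We plan to run the three-term decomposition from the proof of Theorem~\ref{thm:error-decomposition}, with the Lipschitz decoder replaced by the prior-induced admissible decoder $\mathcal D_L^{\mathrm{ad}}$ and with each term bounded by one of the problem-specific estimates just established. Concretely, for $\mu$-a.e.\ $f\in\Psi(\mathcal Q_{\mathrm{pr}})$ we write
\begin{equation*}
\mathscr N(f)-\mathscr G(f)=T_1+T_2+T_3,
\end{equation*}
where
\begin{equation*}
T_1:=\mathcal R\mathcal A_N\mathcal E_L f-\mathcal R\mathcal P G\mathcal D_L^{\mathrm{ad}}\mathcal E_L f,
\qquad
T_2:=\mathcal R\mathcal P G\mathcal D_L^{\mathrm{ad}}\mathcal E_L f-\mathcal R\mathcal P Gf,
\qquad
T_3:=\mathcal R\mathcal P Gf-Gf.
\end{equation*}
This uses that $\mathscr G=G$ on $\Psi(\mathcal Q_{\mathrm{pr}})$, which carries full $\mu$-measure. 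Measurability of every composition follows from the Borel selector $s_L$ and the continuity of $\mathcal E_L$, $\mathcal P$, $\mathcal R$.

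For $T_1$, since $\mathcal R$ is affine, $\|T_1\|_{L^2(\mu)}\le\Lip(\mathcal R)\,\|\mathcal A_N-H_L\|_{L^2(K_L,(\mathcal E_L)_\#\mu)}$ after changing variables to the pushforward measure. We then choose $\mathcal A_N$ as the network from Theorem~\ref{thm:A-approximation} with $\eta=\eta_{N,L}$. That theorem gives size at most $N$ and the bound \eqref{eq:scattering-approximation-error}. Its constant $C$ depends only on $\|\mathcal P\|\le1$ and the fixed data, so it is uniform in $m,p$. This produces the first term with $C_1=C$, together with a contribution $C\,\Lip(\mathcal R)\,m^{-\hat s/4}$.

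For $T_2$, we use $\|T_2\|_Y\le\Lip(\mathcal R\circ\mathcal P)\,\|G(g)-G(f)\|_{L^2(B_1)}$ with $g=\mathcal D_L^{\mathrm{ad}}\mathcal E_L f$. Both $g$ and $f$ lie in $D_G$, so Lemma~\ref{lem:ln} applies and gives $\|G(g)-G(f)\|\le C[-\ln^-(\|g-f\|)]^{-\hat s}$. Lemma~\ref{lem:admissible-lifting} gives $\|g-f\|\le\varepsilon_m^{\mathrm{ad}}\le Ce^{-cL}$. Under the small-data assumption, $-\ln^-$ is monotone in this regime, so the bound becomes $[cL-\log C]^{-\hat s}\lesssim L^{-\hat s}\asymp m^{-\hat s/4}$, using $m_L\asymp L^4$. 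Taking the $L^2(\mu)$ norm of an a.e.\ bound, this yields the $\Lip(\mathcal R\circ\mathcal P)$ part of the second term.

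For $T_3$, we have $\|T_3\|_{L^2(\mu)}=\mathscr E_{\mathcal R}$. The prior class satisfies $\|q\|_{H^s}\le C_q$ and is supported in $B_1$, so Theorem~\ref{thm:inverse-scattering-reconstruction} with $d=3$ gives $C\,C_q\,p^{-s/3}$. Summing the three bounds and merging constants gives \eqref{eq:inverse-scattering-error}.

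The main obstacle is making sure the constants $C_1,C_2$ are genuinely independent of $m$ and $p$. The network-size constants $c_0,c_1$ are allowed to depend on $L$, but the error constants are not. In particular, we must check that the small-data absorption in Theorem~\ref{thm:A-approximation}, where $[-\ln^-(a+b)]^{-\hat s}\lesssim[-\ln^- a]^{-\hat s}+[-\ln^- b]^{-\hat s}$, uses only the subadditivity of $t\mapsto|\log t|^{-\hat s}$ near $0$ and not any $L$-dependent quantity. We would verify this subadditivity explicitly, via $|\log(a+b)|\ge|\log(2\max\{a,b\})|$, before concluding.
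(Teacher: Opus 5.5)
Your proposal is correct and follows essentially the same route as the paper. You use the three-term decomposition with the admissible decoder $\mathcal D_L^{\mathrm{ad}}$, bound $T_1$ via Theorem~\ref{thm:A-approximation} at $\eta=\eta_{N,L}$, bound $T_2$ via Lemma~\ref{lem:ln} together with Lemma~\ref{lem:admissible-lifting}, and bound $T_3$ via Theorem~\ref{thm:inverse-scattering-reconstruction}. Your added check that $|\log t|^{-\hat s}$ is subadditive near $0$ (so the error constants stay independent of $L$), and your remark on measurability through the Borel selector, make explicit points the paper leaves implicit.
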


	\begin{proof}
	By Lemma~\ref{lem:admissible-lifting}, the decoder $\mathcal D_L^{\mathrm{ad}}$ is admissibility preserving:
	\begin{equation*}
	\mathcal D_L^{\mathrm{ad}}(K_L)\subset D_G.
	\end{equation*}
	Consequently, for $\mu$-a.e.
	$f\in\Psi(\mathcal Q_{\mathrm{pr}})$, both $f$ and
	$\mathcal D_L^{\mathrm{ad}}\mathcal E_L(f)$ belong to $D_G$. Using the same three-term algebraic decomposition as in the proof of Theorem~\ref{thm:error-decomposition},
	\begin{equation*}
	\mathscr E
	\le \Lip(\mathcal R)\mathscr E_{\mathcal A}
	+\Lip(\mathcal R\circ\mathcal P)C|\log\varepsilon_m^{\mathrm{ad}}|^{-\hat s}
	+\mathscr E_{\mathcal R}.
	\end{equation*}
	The admissible lifting estimate in Lemma~\ref{lem:admissible-lifting} yields
	\begin{equation*}
	\varepsilon_m^{\mathrm{ad}}\le Ce^{-cm^{1/4}},
	\end{equation*}
	and hence, for $m$ sufficiently large,
	\begin{equation*}
	|\log\varepsilon_m^{\mathrm{ad}}|^{-\hat s}\le Cm^{-\hat s/4}.
	\end{equation*}
	The approximation term is bounded by Theorem~\ref{thm:A-approximation}:
	\begin{equation*}
	\mathscr E_{\mathcal A}
	\le
	C\left[
	\frac{1}{c_1m}
	\log\!\left(\frac{N}{c_0(p+1)}\right)
	\right]^{-\hat s}
	+Cm^{-\hat s/4},
	\end{equation*}
	and the reconstruction term is bounded by Theorem~\ref{thm:inverse-scattering-reconstruction}:
	\begin{equation*}
	\mathscr E_{\mathcal R}\le Cp^{-s/3}.
	\end{equation*}
	Combining the three estimates gives \eqref{eq:inverse-scattering-error}.
	\end{proof}

	\subsection{Numerical experiments}

We test the proposed DeepONet construction on inverse acoustic medium scattering in two and three spatial dimensions. In each dimension, we consider an infinite-dimensional prior on a function space and a finite-dimensional prior supported on a prescribed trial space. Results are reported separately for the two- and three-dimensional settings. Because the two prior classes define different reconstruction tasks and use different output representations, their error levels are not interpreted as a controlled model comparison. The experiments assess reconstruction accuracy on independent test data and the empirical sensitivity of the trained models to perturbations of the far-field measurements.

\subsubsection{Forward solver and mitigation of the inverse crime}

Let $B_d$ denote the unit disk when $d=2$ and the unit ball when $d=3$.  For an incident direction $\omega\in\SSS^{d-1}$, the incident wave is
\begin{equation*}
	u^{\mathrm{inc}}(x,\omega)=e^{ikx\cdot\omega}.
\end{equation*}
Given a compactly supported contrast $q$, the total field satisfies
\begin{equation*}
	\Delta u_q+k^2(1+q)u_q=0\qquad \text{in }\RR^d,
\end{equation*}
together with the Sommerfeld radiation condition.  We solve the forward problem through the Lippmann--Schwinger equation
\begin{equation}\label{eq:num_ls}
	u_q(x,\omega)=u^{\mathrm{inc}}(x,\omega)
	+k^2\int_{B_d}\Phi_d(x,y)q(y)u_q(y,\omega)\,dy,
\end{equation}
where
\begin{equation*}
	\Phi_2(x,y)=\frac{i}{4}H_0^{(1)}(k|x-y|),
	\qquad
	\Phi_3(x,y)=\frac{e^{ik|x-y|}}{4\pi |x-y|}.
\end{equation*}
The far-field pattern is then evaluated by
\begin{equation}\label{eq:num_farfield}
	u_q^\infty(\widehat x,\omega)
	=C_d k^2\int_{B_d}e^{-ik\widehat x\cdot y}q(y)u_q(y,\omega)\,dy,
\end{equation}
with
\begin{equation*}
	C_2=\frac{e^{i\pi/4}}{\sqrt{8\pi k}},
	\qquad
	C_3=\frac{1}{4\pi}.
\end{equation*}

The integral equation is discretized on a Cartesian grid in $[-1,1]^d$, after retaining the points lying inside $B_d$.  If $\{x_i\}_{i=1}^{N}$ and $\{w_i\}_{i=1}^{N}$ are the retained nodes and weights, then all incident fields are solved simultaneously from
\begin{equation*}
	\bigl(I-k^2\Phi\operatorname{diag}(q\,w)\bigr)U=U^{\mathrm{inc}}.
\end{equation*}
The far-field matrix is computed from the corresponding Nystr\"om discretization of \eqref{eq:num_farfield}.  In all four experiments the dense far-field computation uses $64$ incident directions and $64$ observation directions.  The two-dimensional experiments use $k=6$, while the three-dimensional experiments use $k=5.5$.

The reported errors are computed on independently generated test data using a different spatial discretization, thereby mitigating the same-discretization inverse crime.  In two dimensions, 4000 development samples are generated on the $N_{\rm side}=25$ disk grid and split into 3200 training and 800 validation samples. The independent test set contains 800 samples generated on the finer $N_{\rm side}=33$ grid. Both development and test data use the same self-cell correction for the weakly singular diagonal entries.  In three dimensions, 5000 physical contrasts are each evaluated on two forward grids, $N_{\mathrm{side}}=15$ and $N_{\mathrm{side}}=16$, yielding 10000 development input--output pairs. The split is performed by physical-sample identity, resulting in 8000 training pairs and 2000 validation pairs. The use of two training discretizations is intended to reduce dependence on a particular forward grid. The independent test set contains 2000 separately generated contrasts evaluated on the unseen $N_{\mathrm{side}}=17$ grid.

\subsubsection{Priors and learning architectures}

We now describe the four learning pipelines.  The common architecture follows the encoder--DNN--reconstructor decomposition discussed earlier in the paper, but the concrete realization of each component depends on whether the prior is infinite-dimensional or finite-dimensional.  Tables~\ref{tab:pipeline_2d} and~\ref{tab:pipeline_3d} summarize these choices for the two- and three-dimensional experiments, respectively.

\paragraph{Infinite-dimensional linked-GRF priors.}
For the infinite-dimensional experiments, the contrast is generated from a latent Gaussian random field $F$ through a positive bounded link and a compactly supported $C^1$ radial cutoff.  We set
\begin{equation}\label{eq:num_linked_grf_prior}
	q(x)=\rho(x)\bigl(\Gamma(F(x))-1\bigr).
\end{equation}
Here $\Gamma$ is the link function
\begin{equation}\label{eq:num_grf_link}
	\Gamma(t)=n_{\min}+(n_{\max}-n_{\min})\frac{1}{1+e^{-t}},
\end{equation}
which maps the latent field into the positive interval $(n_{\min},n_{\max})$.  The cutoff $\rho$ localizes the contrast away from the boundary.  Here $r_{\rm in}=\alpha_\rho R$ and
$r_{\rm out}=\min\{R,r_{\rm in}+\beta_\rho R\}$, with fixed
cutoff parameters chosen separately in two and three dimensions. Writing $r=|x|$, we use
\begin{equation}\label{eq:num_radial_cutoff}
	\rho(x)=\rho(r)=
	\begin{cases}
		1, & r\le r_{\rm in},\\[1mm]
		\dfrac12\left(1+\cos\!\left(\pi\dfrac{r-r_{\rm in}}{r_{\rm out}-r_{\rm in}}\right)\right),
		& r_{\rm in}<r<r_{\rm out},\\[3mm]
		0, & r\ge r_{\rm out}.
	\end{cases}
\end{equation}
Thus
\begin{equation*}
	1+q(x)=(1-\rho(x))+\rho(x)\Gamma(F(x))>0,
\end{equation*}
so the physical positivity constraint is imposed at the level of the prior.  The latent field has mean $0.10$ and a squared-exponential covariance with standard deviation $\sigma_F$ and correlation length $\ell$. In two dimensions, we use $(\sigma_F,\ell,n_{\min},n_{\max})=(0.15,0.22,0.7,1.4)$ and $(r_{\rm in},r_{\rm out})=(0.92R,0.98R)$, while in three dimensions we use $(\sigma_F,\ell,n_{\min},n_{\max})=(0.12,0.28,0.75,1.30)$ and $(r_{\rm in},r_{\rm out})=(0.85R,0.95R)$.

\paragraph{Finite-dimensional Wendland priors.}
For the finite-dimensional experiments, the unknown contrast is generated in a prescribed space
\begin{equation}\label{eq:num_finite_prior}
	W_r=\operatorname{span}\{\psi_1,\ldots,\psi_r\},
	\qquad
	q(x)=\sum_{j=1}^{r}a_j\psi_j(x).
\end{equation}
We use nonnegative compactly supported Wendland $C^2$ radial basis functions together with a radial cutoff.  More precisely, let
\begin{equation}\label{eq:num_wendland_profile}
	\varphi(t)=(1-t)_+^4(4t+1),
\end{equation}
and let $\rho$ be the same type of $C^1$ cutoff as in \eqref{eq:num_radial_cutoff}.  Before weighted normalization, the basis functions are given by
\begin{equation}\label{eq:num_wendland_basis}
	\psi_j^{\rm raw}(x)
	=
	\varphi\!\left(\frac{|x-c_j|}{\rho_b}\right)\rho(x),
	\qquad j=1,\ldots,r,
\end{equation}
where $c_j$ are prescribed centers and $\rho_b>0$ is the basis radius.  The final basis functions $\psi_j$ are obtained from $\psi_j^{\rm raw}$ by columnwise normalization in the weighted discrete $L^2$ norm.  Since both $\varphi$ and $\rho$ are nonnegative, the normalized basis functions remain nonnegative.

In the two-dimensional finite-dimensional experiment, we use $r=17$ basis functions, with centers placed at the origin and on two concentric rings inside the disk.  In the three-dimensional finite-dimensional experiment, we use $r=47$ basis functions, with centers placed at the origin and on three spherical shells inside the ball.  The coefficients are sampled independently from a uniform distribution on $[0,0.025]$ in two dimensions and on $[0,0.01]$ in three dimensions. Hence $a_j\ge 0$, $\psi_j(x)\ge 0$, and the generated contrasts satisfy $1+q(x)>0$.

\begin{table}[H]
\centering
\caption{Two-dimensional learning setup.}
\label{tab:pipeline_2d}
\begin{tabular}{@{}p{0.20\textwidth}p{0.36\textwidth}p{0.36\textwidth}@{}}
\toprule
Component & Infinite-dimensional model & Finite-dimensional model \\
\midrule
Prior
& Linked-GRF prior \eqref{eq:num_linked_grf_prior} on the disk.
& Wendland prior \eqref{eq:num_finite_prior} with $r=17$ basis functions. \\
\addlinespace
Encoder / input
& Fourier encoder on $\SSS^1\times\SSS^1$ with modes $|m|,|n|\le20$, yielding $3362$ real features from the $64\times64$ far-field data.
& Direct measurement encoder based on a complex $16\times16$ incident--observation submatrix, yielding $512$ real features. \\
\addlinespace
DNN
& ReLU MLP with width $256$ and $4$ affine layers; centered input and standardized output coefficients.
& ReLU MLP with width $256$ and $4$ affine layers; componentwise standardization of inputs and coefficients. \\
\addlinespace
Output
& $370$ coefficients in the disk Fourier--Jacobi trunk space.
& Finite coefficient vector $a\in\RR^{17}$. \\
\addlinespace
Reconstruction
& Linear reconstruction in the weighted-QR-orthonormalized disk trunk basis.
& Direct basis reconstruction $\widehat q=\sum_{j=1}^r\widehat a_j\psi_j$. \\
\bottomrule
\end{tabular}
\end{table}

\begin{table}[H]
\centering
\caption{Three-dimensional learning setup.}
\label{tab:pipeline_3d}
\begin{tabular}{@{}p{0.20\textwidth}p{0.36\textwidth}p{0.36\textwidth}@{}}
\toprule
Component & Infinite-dimensional model & Finite-dimensional model \\
\midrule
Prior
& Linked-GRF prior \eqref{eq:num_linked_grf_prior} on the ball.
& Wendland prior \eqref{eq:num_finite_prior} with $r=47$ basis functions. \\
\addlinespace
Encoder / input
& Spherical-harmonic coefficient encoder on $\SSS^2\times\SSS^2$ with degree $L=6$, yielding $4802$ real features from the $64\times64$ far-field data.
& Direct measurement encoder based on a complex $49\times49$ incident--observation submatrix, yielding $4802$ real features. \\
\addlinespace
DNN
& ReLU MLP with width $512$ and $5$ affine layers; centered input and standardized coefficients.
& ReLU MLP with width $512$ and $5$ affine layers; componentwise standardization of inputs and coefficients. \\
\addlinespace
Output
& Coefficients in the ball spherical-harmonic--Jacobi trunk space with total degree at most $14$.
& Finite coefficient vector $a\in\RR^{47}$. \\
\addlinespace
Reconstruction
& Linear reconstruction in the weighted-QR-orthonormalized ball trunk basis.
& Direct basis reconstruction $\widehat q=\sum_{j=1}^r\widehat a_j\psi_j$. \\
\bottomrule
\end{tabular}
\end{table}

All four networks are trained for 180 epochs using AdamW with learning rate $5\times10^{-4}$, weight decay $10^{-6}$, and batch size $64$. Tables~\ref{tab:pipeline_2d} and~\ref{tab:pipeline_3d} summarize the four numerical pipelines.  They are intended to clarify the experimental design rather than to support a direct comparison between the two- and three-dimensional error levels.

\subsubsection{Noiseless reconstructions}

We first evaluate the learned inverse maps on independent test data without additional measurement noise.  Representative reconstructions are shown in Figures~\ref{fig:noiseless_2d} and~\ref{fig:noiseless_3d}, using one randomly selected test example from each experiment.  The two-dimensional figure displays full-disk reconstructions for the infinite- and finite-dimensional priors.  The three-dimensional figure displays central coordinate slices in the $xy$, $xz$, and $yz$ planes.  In two dimensions, the mean relative weighted $L^2$ error is $8.15\%$ for the infinite-dimensional model and $1.32\%$ for the finite-dimensional model.  In three dimensions, the corresponding errors are $8.01\%$ and $6.11\%$. These noiseless errors provide reference levels for the subsequent noise-sensitivity study. The representative reconstructions below illustrate the spatial distribution of the reconstruction errors, whereas the reported percentages are averages over the full independent test sets.

For the finite-dimensional tests, the reference contrast is evaluated on the reconstruction grid from the saved true coefficient vector using the corresponding saved basis normalization. This avoids adding an interpolation error or a basis-normalization mismatch to the reported inverse error.  For the infinite-dimensional tests, the independent-grid reference field is interpolated to the reconstruction grid before the weighted error is computed.

\begin{figure}[!htbp]
	\centering
	\begin{minipage}{0.95\textwidth}
		\centering
		\textbf{(a) Infinite-dimensional prior.}\par\vspace{0.3em}
		\includegraphics[width=\textwidth]{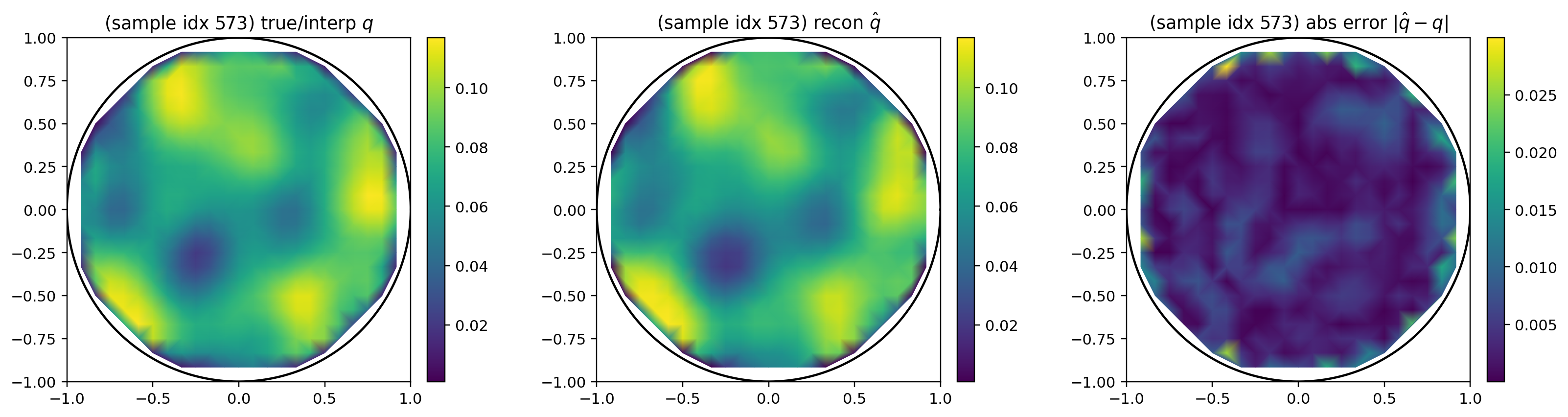}
	\end{minipage}
	
	\vspace{0.8em}
	
	\begin{minipage}{0.95\textwidth}
		\centering
		\textbf{(b) Finite-dimensional prior.}\par\vspace{0.3em}
		\includegraphics[width=\textwidth]{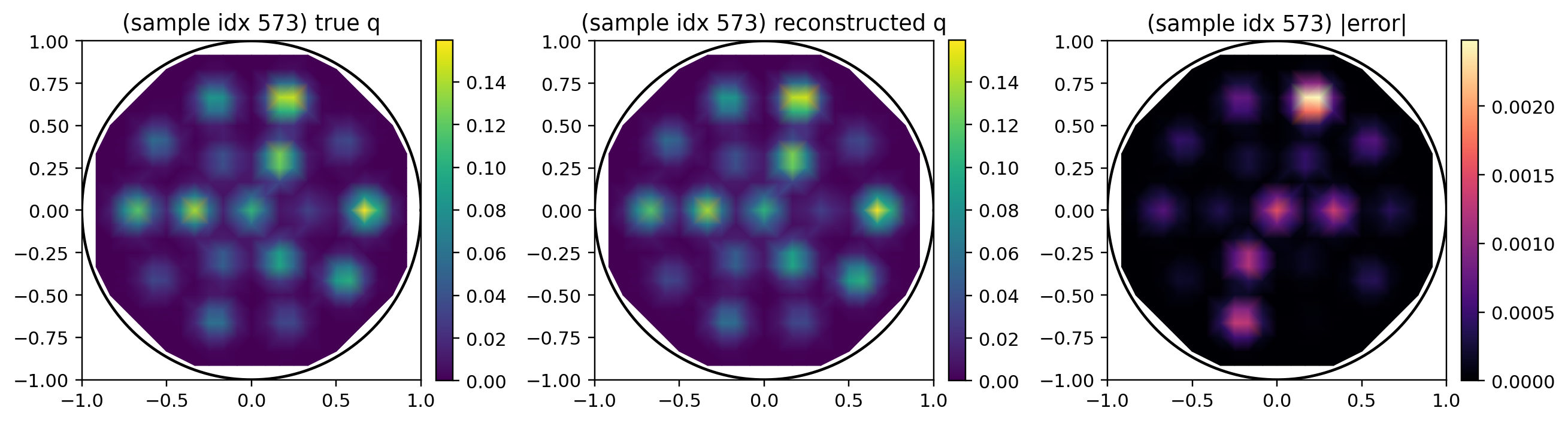}
	\end{minipage}
	\caption{Representative noiseless reconstructions for the two-dimensional experiments, using one randomly selected test example from each setting.  Each image shows the reference contrast, the reconstructed contrast, and the absolute pointwise error.}
	\label{fig:noiseless_2d}
\end{figure}

Figures~\ref{fig:noiseless_2d} and~\ref{fig:noiseless_3d} show that the dominant spatial structures of the contrasts are recovered in all four settings. The infinite-dimensional reconstructions preserve the large-scale features but exhibit more spatially distributed residual errors, whereas the finite-dimensional reconstructions recover the prescribed basis components more accurately. Within each dimension, the smaller error of the finite-dimensional model is consistent with its lower-dimensional output representation and direct prediction of the basis coefficients. Since the two prior classes define different reconstruction tasks, however, these error levels should not be interpreted as a controlled comparison between the two models.

\begin{figure}[p]
	\centering
	\begin{minipage}{0.99\textwidth}
		\centering
		\textbf{(a) Infinite-dimensional prior.}\par\vspace{0.3em}
		\includegraphics[height=0.43\textheight,keepaspectratio]{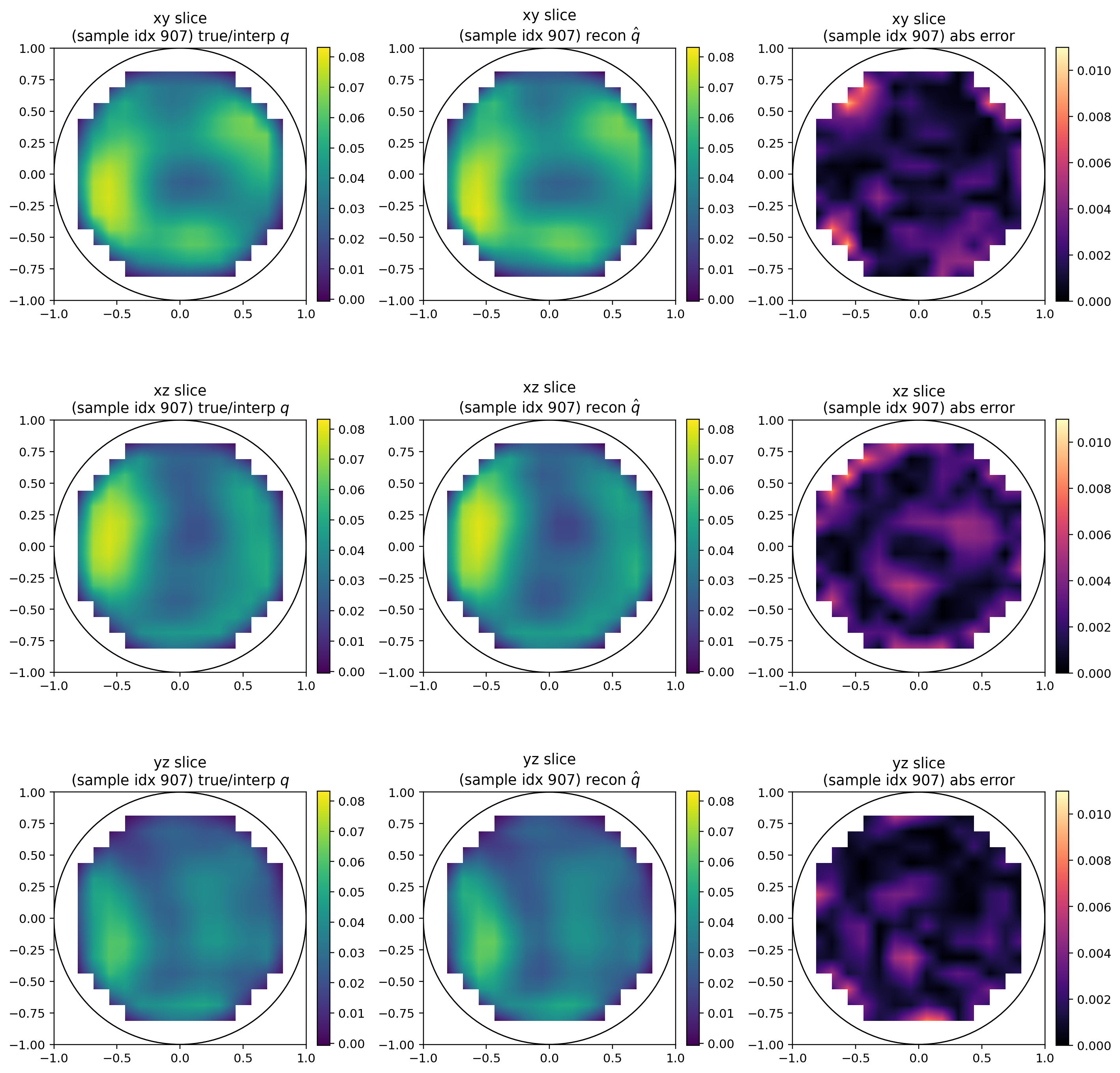}
	\end{minipage}
	
	\vspace{0.6em}
	
	\begin{minipage}{0.99\textwidth}
		\centering
		\textbf{(b) Finite-dimensional prior.}\par\vspace{0.3em}
		\includegraphics[height=0.43\textheight,keepaspectratio]{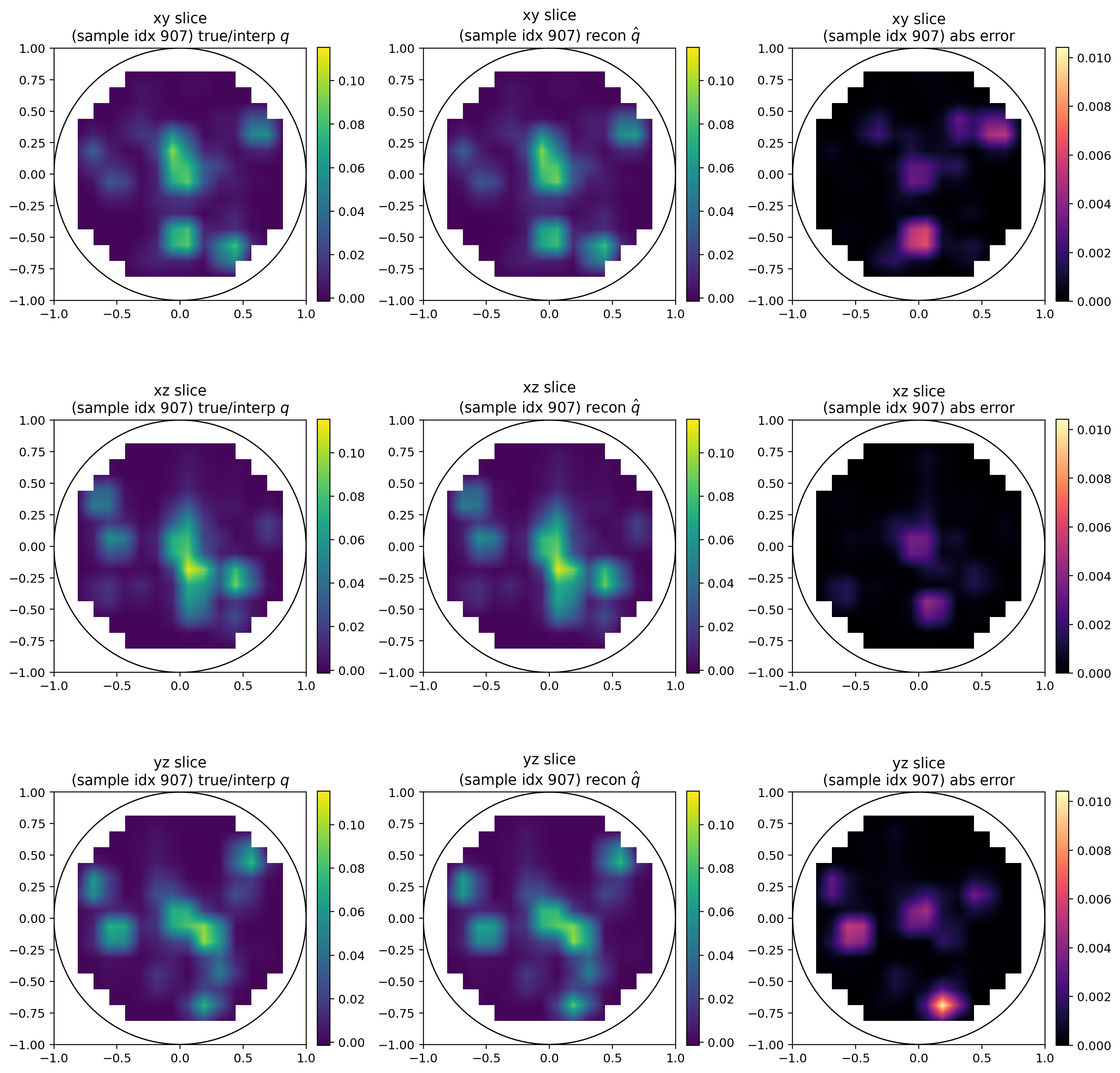}
	\end{minipage}
	\caption{Representative noiseless reconstructions for the three-dimensional experiments, using one randomly selected test example from each setting.  The columns show central $xy$, $xz$, and $yz$ slices, and the rows show the reference contrast, the reconstruction, and the absolute error.}
	\label{fig:noiseless_3d}
\end{figure}

\FloatBarrier

\subsubsection{Sensitivity to measurement noise}

Finally, we examine the sensitivity of the learned inverse maps to
perturbations of the measured far-field data.  We use entrywise multiplicative complex noise,
\begin{equation}\label{eq:num_noise_model}
	(U_\eta)_{ij}
	=
	U_{ij}(1+\eta Z_{ij}),
	\qquad
	Z_{ij}
	=
	\frac{\xi_{ij}^{(1)}+i\xi_{ij}^{(2)}}{\sqrt{2}},
	\qquad
	\xi_{ij}^{(1)},\xi_{ij}^{(2)}
	\stackrel{\mathrm{i.i.d.}}{\sim}\mathcal N(0,1),
\end{equation}
where
\begin{equation*}
	\eta\in\{0,0.005,0.01,0.05\}.
\end{equation*}
For the infinite-dimensional models, the perturbation is applied to the dense far-field matrix before the Fourier or spherical-harmonic encoder.  For the finite-dimensional models, the perturbation is applied to the sampled measurement matrix before real--imaginary vectorization.  The errors reported in Tables~\ref{tab:noise_2d_compact} and~\ref{tab:noise_3d_compact} are
\begin{equation*}
	E_{\rm rel}
	=
	\frac{\left(\sum_i |\widehat q(x_i)-q(x_i)|^2w_i\right)^{1/2}}
	{\left(\sum_i |q(x_i)|^2w_i\right)^{1/2}},
	\qquad
	E_{\rm abs}
	=
	\left(\sum_i |\widehat q(x_i)-q(x_i)|^2w_i\right)^{1/2}.
\end{equation*}
We report the test-set means of these quantities to summarize their variation with the prescribed noise level.

\begin{table}[htbp]
\centering
\caption{Sensitivity to measurement noise in two dimensions.}
\label{tab:noise_2d_compact}
\begin{tabular}{@{}ccccc@{}}
\toprule
Noise level $\eta$
& \multicolumn{2}{c}{Infinite-dimensional}
& \multicolumn{2}{c}{Finite-dimensional} \\
\cmidrule(lr){2-3}\cmidrule(l){4-5}
& $E_{\rm rel}$ & $E_{\rm abs}$ & $E_{\rm rel}$ & $E_{\rm abs}$ \\
\midrule
$0$     & $0.08145$ & $9.580\!\times\!10^{-3}$ & $0.01320$ & $7.735\!\times\!10^{-4}$ \\
$0.005$ & $0.08183$ & $9.625\!\times\!10^{-3}$ & $0.01325$ & $7.765\!\times\!10^{-4}$ \\
$0.01$  & $0.08300$ & $9.765\!\times\!10^{-3}$ & $0.01339$ & $7.853\!\times\!10^{-4}$ \\
$0.05$  & $0.11240$ & $1.326\!\times\!10^{-2}$ & $0.01813$ & $1.068\!\times\!10^{-3}$ \\
\bottomrule
\end{tabular}
\end{table}

\begin{table}[htbp]
\centering
\caption{Sensitivity to measurement noise in three dimensions.}
\label{tab:noise_3d_compact}
\begin{tabular}{@{}ccccc@{}}
\toprule
Noise level $\eta$
& \multicolumn{2}{c}{Infinite-dimensional}
& \multicolumn{2}{c}{Finite-dimensional} \\
\cmidrule(lr){2-3}\cmidrule(l){4-5}
& $E_{\rm rel}$ & $E_{\rm abs}$ & $E_{\rm rel}$ & $E_{\rm abs}$ \\
\midrule
$0$     & $0.08005$ & $5.395\!\times\!10^{-3}$ & $0.06111$ & $2.474\!\times\!10^{-3}$ \\
$0.005$ & $0.08040$ & $5.419\!\times\!10^{-3}$ & $0.06115$ & $2.475\!\times\!10^{-3}$ \\
$0.01$  & $0.08130$ & $5.480\!\times\!10^{-3}$ & $0.06131$ & $2.482\!\times\!10^{-3}$ \\
$0.05$  & $0.10488$ & $7.092\!\times\!10^{-3}$ & $0.06584$ & $2.667\!\times\!10^{-3}$ \\
\bottomrule
\end{tabular}
\end{table}

\begin{figure}[p]
	\centering
	
	\begin{minipage}{0.99\textwidth}
		\centering
		\textbf{(a) Two-dimensional infinite-dimensional experiment.}\par\vspace{0.3em}
		\includegraphics[width=\textwidth]{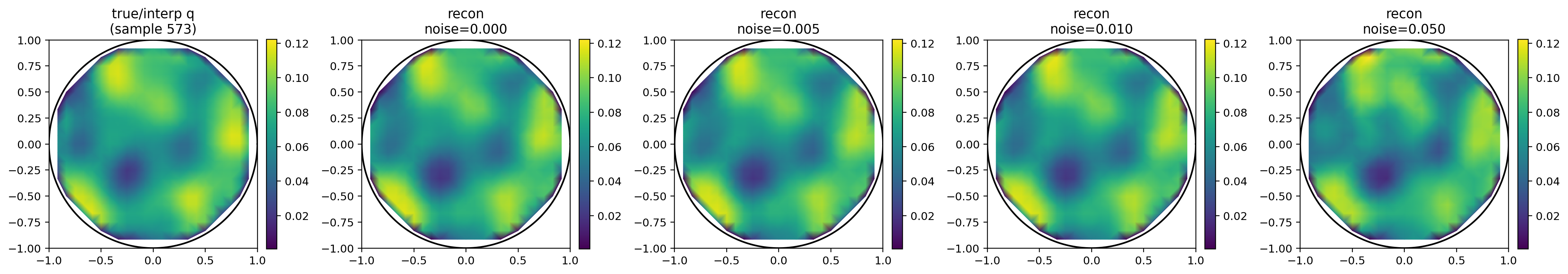}
	\end{minipage}
	
	\vspace{0.8em}
	
	\begin{minipage}{0.99\textwidth}
		\centering
		\textbf{(b) Two-dimensional finite-dimensional experiment.}\par\vspace{0.3em}
		\includegraphics[width=\textwidth]{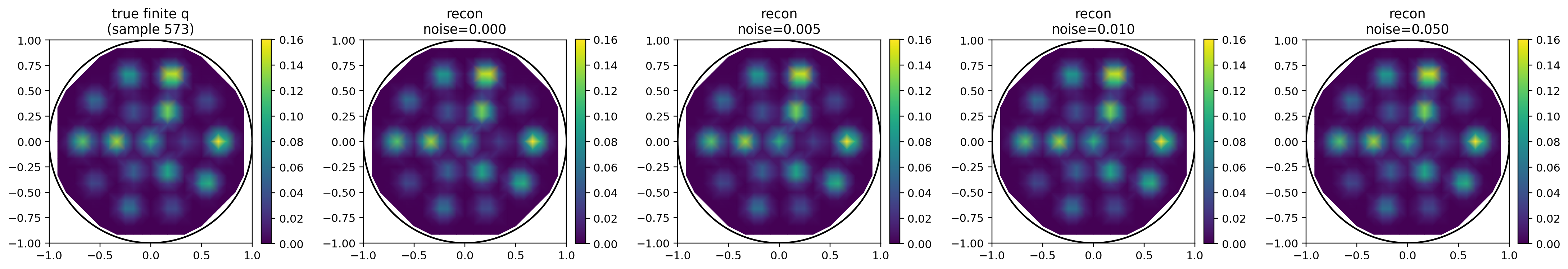}
	\end{minipage}
	
	\vspace{0.8em}
	
	\begin{minipage}{0.99\textwidth}
		\centering
		\textbf{(c) Three-dimensional infinite-dimensional experiment, central $xy$ slice.}\par\vspace{0.3em}
		\includegraphics[width=\textwidth]{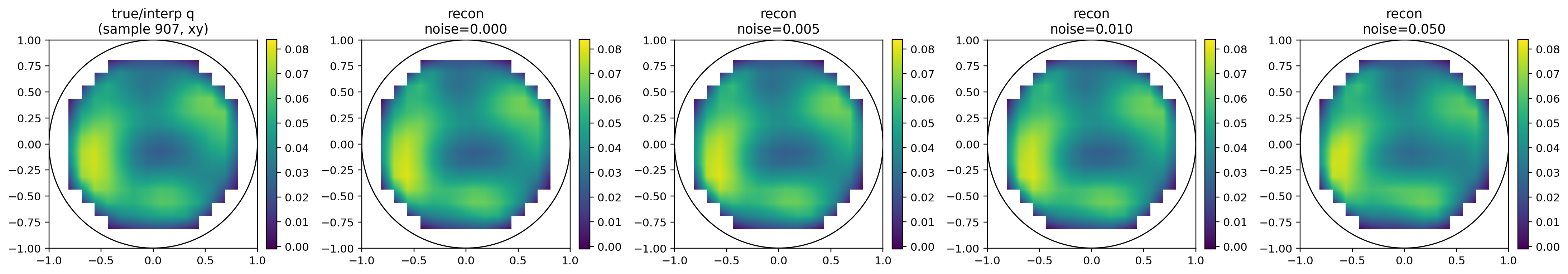}
	\end{minipage}
	
	\vspace{0.8em}
	
	\begin{minipage}{0.99\textwidth}
		\centering
		\textbf{(d) Three-dimensional finite-dimensional experiment, central $xy$ slice.}\par\vspace{0.3em}
		\includegraphics[width=\textwidth]{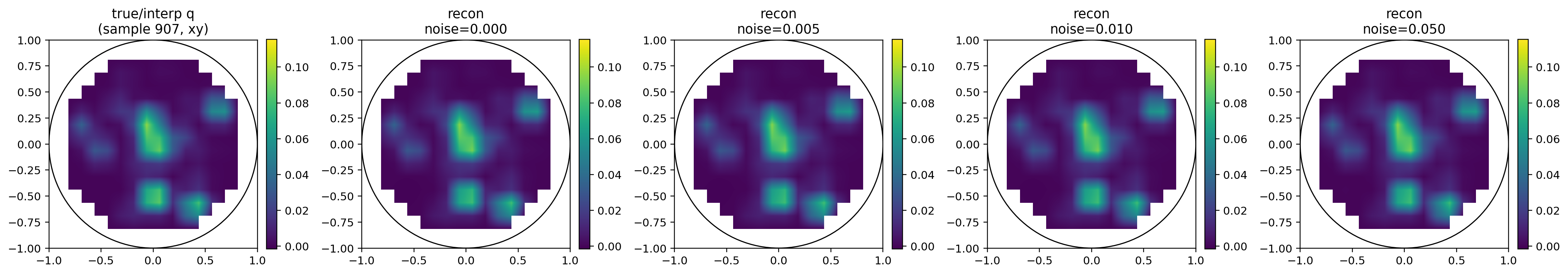}
	\end{minipage}
	
	\caption{Representative reconstructions under measurement noise for the four inverse-scattering experiments. Each panel shows the reference contrast followed by reconstructions obtained from noisy far-field data with $\eta=0,0.005,0.01,0.05$. For the three-dimensional experiments, central $xy$ slices are displayed.}
	\label{fig:noise_stability_visualizations}
\end{figure}

The results in Tables~\ref{tab:noise_2d_compact} and~\ref{tab:noise_3d_compact} show that small far-field perturbations lead to only modest changes in the mean reconstruction errors on the tested distributions.  In two dimensions, the infinite-dimensional error changes only slightly at $0.5\%$ and $1\%$ noise and increases from $8.15\%$ to $11.24\%$ at $5\%$ noise; the finite-dimensional error increases from $1.32\%$ to $1.81\%$.  In three dimensions, the infinite-dimensional error increases from $8.01\%$ to $8.13\%$ at $1\%$ noise and to $10.49\%$ at $5\%$ noise, while the finite-dimensional error increases from $6.11\%$ to $6.58\%$. Figure~\ref{fig:noise_stability_visualizations} presents representative reconstructions under the four prescribed noise levels. The two-dimensional panels display full-disk reconstructions at each noise level, while the three-dimensional panels display the central $xy$ slice across noise levels. 

Overall, the numerical experiments illustrate the reconstruction behavior of the proposed architecture for contrasts drawn from both infinite- and finite-dimensional priors in two and three dimensions. Across all four settings, the mean reconstruction errors remain close to their noiseless values at noise levels of $0.5\%$ and $1\%$, whereas a more noticeable increase is observed at $5\%$. These results provide an empirical assessment of the sensitivity of the trained models to the prescribed measurement noise on the tested distributions.

\section{Conclusion}\label{section5}
We developed an operator-level DeepONet framework for logarithmically stable infinite-dimensional inverse problems. By decomposing the learned inverse map into measurement encoding, finite-dimensional neural approximation, and output reconstruction, we obtained a unified error estimate that separates the contributions of these three components.

For inverse maps satisfying a logarithmic stability estimate, we quantified the finite-dimensional ReLU approximation error in terms of the network size for prescribed encoder and reconstruction ranks. For prescribed encoder and reconstruction dimensions, the derived upper bounds exhibit different dependence on the target accuracy: the logarithmic-stability construction yields an exponential network-size bound, whereas the Lipschitz-stability construction yields an algebraic bound up to a logarithmic factor. This difference is consistent with the stronger ill-posedness of the logarithmically stable inverse problem. The encoding and reconstruction contributions are controlled separately by their respective rank-dependent approximation errors.

We then specialized the abstract framework to three-dimensional fixed-frequency inverse acoustic medium scattering and obtained a quantitative DeepONet error bound for recovering the medium contrast from far-field measurements. Numerical experiments in two and three dimensions, using both infinite- and finite-dimensional prior classes, illustrate the reconstruction performance of the proposed architecture and its empirical sensitivity to synthetic measurement noise. The quantitative inverse-scattering analysis established in this work is three-dimensional, whereas the two-dimensional experiments provide numerical evidence for the same computational framework. Future work includes deriving joint complexity estimates for the encoding, neural approximation, and reconstruction levels, and extending the framework to broader classes of inverse problems with non-Lipschitz stability moduli.

\section*{Acknowledgements}
The authors sincerely thank Professor Jenn-Nan Wang of National Taiwan University for his valuable guidance on this work. T. Li was partially supported by the National Natural Science Foundation of China (NSFC) 12371377 and the Jiangsu Provincial Scientific Research Center of Applied Mathematics under Grant No. BK20233002. This research was partially funded by Shanghai Institute for Mathematics and Interdisciplinary Sciences under grant number SIMIS-ID-2024-LG. We thank Tianhe-2 and the Big Data Computing Center in Southeast University, China, for the use of their computing resources.

\appendix

	\section{Borel measurability of the inverse-scattering map}
	This appendix establishes the measurability of the inverse-scattering map used in the qualitative DeepONet formulation.
	\begin{lemma}[Lusin-Souslin theorem]
		Let $X$ and $Y$ be Polish spaces. If $f: X \to Y$ is continuous and injective, then $f(X)$ is a Borel subset of $Y$. Moreover, the inverse mapping $f^{-1}: f(X) \to X$ is Borel measurable.
	\end{lemma}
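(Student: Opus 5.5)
The plan is to prove the stronger statement that $f(B)$ is Borel in $Y$ for every Borel set $B\subset X$. Both assertions of the lemma follow from it: taking $B=X$ shows that $f(X)$ is Borel, and for Borel $B\subset X$ we have $(f^{-1})^{-1}(B)=f(B)$, so $f^{-1}:f(X)\to X$ is Borel measurable. The two tools I would use are standard descriptive set theory:
\begin{itemize}
\item the Lusin separation theorem, which says that two disjoint analytic subsets of a Polish space can be separated by a Borel set, and its countable version for a sequence of pairwise disjoint analytic sets;
\item the fact that every Borel subset $B$ of a Polish space is the image of a closed set $F\subset\mathbb N^{\mathbb N}$ under a continuous bijection $g:F\to B$. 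This comes from refining the topology so that $B$ becomes clopen, hence Polish.
\end{itemize}
Replacing $f|_B$ by $f\circ g$, which is still continuous and injective, it suffices to show that $f(F)$ is Borel when $F\subset\mathbb N^{\mathbb N}$ is closed and $f:F\to Y$ is continuous and injective.

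For finite sequences $s$, set $F_s:=F\cap N_s$, where $N_s$ is the basic cylinder. Each $f(F_s)$ is analytic, as a continuous image of a closed subset of Baire space. By injectivity, the sets $f(F_s)$ with $|s|=n$ are pairwise disjoint. Using countable Lusin separation and induction on the length of $s$, I would construct Borel sets $B_s$ with the following properties:
\begin{itemize}
\item $f(F_s)\subset B_s\subset\overline{f(F_s)}$;
\item $B_s\cap B_t=\emptyset$ for distinct $s,t$ of the same length;
\item $B_{s^\frown i}\subset B_s$.
\end{itemize}
The construction intersects the separating sets with $\overline{f(F_s)}$ and with the parent set $B_s$. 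Each step only involves countably many sets, so the $B_s$ stay Borel.

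Next I would show that $f(F)=\bigcap_{n}\bigcup_{|s|=n}B_s$; the right-hand side is clearly Borel. The inclusion $\subseteq$ is immediate. For $\supseteq$, take $y$ in the right-hand side. Disjointness at each level and the nesting give a unique branch $x\in\mathbb N^{\mathbb N}$ with $y\in B_{x|n}$ for every $n$. Each $F_{x|n}$ is then nonempty, since $B_{x|n}\subset\overline{f(F_{x|n})}$, so $x$ is a limit of points of $F$ and lies in $F$ because $F$ is closed. Continuity of $f$ at $x$, together with the shrinking diameters of $N_{x|n}$, shows that $\operatorname{diam} f(F_{x|n})\to0$ with these sets accumulating at $f(x)$. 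Since $y\in\overline{f(F_{x|n})}$ for every $n$, this forces $y=f(x)\in f(F)$.

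I expect the main obstacle to be the separation theorem itself, if it has to be proved rather than quoted. The standard argument goes by contradiction: if two analytic sets are not Borel-separable, a diagonal argument through their Souslin representations produces a common point. I would cite this as a classical result (Kechris, Thm.~14.7) together with the Borel-as-injective-image fact (Kechris, Thm.~13.1 and~7.9), and concentrate the written proof on the Lusin scheme above.
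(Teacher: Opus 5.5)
Your proof is correct. The paper does not prove this lemma: it quotes it in Appendix A as a classical fact from descriptive set theory and uses it only in Theorem~\ref{borel-measurability}, where $\mathcal Q$ is closed in $C(B)$ and hence Polish.

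What you have written is the standard Lusin-scheme proof (Kechris, Thm.~15.1), and each step is handled correctly:
\begin{itemize}
\item the reduction to a closed $F\subset\mathbb N^{\mathbb N}$ via a continuous bijection onto the Borel set;
\item the inductive construction of the Borel sets $B_s$ from countable Lusin separation, intersected with $\overline{f(F_s)}$ and with the parent set;
\item the identity $f(F)=\bigcap_n\bigcup_{|s|=n}B_s$, where closedness of $F$ puts the branch $x$ in $F$ and continuity of $f$ at $x$ forces $y=f(x)$;
\item the deduction of measurability of $f^{-1}$ from $(f^{-1})^{-1}(A)=f(A)$.
\end{itemize}

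Compared with the paper's bare citation, your route makes the appendix self-contained, modulo the separation theorem and the fact that Borel sets are continuous bijective images of closed subsets of Baire space. It also proves the stronger statement that continuous injective images of Borel sets are Borel. Your reduction through $g$ only uses continuity of $f$ on the Borel set itself. So the same argument would let the closedness assumption on $\mathcal Q$ in Theorem~\ref{borel-measurability} be weakened to $\mathcal Q$ being a Borel subset of $C(B)$.
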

	
	\begin{theorem}\label{borel-measurability}
		Let $\mathcal Q\subset C(B)$ be a closed admissible class of contrasts such that the forward scattering map
		\[
			\Psi:\mathcal Q\to C(\mathbb S^{d-1}\times\mathbb S^{d-1}),
			\qquad \Psi(q)=u_q^\infty,
		\]
		is continuous and injective. Then there exists a Borel measurable map
		\[
			\mathscr{G}:L^2(\mathbb{S}^{d-1}\times\mathbb{S}^{d-1})\to L^2(B)
		\]
		such that $\mathscr{G}(u_q^\infty)=q$ for every $q\in\mathcal Q$.
	\end{theorem}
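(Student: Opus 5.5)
We apply the Lusin--Souslin theorem to $\Psi$ and then extend the resulting Borel inverse by a constant. First, the spaces must be Polish. $C(B)$ with the sup norm is a separable Banach space, since $B$ is a compact metric space. Hence the closed subset $\mathcal Q$ is Polish in the induced metric. Likewise $C(\mathbb S^{d-1}\times\mathbb S^{d-1})$ is a separable Banach space and therefore Polish. Because $\Psi:\mathcal Q\to C(\mathbb S^{d-1}\times\mathbb S^{d-1})$ is continuous and injective, the Lusin--Souslin lemma gives two facts:
\begin{itemize}
\item $D:=\Psi(\mathcal Q)$ is a Borel subset of $C(\mathbb S^{d-1}\times\mathbb S^{d-1})$;
\item $\Psi^{-1}:D\to\mathcal Q$ is Borel measurable with respect to the sup-norm topologies.
\end{itemize}

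Next we transfer these facts to the $L^2$ setting. Let $\iota:C(\mathbb S^{d-1}\times\mathbb S^{d-1})\to L^2(\mathbb S^{d-1}\times\mathbb S^{d-1})$ be the natural inclusion; it is continuous and injective. Again by Lusin--Souslin, $\iota(D)$ is Borel in $L^2$ and $\iota^{-1}:\iota(C)\to C$ is Borel. A simpler route uses the fact that a continuous injection between Polish spaces maps Borel sets to Borel sets. This applies to $\iota$ restricted to the Borel set $D$, which is standard Borel. Hence $\iota(D)$ is Borel in $X=L^2$, and $(\iota|_D)^{-1}$ is Borel measurable. Similarly, the inclusion $j:C(B)\to L^2(B)$ is continuous, hence Borel.

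Now define
\[
\mathscr G(f):=\begin{cases} j\bigl(\Psi^{-1}((\iota|_D)^{-1}f)\bigr), & f\in\iota(D),\\ 0, & f\in L^2\setminus\iota(D).\end{cases}
\]
On the Borel set $\iota(D)$, this map is a composition of Borel maps; on its Borel complement, it is constant. So $\mathscr G$ is Borel measurable on $L^2(\mathbb S^{d-1}\times\mathbb S^{d-1})$. For $q\in\mathcal Q$ we have $\mathscr G(u_q^\infty)=q$ by construction.

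The main point to watch is measurability of the sets and maps after passing from the sup-norm topology to the $L^2$ topology. Borel sets in $C$ need not be evidently Borel in $L^2$. This is why we invoke Lusin--Souslin a second time, for the injective continuous inclusion $\iota$ restricted to the standard Borel set $D$; this makes the domain-splitting step rigorous.
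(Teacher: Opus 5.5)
Your proof is correct and follows the paper's approach: Lusin--Souslin to obtain a Borel image with a Borel inverse, then extension by zero off that image. The only difference is that the paper applies Lusin--Souslin once, directly to the continuous injection $j\circ\Psi:\mathcal Q\to L^2(\mathbb S^{d-1}\times\mathbb S^{d-1})$ from the Polish space $\mathcal Q$, so your second application to the inclusion $\iota$ is unnecessary.
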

	
	\begin{proof}
		Set $Z:=C(\mathbb{S}^{d-1}\times\mathbb{S}^{d-1})$ and $X:=L^2(\mathbb{S}^{d-1}\times\mathbb{S}^{d-1})$, and let $j:Z\to X$ be the canonical inclusion. Consider the forward scattering operator $\Psi:\mathcal Q\to Z$, $\Psi(q)=u_q^\infty$. By the assumptions of the theorem,
		$\Psi:\mathcal Q\to Z$ is continuous and injective. Hence
		$\widetilde\Psi:=j\circ\Psi:\mathcal Q\to X$ is also continuous and injective.
		
		Since $\mathcal Q$ is closed in $C(B)$, it is a Polish space. Since $X$ is also Polish, the Lusin--Souslin theorem shows that $D:=\widetilde\Psi(\mathcal Q)$ is Borel in $X$ and that $\widetilde\Psi^{-1}:D\to\mathcal Q$ is Borel measurable. Let $i:\mathcal Q\to L^2(B)$ be the canonical inclusion and define
		\[
			\mathscr{G}(f):=
			\begin{cases}
				i\bigl(\widetilde\Psi^{-1}(f)\bigr), & f\in D,\\
				0, & f\in X\setminus D.
			\end{cases}
		\]
		Because $D$ is Borel and $i\circ\widetilde\Psi^{-1}$ is Borel measurable, so is $\mathscr{G}$. By construction, $\mathscr{G}(u_q^\infty)=q$ for every $q\in\mathcal Q$.
	\end{proof}
	
	\section{Proof of the qualitative approximation theorem}
	\label{app:qualitative-approximation}
	We collect here the auxiliary results used in the proof of Theorem~\ref{thm:deeponet-inverse-scattering-approx}.
	\begin{lemma}[Lusin's theorem]
		Let $ X, Y $ be separable and complete metric spaces. Let $\mu \in \mathcal{P}(X)$ be a probability measure, and let $\mathscr{G} : X \to Y$ be a Borel measurable mapping. Then for any $\varepsilon > 0$, there exists a compact set $K \subset X$, such that $\mu(X \setminus K) < \varepsilon$, and such that the restriction $\mathscr{G}|_{K} : K \to Y$ is continuous.
	\end{lemma}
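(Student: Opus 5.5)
The plan is the classical proof of Lusin's theorem, split into two stages. First I show that $\mu$ is a tight Radon measure on the Polish space $X$. Then I use a countable base of $Y$ to carve out a compact set on which preimages of basic open sets become relatively clopen.

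\emph{Step 1 (closed regularity).} Let $\mathcal M$ be the family of Borel sets $A\subset X$ such that, for every $\delta>0$, there are a closed $F$ and an open $U$ with $F\subset A\subset U$ and $\mu(U\setminus F)<\delta$.
\begin{itemize}
\item $\mathcal M$ contains every open set. In a metric space an open $U$ is an increasing union of the closed sets $\{x:\operatorname{dist}(x,X\setminus U)\ge 1/k\}$, and continuity of $\mu$ from below gives the closed inner approximation.
\item $\mathcal M$ is closed under complements, since we may swap the roles of $F$ and $U$.
\item $\mathcal M$ is closed under countable unions. Take a $\delta2^{-n}$-approximation $F_n\subset A_n\subset U_n$ for each $n$, put $U=\bigcup_n U_n$, and use the finite union $F=\bigcup_{n\le N}F_n$ with $N$ large.
\end{itemize}
Hence $\mathcal M$ is a $\sigma$-algebra containing the open sets, so it is the whole Borel $\sigma$-algebra.

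\emph{Step 2 (tightness, Ulam).} Fix a countable dense set in $X$. For each $k$, the balls of radius $1/k$ around it cover $X$, so some finite union $B_k$ of their closures has $\mu(X\setminus B_k)<\delta 2^{-k}$. The set $K_0:=\bigcap_k B_k$ is closed and totally bounded, so it is compact because $X$ is complete, and $\mu(X\setminus K_0)<\delta$. Intersecting the closed inner approximations of Step 1 with $K_0$ shows that every Borel set $A$ contains compact sets $C\subset A$ with $\mu(A\setminus C)$ arbitrarily small.

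\emph{Step 3 (construction of $K$).} Let $\{V_n\}_{n\ge1}$ be a countable base of the separable metric space $Y$. Since $\mathscr G$ is Borel, each $A_n:=\mathscr G^{-1}(V_n)$ is Borel. By Step 2 choose:
\begin{itemize}
\item compact $C_n\subset A_n$ and compact $C_n'\subset X\setminus A_n$ with $\mu(A_n\setminus C_n)+\mu\bigl((X\setminus A_n)\setminus C_n'\bigr)<\varepsilon 2^{-n-2}$;
\item a compact $K_0$ with $\mu(X\setminus K_0)<\varepsilon/4$.
\end{itemize}
Set $K:=K_0\cap\bigcap_n (C_n\cup C_n')$. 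Then $K$ is compact, being closed in the compact set $K_0$. Moreover $X\setminus(C_n\cup C_n')\subset (A_n\setminus C_n)\cup\bigl((X\setminus A_n)\setminus C_n'\bigr)$, so
\[
\mu(X\setminus K)\le \varepsilon/4+\sum_n\varepsilon2^{-n-2}=\varepsilon/2<\varepsilon .
\]

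\emph{Step 4 (continuity).} On $K$ we have $\mathscr G|_K^{-1}(V_n)=A_n\cap K=C_n\cap K$, because $K\subset C_n\cup C_n'$ and $C_n'$ is disjoint from $A_n$. Likewise $K\setminus A_n=C_n'\cap K$. Both sets are relatively closed in $K$, and they are complementary in $K$, so $A_n\cap K$ is relatively open. Every open subset of $Y$ is a union of basic sets $V_n$, so preimages of open sets under $\mathscr G|_K$ are relatively open, and $\mathscr G|_K$ is continuous.

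The only step needing real care is Step 2, inner regularity by compact sets. This is exactly where completeness and separability of $X$ enter, via the totally bounded closed set $K_0$. Note that completeness of $Y$ is not needed; separability of $Y$ is used only to obtain the countable base.
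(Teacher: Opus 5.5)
Your proof is correct and complete. The paper gives no proof of this lemma. It is recorded in the appendix as a classical auxiliary fact and only invoked, with $X=C(\mathbb S^{d-1}\times\mathbb S^{d-1})$ and $Y=L^2(B)$, in the proof of Theorem~\ref{thm:deeponet-inverse-scattering-approx}. So there is no argument in the paper to compare yours against.

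What you supply is the standard textbook route, in three stages:
\begin{itemize}
\item closed regularity of a finite Borel measure on a metric space, via the good-sets $\sigma$-algebra argument;
\item Ulam's tightness theorem, via the closed totally bounded set $K_0=\bigcap_k B_k$;
\item a countable base $\{V_n\}$ of $Y$, used to make each $\mathscr G^{-1}(V_n)\cap K$ relatively clopen in $K$ by trapping $K$ inside $C_n\cup C_n'$.
\end{itemize}
Each step checks out. The equality $\mathscr G^{-1}(V_n)\cap K=C_n\cap K$ is the key point, and it holds because $C_n'\cap\mathscr G^{-1}(V_n)=\emptyset$. Your measure bound $\mu(X\setminus K)\le\varepsilon/2$ is also correct.

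Compared with simply citing the result, your version makes explicit where each hypothesis enters:
\begin{itemize}
\item completeness and separability of $X$ are used only for tightness;
\item separability of $Y$ is used only to get a countable base;
\item completeness of $Y$ is not used at all.
\end{itemize}
This is consistent with the paper's application, where the requirement that the relevant domain be Polish is met by $\bigl(C(\mathbb S^{d-1}\times\mathbb S^{d-1}),\|\cdot\|_\infty\bigr)$.
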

	
	\begin{lemma}[Clipping lemma \cite{LMK22}]
		Let $\varepsilon > 0$, and fix $0 < R_1 < R_2$. There exists a ReLU neural network $\gamma : \mathbb{R}^p \to \mathbb{R}^p$, such that
		\begin{equation*}
			\begin{cases}
				\|\gamma(x) - x\|_{\ell^2} < \varepsilon, & \text{if } \|x\|_{\ell^2} \leq R_1, \\
				\|\gamma(x)\|_{\ell^2} \leq R_2, & \forall x \in \mathbb{R}^p.
			\end{cases}
		\end{equation*}
	\end{lemma}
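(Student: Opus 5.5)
The clipping lemma is the statement to prove: given $\varepsilon>0$ and radii $0<R_1<R_2$, build a ReLU network $\gamma:\mathbb R^p\to\mathbb R^p$ that is $\varepsilon$-close to the identity on the ball of radius $R_1$ and globally bounded by $R_2$. The plan is to take a continuous target map $\Gamma:\mathbb R^p\to\mathbb R^p$ with $\Gamma(x)=x$ on $\|x\|_{\ell^2}\le R_1$ and $\|\Gamma(x)\|_{\ell^2}\le R'$ everywhere, where $R_1<R'<R_2$. We then approximate $\Gamma$ by a ReLU network and post-process so that the global bound survives the approximation error.

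The difficulty is that the radial projection $x\mapsto x\min\{1,R'/\|x\|\}$ is not piecewise linear, so it cannot be realized exactly by ReLU. We instead work coordinatewise. Set $r:=R_2/\sqrt p$. If $R_1< r$, the coordinatewise clip
\begin{equation*}
	t\mapsto \sigma(t+r)-\sigma(t-r)-r
\end{equation*}
does everything exactly. It is the identity on $[-r,r]$ and bounded by $r$ in absolute value. Applied to each coordinate, it gives $\gamma(x)=x$ whenever $\|x\|_\infty\le R_1$, and $\|\gamma(x)\|_{\ell^2}\le\sqrt p\,r=R_2$ for all $x$. In this regime the construction is exact, with depth $1$ and size $O(p)$.

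\textbf{Main obstacle.} The general case $R_1\ge R_2/\sqrt p$ needs a genuinely $\ell^2$ construction. We use the radial profile
\begin{equation*}
	\Gamma(x):=x\,\phi(\|x\|_{\ell^2}^2),
\end{equation*}
where $\phi$ is a continuous piecewise-linear function with $\phi\equiv1$ on $[0,R_1^2]$, $0\le\phi\le1$, and $\sqrt{s}\,\phi(s)\le R'$ for all $s$. For instance, take $\phi(s)=1$ for $s\le R_1^2$, then interpolate linearly down to $0$ at $s=R'^2$, and set $\phi=0$ beyond; the bound $\sqrt s\,\phi(s)\le R'$ holds because $\sqrt s\le R'$ wherever $\phi>0$. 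The squared norm $\|x\|^2$ and the products $x_i\phi$ are approximated on a large box by Yarotsky's multiplication networks \cite{Yar17}, the same tool used in Lemma~\ref{lem:PC-J}.

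These networks are accurate only on bounded sets, so the key step is to control large inputs.
\begin{itemize}
	\item First apply a coordinatewise clip at a level $T\ge R'$. This leaves the set $\{\|x\|\le R'\}$ unchanged and maps everything into $[-T,T]^p$.
	\item On that compact box, run the approximate $\Gamma$ with uniform accuracy $\delta$.
	\item Since $\Gamma$ is bounded by $R'$ in $\ell^2$ on $[-T,T]^p$, the approximation is bounded by $R'+\delta$.
	\item Choose $\delta<\min\{\varepsilon,R_2-R'\}$.
\end{itemize}
On $\|x\|\le R_1$ the pre-clip acts as the identity and $\Gamma(x)=x$, so the error there is below $\varepsilon$. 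Everywhere, the norm is at most $R'+\delta<R_2$.

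A simpler alternative also works, since the lemma asks only for an inequality. Take any ReLU network $\tilde\gamma$ that is $\varepsilon$-close to the identity on the ball, and compose it with the exact coordinatewise clip at level $R_2/\sqrt p$. This preserves the global bound. However, when $R_1>R_2/\sqrt p$ the clip can move points of the ball, so the exact regime above handles only that case; the radial construction remains the general route. The case split between the two regimes, together with the Yarotsky error bookkeeping, is where the real care is needed.
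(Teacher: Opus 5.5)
Your proof is correct. The paper does not prove this lemma itself; it quotes it from \cite{LMK22} and uses it only qualitatively in Appendix~\ref{app:qualitative-approximation}. Your general construction is the standard argument. The coordinatewise clip at level $T\ge R'$ maps $\mathbb R^p$ into the compact cube $[-T,T]^p$ and fixes every $x$ with $\|x\|_{\ell^2}\le R'$, because $|x_i|\le\|x\|_{\ell^2}$. Composing it with a network that approximates $\Gamma$ uniformly on that cube, to $\ell^2$-accuracy $\delta<\min\{\varepsilon,R_2-R'\}$, gives both required properties at once. The composition is again a network of the form \eqref{eq:neural-network}, since the two adjacent affine layers merge.

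Two parts of your write-up are heavier than necessary.

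\emph{The case split.} It is not needed: the radial construction works for every $0<R_1<R_2$ and every $p$. The regime $R_1<R_2/\sqrt p$ only yields a cheaper exact network, so your closing claim that the split is where ``the real care is needed'' overstates it.

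\emph{The radial retraction and Yarotsky networks.} The fact that $x\mapsto x\min\{1,R'/\|x\|_{\ell^2}\}$ is not piecewise linear is no obstacle, since your own $\Gamma$ is not piecewise linear either and is only approximated. You may take $T=R_1$ and $\Gamma(x)=x\min\{1,R_1/\|x\|_{\ell^2}\}$ directly, and invoke uniform ReLU approximation of continuous maps on compact sets with $\delta<\min\{\varepsilon,R_2-R_1\}$. This dispenses with both $R'$ and Yarotsky's multiplication networks.

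What your explicit $\phi(\|x\|_{\ell^2}^2)$ route buys is a quantitative size and depth bound, logarithmic in $1/\delta$, which the lemma does not ask for. If you keep it, record the bookkeeping:
\begin{itemize}
\item Extend $\phi\equiv1$ on $(-\infty,R_1^2]$, so that a slightly negative approximate squared norm is harmless.
\item Errors $\delta_1$ in $\|x\|_{\ell^2}^2$ and $\delta_2$ in the products give a coordinatewise error at most $T\,\Lip(\phi)\,\delta_1+\delta_2$, hence an $\ell^2$ error at most $\sqrt p\,(T\,\Lip(\phi)\,\delta_1+\delta_2)$.
\end{itemize}
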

	
	\begin{lemma}[Universal approximation theorem for operators \cite{CC95}]
		Suppose that $\sigma$ is a continuous non-polynomial function, $X$ is a real Banach space, $K_1 \subset X$, $K_2 \subset \mathbb{R}^d$ are two compact sets in $X$ and $\mathbb{R}^d$, respectively, $V$ is a compact set in $C(K_1;\mathbb R)$, $\mathscr{G}$ is a nonlinear continuous operator, which maps $V$ into $C(K_2;\mathbb R)$, then for any $\varepsilon > 0$, there are positive integers $n, p, m$, constants $c_i^k, \xi_{ij}^k, \theta_i^k, \zeta_k \in \mathbb{R}$, $w_k \in \mathbb{R}^d$, $x_j \in K_1$, $i = 1, \ldots, n$, $k = 1, \ldots, p$, $j = 1, \ldots, m$, such that
		
		\begin{equation*}
			\left| \mathscr{G}(u)(y) - \sum_{k=1}^{p} \sum_{i=1}^{n} c_i^k \sigma \left( \sum_{j=1}^{m} \xi_{ij}^k u(x_j) + \theta_i^k \right) \sigma(w_k \cdot y + \zeta_k) \right| < \varepsilon
		\end{equation*}
		
		holds for all $u \in V$ and $y \in K_2$.
	\end{lemma}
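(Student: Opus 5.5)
The plan follows the classical two-stage argument of Chen and Chen. First I approximate the \emph{output} functions uniformly in $y$ by a fixed finite family of ridge functions $\sigma(w_k\cdot y+\zeta_k)$, with coefficients that are continuous functionals of the input $u$. Then I approximate each of these coefficient functionals, which are continuous on the compact set $V$, by one-hidden-layer networks acting on finitely many point samples $u(x_j)$. The error is split as $\varepsilon/2+\varepsilon/2$ and the constants are chosen in that order.

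\textbf{Stage 1 (trunk side).}
\begin{itemize}
\item Since $\mathscr G$ is continuous and $V$ is compact, $U:=\mathscr G(V)\subset C(K_2;\mathbb R)$ is compact.
\item By Arzel\`a--Ascoli, $U$ is uniformly bounded and equicontinuous. Hence there is a finite $\delta$-net $\{y_1,\dots,y_N\}\subset K_2$ and a continuous partition of unity $\{\varphi_\ell\}$ on $K_2$, subordinate to $\delta$-balls, such that
\[
\Bigl|g(y)-\sum_{\ell}g(y_\ell)\varphi_\ell(y)\Bigr|<\varepsilon/8
\]
for all $g\in U$ and $y\in K_2$.
\item By the one-variable-vector universal approximation theorem for continuous non-polynomial $\sigma$ (Leshno--Lin--Pinkus), each $\varphi_\ell$ is uniformly approximated on $K_2$ by a finite sum of ridge functions. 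Concatenating these finitely many sums gives one common family $\{\sigma(w_k\cdot y+\zeta_k)\}_{k=1}^p$ and scalars $a_{\ell k}$ with
\[
\Bigl|\varphi_\ell(y)-\sum_k a_{\ell k}\sigma(w_k\cdot y+\zeta_k)\Bigr|<\eta.
\]
\item Choosing $\eta$ small relative to $N\sup_{g\in U}\|g\|_\infty$ yields
\[
\Bigl|\mathscr G(u)(y)-\sum_k F_k(u)\,\sigma(w_k\cdot y+\zeta_k)\Bigr|<\varepsilon/4,
\qquad F_k(u):=\sum_\ell a_{\ell k}\,\mathscr G(u)(y_\ell).
\]
Each $F_k$ is a continuous real functional on $V$.
\end{itemize}

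\textbf{Stage 2 (branch side).} I approximate each $F_k$ by $\sum_i c_i^k\sigma(\sum_j\xi_{ij}^k u(x_j)+\theta_i^k)$ using finitely many samples $x_1,\dots,x_m\in K_1$.
\begin{itemize}
\item $V\subset C(K_1)$ is compact, hence equicontinuous. So there are points $x_j$ and a partition of unity $\{\psi_j\}$ on $K_1$ such that the interpolation operator $I_m u:=\sum_j u(x_j)\psi_j$ satisfies $\sup_{u\in V}\|u-I_mu\|_\infty<\rho$.
\item To evaluate $F_k$ at $I_mu$, which need not lie in $V$, extend $\mathscr G$ continuously to $C(K_1)$ via Dugundji (Lemma~\ref{lem:dugundji}). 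Here $C(K_1)$ is a metric space, $V$ is closed, and $C(K_2)$ is locally convex.
\item The set $W:=V\cup I_m(V)$ is compact, because $I_m$ is continuous. So the extended $F_k$ is uniformly continuous on $W$, and $|F_k(u)-F_k(I_mu)|$ is uniformly small once $\rho$ is small.
\item The map $\mathbf u=(u(x_1),\dots,u(x_m))\mapsto F_k\bigl(\sum_j u_j\psi_j\bigr)$ is continuous on the compact set $\{(u(x_j))_j:u\in V\}\subset\mathbb R^m$. A second application of the Leshno--Lin--Pinkus theorem gives the one-hidden-layer approximation.
\end{itemize}

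\textbf{Combining.} I use a common hidden width $n$ for all $k$ by padding with zero coefficients. The final error is bounded by
\[
\varepsilon/4+\sum_k\bigl|F_k(u)-\text{net}_k(u)\bigr|\cdot\sup_{y\in K_2}\bigl|\sigma(w_k\cdot y+\zeta_k)\bigr|.
\]
Since the ridge functions are bounded on the compact set $K_2$, choosing the Stage 2 accuracies of order $\varepsilon/(4p\max_k\|\sigma(w_k\cdot+\zeta_k)\|_\infty)$ makes this $<\varepsilon$.

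The main obstacle is Stage 2's passage from the functional on $V$ to a function of finitely many samples. Continuity of $F_k$ is only known on $V$, while the interpolants leave $V$, so this is exactly where the Dugundji extension and the compactness of $V\cup I_m(V)$ are needed. The dependence of $m$, $n$, $p$ on $\varepsilon$ must also be fixed in the order trunk first, then branch.
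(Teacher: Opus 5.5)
Your proposal is correct and is essentially the classical two-stage Chen--Chen argument, which the paper does not reproduce but simply cites from \cite{CC95}: a partition of unity on $K_2$ plus ridge-function approximation with continuous coefficient functionals $F_k$; then interpolation $I_m$ on $K_1$; then an extension of the functional off $V$ (your Dugundji step plays the role of their Tietze extension); and finally a one-hidden-layer approximation in $\mathbb R^m$, where Leshno--Lin--Pinkus--Schocken fits the ``continuous non-polynomial'' hypothesis more directly than their Tauber--Wiener condition. One step needs rephrasing: compactness of $W=V\cup I_m(V)$ gives a modulus of continuity that depends on $m$, so it cannot be used to choose $\rho$. Instead, use that the extended $F_k$ is uniformly continuous \emph{near} the compact set $V$ (if $u_n\in V$, $\|u_n-v_n\|_\infty\to0$ and $|F_k(u_n)-F_k(v_n)|\ge\epsilon_0$, pass to a subsequence $u_n\to u$ and contradict continuity at $u$), or work on the fixed compact set $V\cup\bigcup_{n}I_{m_n}(V)$ for a sequence $\rho_n\to0$.
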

	
	\begin{proof}[Proof of Theorem~\ref{thm:deeponet-inverse-scattering-approx}]
		For complex-valued inputs and outputs, we apply the preceding real-valued result after identifying the real and imaginary input channels with functions on two disjoint copies of the input domain and treating the real and imaginary output components separately; the resulting real coordinates are concatenated. All bases and projections below are understood in the corresponding realified spaces, while the original notation is retained.
		\begin{enumerate}
			\item \textbf{Clipping $\mathscr{G}$:}  
			For $M > 0$, define the clipped operator $\mathscr{G}_M: X \to Y$ by
			\begin{equation*}
				\mathscr{G}_M(f) = 
				\begin{cases}
					\mathscr{G}(f) & \text{if } \|\mathscr{G}(f)\|_{L^2(B)} \leq M, \\
					M \dfrac{\mathscr{G}(f)}{\|\mathscr{G}(f)\|_{L^2(B)}} & \text{if } \|\mathscr{G}(f)\|_{L^2(B)} > M.
				\end{cases}
			\end{equation*}
			By construction, $\|\mathscr G_M(f)\|_{L^2(B)}\le M$ for every
			$f\in X$. Moreover,
			\begin{equation*}
				\| \mathscr{G} - \mathscr{N} \|_{L^2(\mu)} \leq \| \mathscr{G} - \mathscr{G}_M \|_{L^2(\mu)} + \| \mathscr{G}_M - \mathscr{N} \|_{L^2(\mu)},
			\end{equation*}
			and by the dominated convergence theorem, $\|\mathscr{G} - \mathscr{G}_M\|_{L^2(\mu)} \to 0$ as $M \to \infty$. Choose $M > \varepsilon$ such that
			\begin{equation}
				\| \mathscr{G} - \mathscr{G}_M \|_{L^2(\mu)} < \frac{\varepsilon}{3}. \label{eq:2.1}
			\end{equation}
			
			\item \textbf{Lusin's theorem:}  
			Since $C(\mathbb{S}^{d-1} \times \mathbb{S}^{d-1})$ and $Y$ are Polish spaces (separable complete metric spaces) and $\mathscr{G}_M$ is Borel measurable, by Lusin's theorem, there exists a compact set $K \subset C(\mathbb{S}^{d-1} \times \mathbb{S}^{d-1})$ such that $\mu(C(\mathbb{S}^{d-1} \times \mathbb{S}^{d-1}) \setminus K) < (\varepsilon/(9M))^2$ and the restriction $\mathscr{G}_M|_K: K \to Y$ is continuous.
			
			Since $D\subset C(\mathbb{S}^{d-1} \times \mathbb{S}^{d-1})$ and $\mu(D)=1$, we have $\mu(X \setminus C(\mathbb{S}^{d-1} \times \mathbb{S}^{d-1}))=0$. Therefore, $\mu(X \setminus K)=\mu(C(\mathbb{S}^{d-1} \times \mathbb{S}^{d-1}) \setminus K)<(\varepsilon/(9M))^2.$
			Moreover, $K$ is also compact in $X$.
			
			\item \textbf{Finite-dimensional projection:}  
			Let $\{\phi_1, \phi_2, \dots\}$ be an orthonormal basis of $Y = L^2(B)$ consisting of continuous functions (such a basis exists since $B$ is compact). For $\kappa \in \mathbb{N}$, define the projection $P_\kappa: Y \to C(B)$ by
			\begin{equation*}
				P_\kappa(v) = \sum_{k=1}^\kappa \langle v, \phi_k \rangle \phi_k.
			\end{equation*}
			Then $P_\kappa$ is continuous for any fixed $\kappa$. Let $K' = \mathscr{G}_M(K) \subset Y$, which is compact due to the continuity of $\mathscr{G}_M|_K$ and compactness of $K$. Thus, there exists $\kappa \in \mathbb{N}$ such that
			\begin{equation}
				\max_{v \in K'} \| v - P_\kappa(v) \|_{L^2(B)} < \frac{\varepsilon}{6}. \label{eq:2.2}
			\end{equation}
			Define $\mathcal{F} := P_\kappa \circ \mathscr{G}_M: K \subset C(\mathbb{S}^{d-1} \times \mathbb{S}^{d-1}) \to C(B)$. Then $\mathcal{F}$ is continuous, and by \eqref{eq:2.2},
			\begin{equation}
				\begin{aligned}
					\max_{f \in K} \| \mathscr{G}_M(f) - \mathcal{F}(f) \|_{L^2(B)} 
					&= \max_{f \in K} \| \mathscr{G}_M(f) - P_\kappa \circ \mathscr{G}_M(f) \|_{L^2(B)} \\ 
					&= \max_{v \in K'} \| v - P_\kappa(v) \|_{L^2(B)} < \frac{\varepsilon}{6}.
				\end{aligned} \label{eq:2.3}
			\end{equation}
			
			\item \textbf{Universal approximation on a compact set:}  
			By the universal approximation theorem for continuous operators on compact sets, applied to $\mathcal{F}: K \subset C(\mathbb{S}^{d-1} \times \mathbb{S}^{d-1}) \to C(B)$, there exists an operator network $\widetilde{\mathscr{N}}$ with a single hidden layer in the approximator network and a single hidden layer in the trunk network (and with $\tau_0 \equiv 0$), such that
			\begin{equation}
				\sup_{f \in K} \| \mathcal{F}(f) - \widetilde{\mathscr{N}}(f) \|_{L^2(B)} < \frac{\varepsilon}{12}. \label{eq:2.4}
			\end{equation}
			After deleting zero and linearly dependent trunk modes and combining the associated branch coefficients, we apply Gram--Schmidt to the remaining trunks and the corresponding inverse-transpose change of basis to the branch outputs. Relabeling the remaining number as $p$, we can write
			\begin{equation*}
				\widetilde{\mathscr{N}}(f)(y)
				=
				\sum_{k=1}^p \beta_k(f)\tau_k(y),
			\end{equation*}
			where $\beta(f)=\mathcal A(\mathcal E(f))$ and
			$\{\tau_1,\dots,\tau_p\}\subset L^2(B)$ is an orthonormal trunk family. In particular, we have
			\begin{equation*}
				\|\widetilde{\mathscr{N}}(f)\|_{L^2} = \|\beta(f)\|_{\ell^2}, \quad \forall f \in C(\mathbb{S}^{d-1} \times \mathbb{S}^{d-1}).
			\end{equation*}
			For all $f \in K$,
			\begin{equation*}
				\begin{aligned}
					\|\beta(f)\|_{\ell^2} 
					&\leq \|\mathscr{G}_M(f)\|_{L^2(B)} + \|\mathscr{G}_M(f) - \mathcal{F}(f)\|_{L^2(B)} + \|\mathcal{F}(f) - \widetilde{\mathscr{N}}(f)\|_{L^2(B)} \\ 
					&< M + \frac{\varepsilon}{6} + \frac{\varepsilon}{12} < 2M.
				\end{aligned}
			\end{equation*}
			
			\item \textbf{Clipping the DeepONet:}  
			Let $R_1 = M + \varepsilon/4$ and $R_2 = 2M$. By the clipping lemma, there exists a ReLU neural network $\gamma: \mathbb{R}^p \to \mathbb{R}^p$ such that:
			\begin{itemize}
				\item If $\|x\|_{\ell^2} \leq R_1$, then $\|\gamma(x) - x\|_{\ell^2} < \varepsilon/12$,
				\item For all $x \in \mathbb{R}^p$, $\|\gamma(x)\|_{\ell^2} \leq R_2 = 2M$.
			\end{itemize}
			Define the clipped DeepONet $\mathscr{N} = \mathcal{R} \circ \gamma \circ \mathcal{A} \circ \mathcal{E}: C(\mathbb{S}^{d-1} \times \mathbb{S}^{d-1}) \to L^2(B)$ by
			\begin{equation*}
				\mathscr{N}(f)(y) := \sum_{k=1}^p \gamma_k(\beta(f)) \tau_k(y).
			\end{equation*}
			Then for $f \in K$, since $\|\beta(f)\|_{\ell^2} \leq R_1$, we have
			\begin{equation}
				\max_{f \in K} \| \mathscr{N}(f) - \widetilde{\mathscr{N}}(f) \|_{L^2} = \max_{f \in K} \| \gamma(\beta(f)) - \beta(f) \|_{\ell^2} < \frac{\varepsilon}{12}. \label{eq:2.5}
			\end{equation}
			Combining \eqref{eq:2.4} and \eqref{eq:2.5},
			\begin{equation}
				\begin{aligned}
					\max_{f \in K} \| \mathscr{N}(f) - \mathcal{F}(f) \|_{L^2(B)} 
					&\leq \max_{f \in K} \| \mathscr{N}(f) - \widetilde{\mathscr{N}}(f) \|_{L^2(B)} + \max_{f \in K} \| \widetilde{\mathscr{N}}(f) - \mathcal{F}(f) \|_{L^2(B)} \\ 
					&< \frac{\varepsilon}{12} + \frac{\varepsilon}{12} = \frac{\varepsilon}{6}.
				\end{aligned} \label{eq:2.6}
			\end{equation}

			\item \textbf{Error estimation:}  
			We now estimate the error over $X$:
			\begin{equation*}
				\begin{aligned}
					\| \mathscr{G}_M - \mathscr{N} \|_{L^2(\mu)} 
					&\leq \| \mathscr{G}_M - \mathscr{N} \|_{L^2(\mu;K)} + \| \mathscr{G}_M \|_{L^2(\mu;X \setminus K)} \\ 
					&\quad + \| \mathscr{N} \|_{L^2(\mu;X \setminus K)}.
				\end{aligned}
			\end{equation*}
			From \eqref{eq:2.3} and \eqref{eq:2.6}, on $K$,
			\begin{equation*}
				\begin{aligned}
					\| \mathscr{G}_M - \mathscr{N} \|_{L^2(\mu;K)} 
					&\leq \max_{f \in K} \left\{ \| \mathscr{G}_M(f) - \mathcal{F}(f) \|_{L^2(B)} + \| \mathcal{F}(f) - \mathscr{N}(f) \|_{L^2(B)} \right\} \\ 
					&< \frac{\varepsilon}{6} + \frac{\varepsilon}{6} = \frac{\varepsilon}{3}.
				\end{aligned} 
			\end{equation*}

			Since $\|\mathscr{G}_M(f)\|_{L^2(B)} \leq M$ and $\mu(X \setminus K) < (\varepsilon/(9M))^2$,
			\begin{equation*}
				\| \mathscr{G}_M \|_{L^2(\mu;X \setminus K)} \leq M \cdot \mu(X \setminus K)^{1/2} < M \cdot \frac{\varepsilon}{9M} = \frac{\varepsilon}{9}.
			\end{equation*}
			Since $\|\mathscr{N}(f)\|_{L^2} = \|\gamma(\beta(f))\|_{\ell^2} \leq 2M$ for every $f\in C(\mathbb{S}^{d-1}\times\mathbb{S}^{d-1})$,
			\begin{equation*}
				\| \mathscr{N} \|_{L^2(\mu;X \setminus K)} \leq 2M \cdot \mu(X \setminus K)^{1/2} < 2M \cdot \frac{\varepsilon}{9M} = \frac{2\varepsilon}{9}.
			\end{equation*}
			Thus,
			\begin{equation}
				\| \mathscr{G}_M - \mathscr{N} \|_{L^2(\mu)} < \frac{\varepsilon}{3} + \frac{\varepsilon}{9} + \frac{2\varepsilon}{9} = \frac{2\varepsilon}{3}. \label{eq:2.7}
			\end{equation}
			
			Combining \eqref{eq:2.1} and \eqref{eq:2.7},
			\begin{equation*}
				\| \mathscr{G} - \mathscr{N} \|_{L^2(\mu)} \leq \| \mathscr{G} - \mathscr{G}_M \|_{L^2(\mu)} + \| \mathscr{G}_M - \mathscr{N} \|_{L^2(\mu)} < \frac{\varepsilon}{3} + \frac{2\varepsilon}{3} = \varepsilon.
			\end{equation*}
		\end{enumerate}
		This completes the proof of Theorem \ref{thm:deeponet-inverse-scattering-approx}.
	\end{proof}

\end{document}